\theoremstyle{plain}
\newtheorem{thm}{Theorem}
\newtheorem{prop}{Proposition}[section]
\newtheorem{lem}[prop]{Lemma}
\newtheorem{cor}[prop]{Corollary}
\theoremstyle{definition}
\newtheorem{defi}[prop]{Definition}
\newtheorem{rmk}[prop]{Remark}
\def\be{\begin{equation}}
\def\ee{\end{equation}}
\newcommand {\R} {\mathbb{R}} \newcommand {\Z} {\mathbb{Z}}
 \newcommand {\N} {\mathbb{N}}
 \newcommand {\Sf} {\mathbb{S}^1}
\newcommand {\dH} {d\mathcal{H}}
\newcommand {\p} {\partial}
\newcommand{\argmin}{\operatorname{argmin}}
\newcommand{\mres}{\mathbin{\vrule height 1.6ex depth 0pt width
0.13ex\vrule height 0.13ex depth 0pt width 1.3ex}}
\DeclareMathOperator{\vol}{vol}
\DeclareMathOperator{\di}{div}
\DeclareMathOperator{\grad}{grad}
\title{Uniformly compressing mean curvature flow}
\author[W.~Shi]{Wenhui Shi}
\address[W.~Shi]{CMUC, Department of
Mathematics, University of Coimbra, 3001-501 Coimbra, Portugal}{}
\email{wshi@mat.uc.pt}
\author[D.~Vorotnikov]{Dmitry Vorotnikov}
\address[D.~Vorotnikov]{CMUC, Department of
Mathematics, University of Coimbra, 3001-501 Coimbra, Portugal}{}
\email{mitvorot@mat.uc.pt}
\begin{document}
\begin{abstract} Michor and Mumford showed that the mean curvature flow is a gradient flow on a  Riemannian structure with a degenerate geodesic distance. It is also known to destroy the uniform density of gridpoints on the evolving surfaces. We introduce a related geometric flow which is free of these drawbacks. Our flow can be viewed as a formal gradient flow on a certain submanifold of the Wasserstein space of probability measures endowed with Otto’s Riemannian structure. We obtain a number of analytic results concerning well-posedness and long-time stability which are however restricted to the 1D case of evolution of loops.
\end{abstract}
\maketitle

Keywords: evolving surface, volume, gradient flow, optimal transport, infinite-dimensional Riemannian manifold

\vspace{10pt}

\textbf{MSC [2010]: 35A01, 35A02, 53C44, 58E99}

\section{Introduction}

The mean curvature flow \cite{Sm12,Cold}
is a geometric flow which describes the behaviour of a $k$-dimensional submanifold $M_t\subset \R^d$, $1\leq k<d$, which evolves over time $t$ according to the law \be\label{mcf1} \frac {dx}{dt}= \vec H(x),\ee where $x$ is an arbitrary point of $M_t$, and $\vec H(x)$ is the mean curvature vector of the submanifold in $x$. It has huge variety of applications ranging from formation of grain boundaries in metals to image processing. The mean curvature flow (MCF) is the formal negative gradient flow of the volume functional $\vol\simeq\mathcal H^{k}$, \be \partial_t M_t = -\grad  \vol(M_t),\ee where the ``manifold of $k$-dimensional submanifolds'' is equipped with the $L^2$ Riemannian structure (see \cite{BBM14,MM5} or our formula \eqref{eq:Rm}), and the gradient is understood in the sense of this structure. Hence, as usual in the context of gradient flows \cite{villani03topics,AGS06}, the volume functional, being the driving entropy of the gradient flow, decays with time in the following way:
\be \label{decay} \frac {d}{dt} \vol(M_t) = -\langle\grad \vol(M_t),\grad \vol(M_t)\rangle_{M_t}\\=-\int_{M_t}|\vec H|^2\, d \mathcal H^{k}. \ee

There is an avalanche of works about the theoretical and numerical aspects of the mean curvature flow. One can introduce minimizing movement schemes \`a la de Giorgi which exploit the gradient flow structure \cite{Alm93,LS95}.  The solutions should a priori collapse in finite time, but singular behaviour before the breakdown does not permit existence of smooth solutions up to the final moment. There exist various strategies to go beyond the first singularity. The pioneering work \cite{B78} relaxed the notion of solution so that the evolving objects were barely varifolds.  The level-set approach \cite{OS88} led to solutions in the Crandall-Lions viscosity sense \cite{ES91,CGG91,AS96}. For the curve-shortening flow, i.e., when $k=1$, one can define and construct a weak solution as a limit of certain curves which live in an ambient space of higher dimension and which are called ramps \cite{AG92,Per03}. Other types of weak solutions for the curve-shortening flow were introduced and studied in \cite{A91i,A91ii,D97,H15,BD18}.

There are two issues which mar the overall harmony.  The first one is that the underlying Riemannian distance is degenerate (i.e., one can connect any two surfaces with a path of arbitrarily small Riemannian length), cf. \cite{MM5,MM06,MM07,BHM12,BBM14}, which is unpleasant since a non-degenerate metric space structure is an important precondition in treatment of gradient flows, see \cite{AGS06,San17}.
The second issue is that the Hausdorff measure $\mathcal H^{k}$ is not uniformly contracted by the flow, that is, if $T_t:x(0)\mapsto x(t)$ is the flow operator describing the trajectories of material particles forming $M_t$ in the ambient space $\R^d$, then the property \be \label{good} (T_t)_\#\left( \frac 1 {\vol(M_0)} \mathcal H^{k}\mres M_0 \right)=\frac 1 {\vol(M_t)}\mathcal H^{k}\mres M_t\ee is violated except for some very special scenarios as a shrinking sphere. From the numerical perspective, this means that the flow destroys the uniform density of gridpoints on $M_t$, which is unwelcome and may cause computational instabilities \cite{MS01,SC09}. For the curve-shortening flow in the plane this can be fixed \cite{AL86,MS01} by adding a certain tangential motion to the right-hand side of \eqref{mcf1} in order to conserve the uniform density of the moving particles without affecting the evolution of the shapes.

In this paper, we suggest a different approach which simultaneously eliminates the two above mentioned drawbacks of the mean curvature flow, and which is applicable for any dimensions $k$ and $d$. The idea is to consider the flow which is the closest possible to the original MCF \eqref{mcf1} in a certain least-squares sense among the flows which uniformly contract the $k$-Hausdorff measure (in other words, which respect the uniform density of gridpoints). We employ the infinite-dimensional manifold $\mathcal A_k$ (consisting of objects of the form $\eta=\lambda \xi$, where $\lambda>0$ is a scalar which quantifies the volume of $M$, and $\xi$ is a volume-preserving immersion in the sense of \cite{GS79}) endowed with the parametrization-invariant $L^2$ metric. Our flow is driven by the orthogonal projection of the mean curvature onto $T\mathcal A_k$. We dub the resulting object the uniformly compressing mean curvature flow (UCMCF) because the evolving surfaces can be thought of as being constituted by fluid particles whose density depends merely on time (the surfaces in question up to a time-dependent constant are \emph{incompressible membranes} \cite{Ev73,Fol94,Mol17}). The UCMCF is by construction the negative gradient flow of the volume functional on $\mathcal A_k$. It is a genuinely geometric flow in the sense that the evolution of submanifolds $M_t=\eta(t)(M)$ does not depend on their parametrization.

Unlike the tangentially-corrected MCF \cite{AL86,MS01}, our flow differs from the classical MCF in the normal direction and thus the geometrical evolution of the submanifolds along the two flows do not coincide in general (although they do coincide for a shrinking sphere). Nevertheless, we show that the qualitative behavior of UCMCF is quite similar to the one of the usual MCF, thus it may be used as a substitute for the MCF in applications.

We will observe that the our flow collapses in finite time, and in order to study the evolution and stability of the shapes before the breakdown we need to renormalize the problem both in time and in space.
Surprisingly enough, our normalized flow is also a gradient flow: namely, the positive gradient flow of the $L^2$-mass on the space of volume-preserving immersions. Our recent work \cite{ShV17} studies the gradient flow of a different functional (potential energy) on a similar Riemannian structure, which turns out to be a model for an overdamped fall of an inextensible string in a gravitational field. A similar mechanical interpretation for our normalized UCMCF is an overdamped motion of an inextensible loop ($k=1$) or an incompressible membrane ($k>1$) repelled from the origin with the force field identically equal to the radius vector.

Other gradient flows of inextensible strings were considered in \cite{Koi96,Oel11,Oel14,O07,O08}. In those papers, additional forth-order terms coming from the bending energy appear, which help to secure non-degenerate parabolicity of the equations and decrease the difficulties created by the Lagrange multiplier, cf. \eqref{eq:constrxi}, \eqref{eq:xi0} below.

Both the original and normalized flows can be viewed as formal gradient flows on certain submanifolds of the Wasserstein space \cite{otto01,villani03topics,villani08oldnew} of probability measures endowed with Otto's Riemannian structure, cf. Section \ref{opt.transport}.

We will mostly work with the immersed curves $1=k<d$. The Appendix is devoted to the general case  $1\leq k<d$. It turns out to be more convenient to analyze the normalized flow, which allows  us to descry the asymptotic behaviour of the shapes near the breakdown. We show local strong well-posedness of the problem. We characterize the steady states, and prove global existence of strong solutions for the initial data which are close to the steady states which maximize the driving $L^2$-mass, i.e., to simple circles.  We establish the exponential decay of such a global solution to a steady state. We address the global solvability for any Lipschitz initial curve which does not need to be close to the equilibria, and we prove existence of suitably defined weak solutions. Our approach is based on approximation of the gradient flow on the manifold of volume-preserving immersions by Hilbertian gradient flows; in particular, we do not use the ramps. Unlike \cite{AG92,Per03,A91i,A91ii,D97,H15}, our weak solutions are $H^1$-regular in time.

In this paper, we make a technical and geometrically non-restrictive assumption that the center of mass is fixed at the origin; otherwise the center of mass would fly away to infinity.

\section{The flow}
\label{sec:grad}
\subsection{Uniformly compressing curve-shortening flow} \label{1st}
Let $\mathbb{S}^1\simeq \R/\Z$ denote the circle of length $1$. For $d\geq 2$ let $\mathcal{K}$ be the space of closed curves $\mathcal{K}:=\{\eta: \eta\in H^2(\mathbb{S}^1;\R^d), \int_0^1\eta(s) ds =0\}$.
Let $L:\mathcal{K}\rightarrow \R$, $L(\eta):=\int_0^1|\p_s\eta|\ ds$ be the length functional. We consider the space of immersed curves with the constant speed parametrization, i.e.,
\begin{align*}
 \mathcal{A}:=\{\eta\in \mathcal{K}: \ &|\p_s\eta(s)|=L(\eta)>0 \text{ for all } s\in \Sf\}.
 \end{align*}
Arguing as Theorem A.1 in \cite{Preston-2012} we see that $\mathcal{A}$ is a smooth Hilbert submanifold of $\mathcal{K}$, with the tangent space
 \begin{align*}
 T_\eta\mathcal{A}=&\{w\in H^2(\mathbb{S}^1;\R^d): \frac{d}{ds}\left(\p_s w(s)\cdot \p_s \eta(s)\right)=0 \text{ for a.e. } s\in \Sf\\
 & \text{ and }\int_0^1w(s)\ ds=0\},\ \eta\in \mathcal{A}.
 \end{align*}
We endow $\mathcal{K}$ with the Riemannian metric
\begin{align*}
\langle v,w\rangle_{T_\eta\mathcal{K}}:=\int_0^1 v(s)\cdot w(s)|\p_s\eta(s)|\ ds,
\end{align*}
which is invariant under reparametrization, cf. \cite{MM06}. It induces a metric on $\mathcal{A}$:
\begin{align}\label{eq:metric}
\langle v,w\rangle_{T_\eta\mathcal{A}}=\int_0^1 v(s)\cdot w(s) L(\eta)\ ds.
\end{align}

\begin{prop} \label{rimnond1} The Riemannian distance $d_{\mathcal{A}}$ is non-degenerate. \end{prop}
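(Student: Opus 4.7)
The plan is to show that $d_{\mathcal{A}}(\eta_0,\eta_1)>0$ for every pair of distinct elements $\eta_0,\eta_1\in\mathcal{A}$ by exhibiting, on $\mathcal{A}$, a rich enough family of real-valued functionals that are Lipschitz with respect to $d_{\mathcal{A}}$ and that separate points. The constant-speed constraint $|\p_s\eta|\equiv L(\eta)$ is the key feature that should rescue us from the Michor--Mumford degeneracy: since $|\p_s\eta|$ is spatially constant it factors out of the metric integral, giving the weighted form $\langle v,w\rangle_{T_\eta\mathcal{A}}=L(\eta)\langle v,w\rangle_{L^2}$, and the parametrization-type wiggles that would otherwise dilute the length are inadmissible in $\mathcal{A}$. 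The only a priori obstacle is that $L(\eta)$ could collapse to zero along a candidate path.

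I would first record the Poincar\'e--Wirtinger bound $L(\eta)\ge 2\pi\|\eta\|_{L^2}$ on $\mathcal{A}$, which is immediate from $\int_0^1\eta\,ds=0$ together with $\|\p_s\eta\|_{L^2}=L(\eta)$. Along any $C^1$ path $\gamma:[0,1]\to\mathcal{A}$, the identity $\|\dot\gamma\|_{L^2}=\|\dot\gamma\|_{T_\gamma\mathcal{A}}/\sqrt{L(\gamma)}$ combined with Cauchy--Schwarz and the Poincar\'e lower bound on $L$ should produce
\begin{equation*}
\left|\frac{d}{dt}\|\gamma(t)\|_{L^2}^{3/2}\right|\le \frac{3}{2\sqrt{2\pi}}\,\|\dot\gamma(t)\|_{T_{\gamma(t)}\mathcal{A}}.
\end{equation*}
Integrating in $t$ shows on the one hand that $\eta\mapsto\|\eta\|_{L^2}^{3/2}$ is $d_{\mathcal{A}}$-Lipschitz (already settling the case $\|\eta_0\|_{L^2}\ne\|\eta_1\|_{L^2}$), and on the other hand that along any path starting at $\eta_0$ of sufficiently small Riemannian length the quantity $\|\gamma(t)\|_{L^2}$ stays above $\tfrac12\|\eta_0\|_{L^2}$, and hence $L(\gamma(t))\ge L_\ast:=\pi\|\eta_0\|_{L^2}>0$ throughout.

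With this uniform lower bound on $L$ in hand, I would treat the general case by using the linear test functionals $F_v(\eta):=\int_0^1 v(s)\cdot\eta(s)\,ds$ indexed by $v\in L^2(\Sf;\R^d)$. A direct Cauchy--Schwarz calculation gives $|\tfrac{d}{dt}F_v(\gamma(t))|\le \|v\|_{L^2}\|\dot\gamma(t)\|_{T_{\gamma(t)}\mathcal{A}}/\sqrt{L_\ast}$ on short paths, so each $F_v$ is locally $d_{\mathcal{A}}$-Lipschitz with an explicit constant. Since distinct elements of $\mathcal{A}$ are distinct as elements of $L^2(\Sf;\R^d)$, some $v$ satisfies $F_v(\eta_0)\ne F_v(\eta_1)$; integrating the Lipschitz bound along any hypothetical sequence of paths from $\eta_0$ to $\eta_1$ with lengths tending to $0$ would then force $F_v(\eta_0)=F_v(\eta_1)$, a contradiction. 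The main technical obstacle throughout is the weight $\sqrt{L(\eta)}$ in the metric, and the bootstrap via $\|\gamma\|_{L^2}^{3/2}$ is tailored precisely to furnish a uniform lower bound on $L(\gamma(t))$ without having assumed one.
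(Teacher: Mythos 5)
Your argument is correct and rests on the same two pillars as the paper's own proof: the factorization $\|v\|_{T_\eta\mathcal A}=\sqrt{L(\eta)}\,\|v\|_{L^2(\Sf)}$ and Wirtinger's inequality, which together prevent the weight $L$ from collapsing along any short path emanating from a fixed $\eta_0\neq 0$, after which the Riemannian length dominates a fixed multiple of the $L^2$ displacement. The paper implements this via a stopping-time and triangle-inequality argument at the first instant $L$ drops to a threshold $m$, whereas you package the same mechanism as the global $d_{\mathcal A}$-Lipschitz estimate for $\eta\mapsto\|\eta\|_{L^2(\Sf)}^{3/2}$; this is a slightly slicker bookkeeping of an essentially identical idea.
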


\begin{proof} Take any two closed curves $\eta_0,\eta_1\in \mathcal{A}$. Renormalizing if needed, we can suppose that $\|\eta_0-\eta_1\|_{L^2(\mathcal S^1)}=1$. We claim that \be \label{deg_1} d_{\mathcal{A}}(\eta_0,\eta_1)\geq m:=\frac 1 2 \min(L(\eta_0),\|\eta_0\|^2_{L^2(\mathcal S^1)},2).\ee If not, there exists a $C^1$ path $\eta:[0,1]\to \mathcal{A}$, $\eta(0)=\eta_0$, $\eta(1)=\eta_1$, with Riemannian length
\begin{equation} \label{deg_2}
\begin{split}
 \mathcal L(\eta)&:=\int_0^1\sqrt{\langle \dot\eta(t),\dot\eta(t)\rangle_{T_{\eta(t)}\mathcal{A}}} \,dt\\
&=\int_0^1 \|\dot\eta(t)\|_{L^2(\mathcal S^1)} \sqrt{L(\eta(t))}\,dt< m.
\end{split}
\end{equation}
 Since by the Minkowski inequality and integration by parts
 \begin{align*}
 \int_0^1\|\dot\eta(t)\|_{L^2(\mathcal S^1)}\, dt \geq \|\eta_1-\eta_0\|_{L^2(\mathbb{S}^1)}= 1,
 \end{align*}
 we have $L(\eta(t))<m^2$ for some $t$. Due to continuity of $L(\eta(t))$ (and recall that $L(\eta_0)\geq 2m$ from \eqref{deg_1}), there exists $t_*\in (0,1)$ such that $L(\eta(t_*))=m$ and $L(\eta(t))>m$ for $t<t_*$. Then \be \label{deg_3} \mathcal L (\eta)\geq \mathcal L (\eta|_{(0,t_*)})\geq \sqrt m\|\eta_0-\eta(t_*)\|_{L^2(\mathcal S^1)}.\ee By Wirtinger's inequality, \be \label{deg_4} \|\eta(t_*)\|_{L^2(\mathcal S^1)}\leq \frac 1 {2\pi} \|\p_s\eta(t_*)\|_{L^2(\mathcal S^1)}=\frac m {2\pi}.\ee Combining  \eqref{deg_2}---\eqref{deg_4}, we infer $$\sqrt{2m}\leq \|\eta_0\|_{L^2(\mathcal S^1)}\leq \|\eta_0-\eta(t_*)\|_{L^2(\mathcal S^1)}+\|\eta(t_*)\|_{L^2(\mathcal S^1)}\leq \sqrt m\left(1 +\frac 1 {2\pi}\right),$$  which is a contradiction. \end{proof}

We are interested in the formal gradient flow of the length functional $L(\eta)=\int_0^1|\p_s\eta| \ ds$, $\eta\in\mathcal{A}$, under the metric \eqref{eq:metric}: \begin{equation} \label{eqgf} \p_t\eta=-\grad_{\mathcal A} L(\eta(t)).\end{equation}
In order to derive the PDE formulation of \eqref{eqgf}, we compute (formally) the orthogonal projection from $T_\eta\mathcal{K}$ (which can be identified with $\mathcal{K}$) onto $T_\eta\mathcal{A}$ with respect to the metric \eqref{eq:metric} (cf. Proposition 3.2 in \cite{Preston-2012}).

\begin{lem}\label{lem:proj}
Let $\eta\in \mathcal{A}\cap H^4(\mathbb{S}^1;\R^d)$. The orthogonal projection $P_\eta: T_\eta\mathcal{K}\rightarrow T_\eta\mathcal{A}$ is given by
\begin{align}
P_\eta(z)&=z-\p_s(\sigma\p_s\eta),\text{ where } \sigma: \mathbb S^1\to \R \text{ solves }\notag\\
L^2\p_{ss}\sigma -|\p_{ss} \eta |^2\sigma &= \p_sz\cdot \p_s\eta+const, \label{eq:constraint}\\
\int_0^1\sigma(s) ds &=0. \label{eq:sigma}
\end{align}
\end{lem}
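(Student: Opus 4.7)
I would organize the proof around the standard Lagrange-multiplier picture: $\mathcal{A}$ is cut out of $\mathcal{K}$ by the pointwise scalar constraint $|\partial_s\eta|^2 = \text{const}$, whose linearization at $\eta$ reads $\partial_s(\partial_sv\cdot\partial_s\eta)=0$. Hence the orthogonal complement (with respect to the weighted $L^2$ inner product \eqref{eq:metric}) of $T_\eta\mathcal A$ inside $T_\eta\mathcal K$ should be parametrized by a scalar Lagrange multiplier $\sigma:\mathbb S^1\to\R$, and the natural ansatz obtained by formally dualizing the linearized constraint against $\sigma$ and integrating by parts is $P_\eta(z)-z = -\partial_s(\sigma\,\partial_s\eta)$.

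Concretely, I would first verify the orthogonality statement: for any $v\in T_\eta\mathcal A$, integration by parts gives
\[
\langle \partial_s(\sigma\partial_s\eta),v\rangle_{T_\eta\mathcal A}
= L\int_0^1\partial_s(\sigma\partial_s\eta)\cdot v\,ds
= -L\int_0^1 \sigma\,\big(\partial_sv\cdot\partial_s\eta\big)\,ds.
\]
By the definition of $T_\eta\mathcal A$, the scalar $\partial_sv\cdot\partial_s\eta$ is a constant (depending on $v$), so the right-hand side equals that constant times $-L\int_0^1\sigma\,ds$. Imposing vanishing for all $v$ yields exactly \eqref{eq:sigma}; moreover, one checks $v=\lambda\eta\in T_\eta\mathcal A$ produces a nonzero such constant, so \eqref{eq:sigma} is both necessary and sufficient.

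Next I would derive \eqref{eq:constraint} by demanding that $P_\eta(z)=z-\partial_s(\sigma\partial_s\eta)$ belong to $T_\eta\mathcal A$. The zero-mean condition is automatic since $\int_0^1\partial_s(\sigma\partial_s\eta)\,ds=0$ by periodicity and $\int_0^1z\,ds=0$. For the constant-speed linearized condition, I compute $\partial_s P_\eta(z)\cdot\partial_s\eta$ using the two identities obtained from differentiating $|\partial_s\eta|^2=L^2$: namely $\partial_s\eta\cdot\partial_{ss}\eta=0$ and $\partial_s\eta\cdot\partial_{sss}\eta=-|\partial_{ss}\eta|^2$. Expanding $\partial_{ss}(\sigma\partial_s\eta)$ and substituting, the cross term drops out and I obtain
\[
\partial_s P_\eta(z)\cdot\partial_s\eta
= \partial_s z\cdot\partial_s\eta - L^2\partial_{ss}\sigma + |\partial_{ss}\eta|^2\sigma.
\]
Requiring this to be constant in $s$ is precisely \eqref{eq:constraint}.

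The main analytic content is the solvability of \eqref{eq:constraint}–\eqref{eq:sigma} on $\mathbb S^1$. The operator $\mathcal L\sigma := L^2\partial_{ss}\sigma-|\partial_{ss}\eta|^2\sigma$ is self-adjoint and, testing against $\sigma$ itself, satisfies $-\int \sigma\,\mathcal L\sigma = L^2\int|\partial_s\sigma|^2+\int|\partial_{ss}\eta|^2\sigma^2 \geq 0$, vanishing only if $\sigma$ is constant and $|\partial_{ss}\eta|^2\sigma^2\equiv 0$. Since $\eta\in\mathcal A$ is a closed constant-speed curve, $\partial_{ss}\eta\not\equiv 0$ (otherwise $\eta$ would be affine and thus not closed with $L>0$), so $\mathcal L$ has trivial kernel and is invertible on $H^2(\mathbb S^1)$. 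Testing $\mathcal L^{-1}(1)$ against itself via the same energy identity shows $\int_0^1\mathcal L^{-1}(1)\,ds<0$, which lets one choose the free constant in \eqref{eq:constraint} uniquely so that the resulting $\sigma$ has zero mean, completing the construction. The main obstacle is precisely this invertibility/normalization step; everything else reduces to the pair of algebraic identities coming from $|\partial_s\eta|^2=L^2$.
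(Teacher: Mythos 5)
Your proof is correct and follows essentially the same route as the paper's: orthogonality of $\p_s(\sigma\p_s\eta)$ to $T_\eta\mathcal A$ via integration by parts combined with the zero-mean condition \eqref{eq:sigma}, and membership of $z-\p_s(\sigma\p_s\eta)$ in $T_\eta\mathcal A$ via the identities $\p_s\eta\cdot\p_{ss}\eta=0$ and $\p_s\eta\cdot\p_{sss}\eta=-|\p_{ss}\eta|^2$ coming from the constant-speed constraint. The only addition is your solvability argument for \eqref{eq:constraint}--\eqref{eq:sigma} (trivial kernel of $L^2\p_{ss}-|\p_{ss}\eta|^2$ since $\p_{ss}\eta\not\equiv 0$ for a closed curve, and the sign of the mean of the solution with right-hand side $1$ to fix the free constant), which is sound and fills in a point the paper leaves implicit, as it computes the projection only formally.
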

\begin{proof}
\begin{itemize}
\item[(1)] We first show that for any $\sigma$ satisfying \eqref{eq:sigma}, the vector field $\p_s(\sigma\p_s\eta)$ is orthogonal to $T_\eta\mathcal{A}$. Indeed, given any $w\in T_\eta\mathcal{A}$,
\begin{align*}
\langle \p_s(\sigma\p_s\eta), w\rangle_{T_\eta\mathcal{K}} &=L(\eta)\int_0^1 \p_s(\sigma\p_s\eta) \cdot w \ ds\\
&=L(\eta)\sigma\p_s\eta\cdot w \big|_{s=0}^{s=1}-L(\eta)\int_0^1\sigma\p_s\eta\cdot \p_s w\ ds\\
&=0.
\end{align*}
The first term is zero since we work on $\Sf$. The second term is zero because $\p_s\eta\cdot \p_sw=const$ and $\sigma$ has mean zero.
\item[(2)] Next we show $w=z-\p_s(\sigma\p_s\eta)\in T_\eta\mathcal{A}$. We will mainly check the condition $\p_sw\cdot \p_s\eta=const$. Indeed,
\begin{align*}
\p_sw\cdot \p_s\eta=\p_sz\cdot \p_s\eta-\p_{ss}\sigma|\p_s\eta|^2-2\p_s\sigma\p_{ss}\eta\cdot\p_s\eta-\sigma \p_{sss}\eta\cdot \p_s\eta.
\end{align*}
The constant speed parametrization $|\p_s\eta|=L=const$ yields $\p_{ss}\eta\cdot \p_s\eta=0$ and $\p_{sss}\eta\cdot\p_s\eta=-|\p_{ss}\eta|^2$. Thus
\begin{align*}
\p_sw\cdot \p_s\eta=\p_s z\cdot \p_s\eta-\p_{ss}\sigma L^2 +\sigma |\p_{ss}\eta|^2,
\end{align*}
which is constant by \eqref{eq:constraint}.
\end{itemize}
\end{proof}

Since $\mathcal A$ is a smooth submanifold of the space $\mathcal K$, the general definition of the gradient \cite{Lang} implies that \begin{align*}\grad_{\mathcal A}L(\eta)=P_\eta(\grad_{\mathcal K}L(\eta)).\end{align*}
Standard calculus of variations shows that the first $L^2$-variation of $L(\eta)$ is
\begin{align*}
\delta L(\eta)=-\p_s\left(\frac{\p_s\eta}{|\p_s\eta|}\right).
\end{align*}
But\begin{align*}\langle \grad_{\mathcal K}L(\eta),w\rangle_{T_\eta\mathcal{K}}=\int_0^1 \delta L(\eta)(s)\cdot w(s)\ ds,\end{align*} for every $w\in T_\eta\mathcal{K}\simeq\mathcal K$.
We conclude that
\begin{align*}
\grad_{\mathcal K}L(\eta)=-\frac{1}{|\p_s\eta|}\p_s\left(\frac{\p_s\eta}{|\p_s\eta|}\right).
\end{align*}
By Lemma \ref{lem:proj} the orthogonal projection of the negative gradient in $\mathcal K$ to the tangent space $T_\eta\mathcal{A}$ is
\begin{align*}
P_\eta(-\grad_{\mathcal K} L(\eta))=\frac{\p_{ss}\eta-\p_s(\sigma\p_s\eta)}{L^2},
\end{align*}
where $\sigma:\Sf\to \R$ satisfies
\begin{align*}
 L^2\p_{ss}\sigma-\sigma|\p_{ss}\eta|^2=\p_{sss}\eta\cdot \p_s\eta+const=-|\p_{ss}\eta|^2+const, \quad \int_0^1\sigma ds =0.
\end{align*}
To determine the constant, we integrate in $s$ and thus obtain $const=\int_0^1(1-\sigma)|\p_{ss}\eta|^2ds$. Letting $\tilde{\sigma}:=1-\sigma$ we get the expression for the gradient flow \eqref{eqgf}:
\begin{equation}\label{eq:grad}
\p_t\eta(t,s)=L(t)^{-2}\p_s(\tilde{\sigma}(t,s)\p_s\eta(t,s)),
\end{equation}
where $L(t):=L(\eta(t))$, and the Lagrange multiplier $\tilde{\sigma}(t,s)$ satisfies
\begin{equation*}
\begin{split}
&\p_{ss}\tilde{\sigma}(t,s)-L(t)^{-2}\tilde{\sigma}(t,s)|\p_{ss}\eta(t,s)|^2=-L(t)^{-2}\int_0^1\tilde{\sigma}(t,s)|\p_{ss}\eta(t,s)|^2 ds\text{ for all }(t,s) \\
&\text{and } \int_0^1\tilde{\sigma}(t,s)\ ds =1\text{ for all } t.
\end{split}
\end{equation*}

\begin{rmk}[Sign of $\tilde{\sigma}$]
	\label{rmk:sign_sigma}
	From
	$\|\p_{ss}\eta-P_\eta(\p_{ss}\eta)\|_{L^2(\mathbb{S}^1)}\leq \|\p_{ss}\eta\|_{L^2(\mathbb{S}^1)}$ as well as the orthogonality $\p_s\eta\cdot \p_{ss}\eta=0$ everywhere it follows that
	$$\|\p_{ss}\eta\|_{L^2(\mathbb{S}^1)}^2\geq \|\p_s\sigma\|_{L^2(\mathbb{S}^1)}^2+\|\sigma\p_{ss}\eta\|_{L^2(\mathbb{S}^1)}^2.$$
	Using $\tilde{\sigma}=1-\sigma$ we can rewrite the above inequality as
	\begin{align*}
	2\int_{\mathbb{S}^1}\tilde{\sigma}|\p_{ss}\eta|^2 ds\geq \int_{\mathbb{S}^1}\tilde{\sigma}^2|\p_{ss}\eta|^2 ds + \int_{\mathbb{S}^1}(\p_s\sigma)^2 ds.
	\end{align*}
	Thus $\int_{\mathbb{S}^1}\tilde{\sigma}|\p_{ss}\eta|^2 ds \geq 0$ and the equality holds if and only if $\tilde{\sigma}\equiv 0$ on $\mathbb{S}^1$, which cannot happen since the mean of $\tilde \sigma$ is $1$. Then by \eqref{eq:grad} $\tilde{\sigma}$ satisfies an inhomogeneous elliptic equation with a negative inhomogeneity, and provided $\eta(t)\in C^2(\Sf)$ by the strong maximum principle we infer that $\tilde{\sigma}>0$.
\end{rmk}

\begin{rmk}\label{rmk:constraint}
Assume $\eta(t,s)$ is a classical solution to equation \eqref{eq:grad} emanating from $\eta_0$ which satisfies the constraints $|\p_s\eta_0(s)|\equiv L(\eta_0)$ and $\int_0^1\eta_0(s)ds=0$. Then the constraints are preserved along the flow, i.e.,
\begin{align*}
|\p_s\eta(t,s)|\equiv L(t)\text{ and }\int_0^1\eta(t,s)ds=0 \text{ for each }t.
\end{align*}
To show the first equality we let $Z(t,s):=|\p_s\eta(t,s)|^2-L(t)^2$.  Then a direct computation shows that $Z$ satisfies
\begin{align*}
\p_tZ&=L^{-2}\tilde{\sigma}\p_{ss}Z+2L^{-2}\p_s\tilde{\sigma}\p_sZ+2\left(\p_{ss}\tilde{\sigma}-L^{-2}\tilde{\sigma}|\p_{ss}\eta|^2-L\p_tL\right)\\
&=L^{-2}\tilde{\sigma}\p_{ss}Z+2L^{-2}\p_s\tilde{\sigma}\p_sZ\quad \text{ with }Z(0,s)=0.
\end{align*}
Here we have used the equation of $\tilde{\sigma}$ (note that by (i) of Proposition \ref{prop:length} below, the right hand side of the equation of $\tilde{\sigma}$ is equal to $L\p_tL$).
By Remark \ref{rmk:sign_sigma} above $\tilde{\sigma}(t,s)>0$ for the classical solutions $\eta$. Thus $Z(t,s)=0$ by the strong maximum principle for parabolic equations. To show that $\int_0^1\eta=0$ is preserved along the flow, we integrate the equation along $\Sf$ to obtain $\p_t\int_0^1\eta(t,s)ds =0$.
\end{rmk}

\subsection{Evolution of the length and the $L^2$-mass}

We now exploit the gradient flow structure to derive some evolution properties of the length and $L^2$-mass of the curves. Throughout this subsection we assume the existence of the $H^2$ solutions  to the gradient flow \eqref{eq:grad}.

The first proposition is about the evolution of the length functional.
\begin{prop}\label{prop:length}
Let $\eta$ be a solution to \eqref{eq:grad}. Then
\begin{itemize}
\item [(i)] $\p_tL(\eta)=-L^{-3}\int_0^1\tilde{\sigma}|\p_{ss}\eta|^2 ds$.
\item [(ii)] $\p_{tt}L(\eta)^2 \geq 0$.
\end{itemize}
\end{prop}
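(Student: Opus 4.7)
For (i), I plan to compute $\partial_t L$ directly. Since $L(\eta) = \int_0^1 |\partial_s\eta|\, ds$ and $|\partial_s\eta|=L$ is independent of $s$ on $\mathcal{A}$, differentiating in $t$ and integrating by parts gives
\[
\partial_t L = \frac{1}{L}\int_0^1 \partial_s\eta\cdot \partial_s\partial_t\eta\, ds = -\frac{1}{L}\int_0^1 \partial_{ss}\eta\cdot \partial_t\eta\, ds.
\]
Substituting \eqref{eq:grad} in the form $\partial_t\eta = L^{-2}(\partial_s\tilde\sigma\,\partial_s\eta + \tilde\sigma\,\partial_{ss}\eta)$ and using the orthogonality $\partial_s\eta\cdot\partial_{ss}\eta = 0$ (which holds because $|\partial_s\eta|=L$ is constant in $s$ along the flow, cf.\ Remark \ref{rmk:constraint}) yields (i) at once.

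For (ii), I begin with the gradient-flow identity: since $\partial_t\eta = -\grad_{\mathcal{A}} L$, the metric \eqref{eq:metric} gives $\partial_t L = -\|\partial_t\eta\|_{T_\eta\mathcal{A}}^2 = -L\int_0^1|\partial_t\eta|^2\, ds$, whence
\[
\partial_t(L^2) = 2LL_t = -2L^2\int_0^1|\partial_t\eta|^2\, ds.
\]
Setting $F(t) := L(t)^2\int_0^1|\partial_t\eta|^2\, ds$, the claim $\partial_{tt}(L^2)\geq 0$ is equivalent to $F$ being non-increasing in $t$.

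To show $F'\leq 0$, I differentiate $F$ once more. The computation of $\int \partial_t\eta\cdot \partial_{tt}\eta\, ds$ is carried out by differentiating \eqref{eq:grad}, which produces a term $L^{-2}\partial_s(\tilde\sigma_t\,\partial_s\eta + \tilde\sigma\,\partial_{ts}\eta)$; an integration by parts then invokes two key identities. First, differentiating $|\partial_s\eta|^2 = L^2$ in $t$ gives $\partial_{ts}\eta\cdot \partial_s\eta = LL_t$, constant in $s$, so the $\tilde\sigma_t$-contribution becomes $LL_t\int \tilde\sigma_t\, ds$, which vanishes because $\int \tilde\sigma\, ds = 1$ is preserved along the flow. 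Second, $L_t = -L\int|\partial_t\eta|^2\, ds$ from the gradient-flow identity. Combining these, the surviving terms collapse to
\[
F'(t) = 2L_t^2 - 2\int_0^1 \tilde\sigma\,|\partial_{ts}\eta|^2\, ds.
\]

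Finally, I would bound the second term from below by $L_t^2$ via a pointwise Cauchy--Schwarz: letting $\hat T := \partial_s\eta/L$ denote the unit tangent, the identity $\partial_{ts}\eta\cdot \hat T = L_t$ yields $|\partial_{ts}\eta|^2 \geq L_t^2$ pointwise in $s$. Combined with $\tilde\sigma \geq 0$ (Remark \ref{rmk:sign_sigma}) and the normalization $\int \tilde\sigma\, ds = 1$ this gives $\int_0^1 \tilde\sigma\,|\partial_{ts}\eta|^2\, ds \geq L_t^2$, so $F'\leq 0$ and (ii) follows. The main obstacle is the careful accounting in the derivation of $F'$: the cancellation of the $\tilde\sigma_t$-term uses both the constant-speed constraint (to pull out $LL_t$) and conservation of $\int\tilde\sigma\, ds$, while the final comparison crucially exploits the positivity of $\tilde\sigma$ from Remark \ref{rmk:sign_sigma}; both structural properties of the flow enter the argument in an essential way.
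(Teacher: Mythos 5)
Your proof is correct, and while part (i) coincides with the paper's computation (you integrate by parts in the opposite direction, landing on the same identity via $\p_s\eta\cdot\p_{ss}\eta=0$), your argument for part (ii) takes a genuinely different and noticeably leaner route. The paper sets $\lambda=-L\p_tL$, differentiates the elliptic equation for $\tilde\sigma$ in time, and then decomposes $\p_{ts}\eta$ into tangential, normal and residual components ($\p_s\eta$, $N$, $R$), computing $k\dot k$ explicitly to arrive at
\[
L^4\dot\lambda=-2\int_0^1\tilde{\sigma}\left(\tilde{\sigma}\p_s k + 2\p_s\tilde{\sigma} k\right)^2 ds-2 \int_0^1\tilde{\sigma}^3|R|^2 ds\leq 0.
\]
You instead work with $F=L^2\int_0^1|\p_t\eta|^2\,ds$ (which equals the paper's $\lambda$, by the gradient-flow identity \eqref{eq:dtL}) and obtain the compact formula $F'=2L_t^2-2\int_0^1\tilde\sigma|\p_{ts}\eta|^2\,ds$, closing the argument with the pointwise Cauchy--Schwarz bound $|\p_{ts}\eta|^2\geq(\p_{ts}\eta\cdot\p_s\eta/L)^2=L_t^2$ together with $\tilde\sigma\geq0$ and $\int_0^1\tilde\sigma\,ds=1$. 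The two are consistent: expanding $|\p_{ts}\eta|^2$ in the paper's orthogonal frame, the tangential part contributes exactly the $2L_t^2$ you cancel, and the remaining normal and residual parts reproduce the paper's two nonpositive integrals. What your version buys is the avoidance of the Frenet-type bookkeeping (and of the degenerate points where $\p_{ss}\eta=0$, which the paper must handle by a separate convention for $k$ and $R$); what the paper's version buys is an explicit formula for $\dot\lambda$ identifying precisely when the inequality is saturated. Both arguments use the sign $\tilde\sigma\geq0$ from Remark \ref{rmk:sign_sigma} and the conservation of $\int_0^1\tilde\sigma\,ds=1$ in an essential way, as you note.
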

\begin{proof}
1. Since $|\p_s\eta(s,t)|=L(t)$, then
\begin{align*}
\p_t L&=\p_t |\p_s\eta(t,s)|\\
&= \frac{\p_s\eta(t,s)}{|\p_s\eta(t,s)|}\cdot \p_{ts}\eta(t,s) \\
&=\frac{\p_s\eta(t,s)}{L}\cdot \p_s\left(\frac{\tilde{\sigma}\p_{ss}\eta}{L^2}+ \frac{\p_s\tilde{\sigma}\p_s\eta}{L^2}\right).
\end{align*}
Note that the LHS does not depend on $s$. Thus an integration in $s$ from $0$ to $1$ and an integration by parts yield the desired equality. \\

2. Let $\lambda(t):=-L(t)\p_tL(t)=L^{-2}\int_0^1\tilde{\sigma}|\p_{ss}\eta|^2ds$. It suffices to show that $\p_t\lambda(t)\leq 0$. Note that one can rewrite the equation of $\tilde{\sigma}$ as
\begin{align*}
\p_{ss}\tilde{\sigma}-L^{-2}\tilde{\sigma}|\p_{ss}\eta|^2 = -\lambda.
\end{align*}
Let $N$ be the unit inner normal to the curve $\eta$, and let $k$ be such that $\p_{ss}\eta=kN$. Differentiation of the equation of $\tilde{\sigma}$ in time yields
\begin{align*}
\p_{ss}\dot{\tilde{\sigma}}-\dot{\tilde{\sigma}}k^2 L^{-2}-2\tilde{\sigma} k\dot{k} L^{-2} + 2L^{-3}\p_tL \tilde{\sigma}k^2 =-\dot{\lambda}.
\end{align*}
We multiply the above equation by $\tilde{\sigma}$ and integrate in $s$. Integrating by parts, we get
\begin{align*}
\dot{\lambda}&=-\int_0^1 \dot{\tilde{\sigma}}\p_{ss}\tilde{\sigma} ds +L^{-2}\int_0^1 \dot{\tilde{\sigma}}\tilde{\sigma} k^2 ds +2L^{-2}\int_0^1 \tilde{\sigma}^2k\dot{k} ds-2L^{-3}\p_tL\int_0^1\tilde{\sigma}^2k^2 ds\\
&=\lambda\int_0^1\dot{\tilde{\sigma}}  ds +2L^{-2}\int_0^1 \tilde{\sigma}^2k\dot{k} ds-2L^{-3}\p_tL\int_0^1\tilde{\sigma}^2k^2 ds\\
&=2L^{-2}\int_0^1 \tilde{\sigma}^2k\dot{k} ds-2L^{-3}\p_tL\int_0^1\tilde{\sigma}^2k^2 ds.
\end{align*}
Here in the second equality we have used the equation of $\tilde{\sigma}$. The last equality is due to $\int_0^1\tilde{\sigma}(t,s) ds\equiv 1$.
Now we compute $k\dot{k}$. Differentiating the equation of $\eta$ in $s$ we infer
\begin{align*}
\p_{ts}\eta = L^{-2}\left(\p_{ss}\tilde{\sigma} \p_s\eta + 2\p_{s}\tilde{\sigma}\p_{ss}\eta +\tilde{\sigma} \p_{sss}\eta\right).
\end{align*}
Using that $\p_{sss}\eta\cdot \p_s\eta=-k^2$ and $\p_{sss}\eta\cdot N=\p_sk$ we get
\begin{align*}
L^2\p_{t s}\eta &=\left(\p_{ss}\tilde{\sigma} -L^{-2}k^2 \tilde{\sigma}\right)\p_s\eta+\left(2\p_s\tilde{\sigma}k+\tilde{\sigma}\p_sk\right)N+\tilde{\sigma} R\\
&=-\lambda\p_s\eta+\left(2\p_s\tilde{\sigma}k+\tilde{\sigma}\p_sk\right)N+\tilde{\sigma}R
\end{align*}
where $R:=\p_{sss}\eta-(L^{-2}\p_{sss}\eta\cdot \p_s\eta)\p_s\eta-(\p_{sss}\eta\cdot N)N$ and we have used the equation of $\tilde{\sigma}$. Since $\p_sN\cdot N=0$ and $N\cdot R=0$, we have
\begin{align*}
L^2\p_{t ss}\eta \cdot \p_{ss}\eta =-\lambda k^2+k\p_s\left(2\p_s\tilde{\sigma}k+\tilde{\sigma}\p_sk\right)+\tilde{\sigma}(\p_sR\cdot \p_{ss}\eta).
\end{align*}
But $R\cdot \p_{ss}\eta=0$, hence $\p_sR\cdot \p_{ss}\eta= -R\cdot \p_{sss}\eta=-|R|^2$.
Thus
\begin{align} \label{exfl}
L^2\p_{t ss}\eta \cdot \p_{ss}\eta=L^2 k\dot{k}=-\lambda k^2+k\p_s\left(2\p_s\tilde{\sigma}k+\tilde{\sigma}\p_sk\right)-\tilde{\sigma}|R|^2.
\end{align} Until this moment we implicitly assumed that $\p_{ss} \eta\neq 0$. However, \eqref{exfl} is still valid in the points with $\p_{ss} \eta= 0$ since we can make an agreement that $k=| R|=0$ in those points.
Plugging the expression \eqref{exfl} into the equation for $\dot{\lambda}$ we derive
\begin{align*}
\dot{\lambda}&=-2L^{-4}(\lambda+L\p_tL)\int_0^1\tilde{\sigma}^2k^2 ds \\
&+ 2L^{-4}\int_0^1\tilde{\sigma}^2k\p_s(2\p_s\tilde{\sigma}k+\tilde{\sigma}\p_sk)ds - 2L^{-4}\int_0^1\tilde{\sigma}^3|R|^2 ds.
\end{align*}
The first term on the right hand side vanishes due to the definition of $\lambda$. Then an integration by parts yields
\begin{equation}\label{eq:dot_lambda}
\begin{split}
L^4\dot{\lambda}&=-2\int_0^1 \p_s(\tilde{\sigma}^2 k )(\tilde{\sigma}\p_s k)ds - 4\int_0^1\p_s(\tilde{\sigma}^2k)\p_s\tilde{\sigma} k ds -2 \int_0^1\tilde{\sigma}^3|R|^2 ds\\
&=-2\int_0^1\tilde{\sigma}\left(\tilde{\sigma}\p_s k + 2\p_s\tilde{\sigma} k\right)^2 ds-2 \int_0^1\tilde{\sigma}^3|R|^2 ds.
\end{split}
\end{equation}
Note that $\tilde{\sigma}\geq 0$ by Remark \ref{rmk:sign_sigma}. Thus $\dot{\lambda}\leq 0$ and we complete the proof for (ii).
\end{proof}

In the next proposition we show that the $L^2$-mass of the solution decays with constant speed.

\begin{prop}\label{prop:L2}
Let $\eta$ be a solution to the gradient flow \eqref{eq:grad}. Let $M(t):=\frac{1}{2}\int_0^1|\eta(t,s)|^2 ds$ be the $L^2$-mass. Then $\p_tM(t)=-1$.
\end{prop}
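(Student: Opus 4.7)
My plan is a direct computation, differentiating under the integral and using the gradient flow equation \eqref{eq:grad} together with the two structural properties of $\tilde\sigma$ and $\eta$.

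First I would write
\begin{align*}
\partial_t M(t) &= \int_0^1 \eta(t,s)\cdot \partial_t\eta(t,s)\,ds \\
&= L(t)^{-2}\int_0^1 \eta(t,s)\cdot \partial_s\bigl(\tilde\sigma(t,s)\,\partial_s\eta(t,s)\bigr)\,ds.
\end{align*}
Since we work on $\mathbb{S}^1$ there are no boundary terms, so integration by parts in $s$ gives
\begin{align*}
\partial_t M(t) = -L(t)^{-2}\int_0^1 \tilde\sigma(t,s)\,|\partial_s\eta(t,s)|^2\,ds.
\end{align*}

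Next I would invoke Remark \ref{rmk:constraint}, which guarantees that the constant-speed constraint $|\partial_s\eta(t,s)|\equiv L(t)$ is preserved along the flow. Substituting $|\partial_s\eta|^2 = L^2$ collapses the integrand, yielding
\begin{align*}
\partial_t M(t) = -\int_0^1 \tilde\sigma(t,s)\,ds.
\end{align*}
Finally the normalization $\int_0^1 \tilde\sigma(t,s)\,ds = 1$ (which is built into the definition $\tilde\sigma = 1 - \sigma$ together with $\int \sigma = 0$ from \eqref{eq:sigma}) gives $\partial_t M(t) = -1$.

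There is no real obstacle here: the proof is a one-line integration by parts once one uses the two ingredients that have already been established, namely the preservation of the constant-speed parametrization and the unit-mean condition on $\tilde\sigma$. The only thing to be slightly careful about is justifying the integration by parts for $H^2$ solutions, but this is immediate since $\eta \in H^2(\mathbb{S}^1)$ and $\tilde\sigma$ is solved from an elliptic equation with $H^2$ coefficients on a compact manifold without boundary, giving enough regularity to move one derivative from $\partial_s(\tilde\sigma\,\partial_s\eta)$ onto $\eta$.
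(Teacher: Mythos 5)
Your proof is correct and is essentially identical to the paper's: both multiply \eqref{eq:grad} by $\eta$, integrate by parts on $\Sf$, and then use the preserved constraint $|\p_s\eta|\equiv L$ together with $\int_0^1\tilde\sigma\,ds=1$. The extra remarks on regularity and on invoking Remark \ref{rmk:constraint} are fine but do not change the argument.
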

\begin{proof}
We multiply the equation of $\eta$ in \eqref{eq:grad} by $\eta$ and integrate in $s$. An integration by parts implies
\begin{align*}
\p_tM(t)=-L^{-2}\int_0^1 \tilde{\sigma}|\p_s\eta|^2 ds.
\end{align*}
Using that $|\p_s\eta|=L$ and $\int_0^1\tilde{\sigma} ds=1$ we obtain the desired equality.
\end{proof}

An immediate consequence of Proposition \ref{prop:L2} is that the flow becomes extinct in finite time.

\begin{cor}\label{cor:ext}
$M(t)\rightarrow 0$ as $t\rightarrow t^\ast$, where $t^\ast=M(0)=\frac{1}{2}\int_0^1|\eta_0(s)|^2 ds$.
\end{cor}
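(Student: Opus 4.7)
The plan is immediate from Proposition \ref{prop:L2}: the identity $\partial_t M(t) = -1$ is an ODE with explicit solution $M(t) = M(0) - t$, valid on whatever time interval the $H^2$ solution $\eta$ exists. First I would simply integrate this identity from $0$ to $t$ to obtain the explicit formula $M(t) = M(0) - t$. Then, since $M$ is by definition a nonnegative quantity (being $\tfrac12\int_0^1 |\eta(t,s)|^2\,ds$), the formula forces $t \le M(0)$ throughout the lifespan of the solution, and $M(t)$ tends to $0$ precisely at $t^\ast = M(0)$.

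The only conceptual point worth commenting on is that $M(t) \geq 0$ is what rules out extending $M$ linearly beyond $t^\ast$: at $t = M(0)$ the $L^2$-mass hits zero, forcing $\eta(t^\ast,\cdot) \equiv 0$ (contradicting the constraint $|\p_s \eta| = L > 0$ inherited from $\mathcal{A}$), so the flow cannot persist inside $\mathcal{A}$ beyond this time. Hence extinction at $t = t^\ast$ is a genuine breakdown, not just a passage through the origin.

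There is no real obstacle here; the result is a one-line consequence of the previous proposition. The substantive work was done in establishing $\partial_t M(t) = -1$.
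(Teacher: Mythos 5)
Your proof is correct and matches the paper's reasoning: the paper states the corollary as an immediate consequence of Proposition \ref{prop:L2} and later uses exactly the identity $M(t)=t^\ast-t$ obtained by integrating $\p_t M=-1$. Your additional remark about nonnegativity of $M$ forcing the breakdown at $t^\ast$ is a sound elaboration of the same argument.
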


It is also possible to obtain the decay rate of $L(t)$ near the extinction time $t^\ast$.

\begin{cor}\label{cor:ext2}
Let $L_0:=L(\eta_0)>0$ and let $t^\ast $ be the extinction time as in Corollary \ref{cor:ext}. Then for all $t\in [0,t^\ast)$,
\begin{itemize}
\item[(i)] $2\sqrt{2}\pi\sqrt{t^\ast-t}\leq L(t)\leq L_0\sqrt{\frac{t^\ast-t}{t^\ast}}$,
\item[(ii)] $4\pi^2\leq -L(t)\p_tL(t)\leq -(L\p_tL)(t)\big|_{t=0}$.
\end{itemize}
\end{cor}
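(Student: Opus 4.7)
The plan is to combine three ingredients: Wirtinger's inequality (applied to mean-zero $\eta$ with constant speed parametrization), the extinction identity $M(t)=t^\ast-t$ from Proposition \ref{prop:L2}, and the convexity of $L^2$ from Proposition \ref{prop:length}(ii).

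Step 1 (lower bound in (i)). Because $\eta(t,\cdot)$ has zero mean on $\mathbb{S}^1$, Wirtinger's inequality gives
\[
2M(t)=\|\eta(t)\|_{L^2(\mathbb{S}^1)}^2\leq \frac{1}{4\pi^2}\|\p_s\eta(t)\|_{L^2(\mathbb{S}^1)}^2=\frac{L(t)^2}{4\pi^2},
\]
where the last equality uses $|\p_s\eta|=L(t)$. Combined with $M(t)=t^\ast-t$ from Proposition \ref{prop:L2}, this yields $L(t)^2\geq 8\pi^2(t^\ast-t)$, which is the lower bound. In particular $L(t^\ast)=0$.

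Step 2 (upper bound in (i)). By Proposition \ref{prop:length}(ii) the function $f(t):=L(t)^2$ is convex on $[0,t^\ast]$ with $f(0)=L_0^2$ and $f(t^\ast)=0$ (by Step 1). Since a convex function lies below its chord,
\[
L(t)^2\leq \frac{t^\ast-t}{t^\ast}L_0^2+\frac{t}{t^\ast}\cdot 0=L_0^2\,\frac{t^\ast-t}{t^\ast},
\]
and taking square roots gives the claimed upper bound.

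Step 3 (upper bound in (ii)). Write $-L(t)\p_tL(t)=-\tfrac12 f'(t)$. Convexity of $f$ means $f'$ is non-decreasing, so $-\tfrac12 f'$ is non-increasing in $t$; evaluating at $t=0$ yields $-L(t)\p_tL(t)\leq -(L\p_tL)(0)$.

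Step 4 (lower bound in (ii)). This is the only nontrivial estimate. From Step 1, $f(t)\geq 8\pi^2(t^\ast-t)$ on $[0,t^\ast]$ with equality at $t=t^\ast$, so the left derivative $D^- f(t^\ast)$ satisfies
\[
D^- f(t^\ast)=\lim_{h\to 0^+}\frac{f(t^\ast)-f(t^\ast-h)}{h}\leq \lim_{h\to 0^+}\frac{0-8\pi^2 h}{h}=-8\pi^2.
\]
Since $f$ is convex, $f'(t)\leq D^- f(s)$ whenever $t<s$; applying this with $s=t^\ast$ gives $f'(t)\leq -8\pi^2$, i.e., $-L(t)\p_tL(t)=-\tfrac12 f'(t)\geq 4\pi^2$ for all $t\in[0,t^\ast)$.

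The only delicate point is the left-derivative comparison in Step 4, which uses that $L^2$, being convex and reaching zero at $t^\ast$, cannot decay slower than the linear lower envelope near the extinction time; everything else is an immediate consequence of Propositions \ref{prop:length} and \ref{prop:L2} combined with Wirtinger.
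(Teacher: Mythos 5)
Your Steps~1 and~3 match the paper's proof (Wirtinger plus $M(t)=t^\ast-t$ for the lower bound in (i), and monotonicity of $-L\p_tL$ from the convexity of $L^2$ for the upper bound in (ii)). The problem is the assertion at the end of Step~1 that ``in particular $L(t^\ast)=0$.'' What you proved there is only the \emph{lower} bound $L(t)^2\geq 8\pi^2(t^\ast-t)$; the fact that this lower envelope vanishes at $t^\ast$ says nothing about $L$ itself vanishing. A priori, since $L$ is merely decreasing, it could tend to some $L_*>0$ while $M\to 0$ (the curve collapsing in $L^2$ to the origin while staying long and tightly wound --- nothing in Wirtinger or in $M(t)=t^\ast-t$ excludes this). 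Both your Step~2 (the chord argument needs $f(t^\ast)=0$ as the right endpoint of the chord) and your Step~4 (the left-derivative comparison needs $f$ to actually touch the linear lower envelope at $t^\ast$) collapse without this fact, so as written the upper bound in (i) and the lower bound in (ii) are not established.

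The paper closes exactly this gap by a differential inequality rather than by convexity: from $\p_tM=\int_0^1\eta\cdot\p_t\eta=-1$ and Cauchy--Schwarz one gets $\int_0^1|\p_t\eta|^2\,ds\geq \frac{1}{2M(t)}=\frac{1}{2(t^\ast-t)}$, and the gradient-flow identity $\p_tL=-L\int_0^1|\p_t\eta|^2\,ds$ then gives $-\p_t\ln L\geq \frac{1}{2(t^\ast-t)}$, which integrates to $L(t)\leq L_0\sqrt{(t^\ast-t)/t^\ast}$ (and in particular forces $L(t^\ast)=0$). For (ii) the paper combines the same two ingredients with Wirtinger directly: $-L\p_tL=L^2\int_0^1|\p_t\eta|^2\geq \frac{L^2}{2M}\geq 4\pi^2$. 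If you first establish the upper bound in (i) by the paper's differential inequality, then your convexity-based Step~4 becomes a valid (and rather elegant) alternative derivation of the lower bound in (ii); but it cannot be used to bootstrap the upper bound in (i) itself, since that is precisely where $L(t^\ast)=0$ has to come from.
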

\begin{proof}
1. The lower bound follows directly from the Wirtinger's inequality. Indeed, by Proposition \ref{prop:L2}
$$M(t)=t^\ast-t\text{ for } t\in [0,t^\ast).$$
On the other hand, by Wirtinger's inequality
\begin{align*}
M(t)\leq \frac{1}{2}\frac{1}{4\pi^2}\int_0^1|\p_s\eta|^2ds=\frac{L(t)^2}{8\pi^2}.
\end{align*}
Here we have used the assumption $\int_0^1\eta=0$. Combining the above two inequalities together we obtain the lower bound $L(t)\geq 2\sqrt{2}\pi\sqrt{t^\ast-t}$.

Next we show the upper bound. Using $\p_tM(t)=\int_0^1\eta\cdot \p_t\eta =-1$ and the H\"older's inequality we deduce
\begin{align}\label{eq:dtL0}
\int_0^1|\p_t\eta|^2 ds \geq \frac{1}{\int_0^1|\eta|^2}=\frac{1}{2M(t)}=\frac{1}{2(t^\ast-t)}.
\end{align}
On the other hand, by the gradient flow structure
\begin{align}\label{eq:dtL}
\p_tL= -L\int_0^1|\p_t\eta|^2 ds.
\end{align}
Thus $-\p_t\ln L\geq \frac{1}{2(t^\ast-t)}$. Then an integration in $t$ from $0$ to $t$ yields
\begin{align*}
L(t)\leq L_0\sqrt{\frac{t^\ast-t}{t^\ast}}, \quad t\in [0,t^\ast).
\end{align*}

2. By (ii) of Proposition \ref{prop:length}, $t\mapsto -L\p_t L$ is monotone decreasing. Thus $-L\p_tL \leq -L\p_tL \big|_{t=0}$. To see the lower bound, we note that by \eqref{eq:dtL},\eqref{eq:dtL0} and Wirtinger's inequality
\begin{align*}
-L(t)\p_tL(t)=L(t)^2\int_0^1|\p_t\eta|^2 \geq \frac{L(t)^2}{2M(t)}\geq 4\pi^2 .
\end{align*}
\end{proof}

\section{Normalized flow}\label{sec:normalize_flow}\subsection{Renormalization} \label{sec:renor}
One can ask as the $L^2$-mass $M(t)$ goes to zero, whether the curve becomes circular. To study this problem we plan to show that the isoperimetric ratio $\frac{2M(t)}{L^2(t)}$ goes to the optimal constant in the Wirtinger's inequality $\frac{1}{4\pi^2}$ as $t\rightarrow t^\ast$. For that and for many other purposes it is convenient to renormalize the flow.

We first introduce a slow time variable. More precisely, for $t\in [0,t^\ast)$ let
\begin{align*}
\tau(t):=-\ln L(t).
\end{align*}
Note that by Proposition \ref{prop:length}(i) and Corollary \ref{cor:ext2}, $\tau(t)$ is monotone increasing in $t$ and $\tau\rightarrow +\infty$ iff $t\rightarrow t^\ast$; this is also clear in view of  \eqref{eq:gfr}. Next we consider the normalization
\begin{align*}
\xi(\tau, s ):=\frac{\eta(t(\tau),s)}{L(t(\tau))}
\end{align*}
One advantage of using such renormalization is that the curve $\xi(\tau,s)$ has the unit speed parametrization, i.e.,
\begin{align}
L(\tau)&=|\p_s\xi(\tau,s)|=1 \text{ for all } (\tau,s)\in [0,\infty)\times \Sf.\label{eq:constrxi}
\end{align}
A direct computation shows that $\xi$ satisfies the equation
\begin{align}\label{eq:xi0}
\p_\tau\xi =\p_s(\sigma\p_s\xi)+\xi.
\end{align}
Here $\sigma(\tau, s)$ can be viewed as a Lagrange multiplier coming from the constraint \eqref{eq:constrxi}. It satisfies
\begin{align}\label{eq:sigma0}
\p_{ss}\sigma-\sigma|\p_{ss}\xi|^2=-1, \quad \sigma(\tau, 0)=\sigma(\tau,1).
\end{align}
Indeed, from the change of variable $\frac{d\tau}{dt}=-\frac{\p_tL(t)}{L(t)}$. Thus
\begin{align*}
\p_\tau\xi=\frac{\p_t\eta}{L}\frac{d t}{d \tau}-\eta\frac{\p_tL}{L^2}\frac{d t}{d \tau}=-\frac{\p_t\eta}{\p_tL}+\frac{\eta}{L}.
\end{align*}
Using the equation of $\eta$ we have
\begin{align*}
\p_\tau \xi =-\frac{\p_s(\tilde{\sigma}\p_s\eta)}{L^2\p_tL}+\frac{\eta}{L}.
\end{align*}
Letting $\sigma:=\frac{\tilde{\sigma}}{-L\p_tL}$ and writing the above equation in terms of $\xi$ we arrive at \eqref{eq:xi0}. To derive \eqref{eq:sigma0} we can either use the equation of $\tilde{\sigma}$, or use the above equation of $\xi$ together with the constraint $|\p_s\xi|\equiv 1$.

With the same argument as in Section \ref{sec:grad} it is not hard to see that the normalized flow \eqref{eq:xi0} and \eqref{eq:sigma0} can be viewed as the positive gradient flow of the $L^2$-mass $M(\xi):=\frac{1}{2}\int_0^1|\xi(s)|^2$ on the manifold of immersed curves with arc-length parametrization, cf. \eqref{volpres},
\begin{align*}
\tilde{\mathcal{A}}:=\{\xi\in H^2(\mathbb{S}^1;\R^d): \ |\p_s\xi(s)|=1 \text{ for all } s\in \Sf, \ \int_0^1\xi(s)ds =0\}
\end{align*}
with respect to the $L^2(\mathbb{S}^1;\R^d)$-induced metric (see also Appendix \ref{sec:mult}  and our recent work \cite{ShV17} which analyzes the gradient flow of the potential energy on a space very similar to $\tilde{\mathcal{A}}$).

The normalized flow \eqref{eq:xi0}--\eqref{eq:sigma0} can be interpreted in the spirit of \cite{ShV17} as an overdamped motion of an inextensible loop whose particles are repelled from the origin with the force equal to the radius vector.

Using either the gradient flow structure for the normalized flow or tracing the change of variables and normalization, one obtains the monotonic quantities along the flow.

\begin{prop}\label{prop:mono}
Let $\xi$ be a solution to the normalized flow \eqref{eq:xi0}--\eqref{eq:sigma0}. Then
\begin{itemize}
\item[(i)] $\tau\mapsto \int_0^1|\xi(\tau,s)|^2 ds$ is monotone increasing. Moreover, $\int_0^1|\xi(\tau,s)|^2ds \leq \frac{1}{4\pi^2}$ for all $\tau$.
\item [(ii)] $\tau\mapsto \int_0^1\sigma(\tau,s)ds$ is monotone increasing with $\int_0^1\sigma(\tau,s)ds\leq \int_0^1|\xi(\tau,s)|^2ds$ for all $\tau$.
\item [(iii)] As $\tau\rightarrow \infty$ we have $\int_0^1\sigma(\infty,s)ds=\int_0^1|\xi(\infty,s)|^2ds$.
\end{itemize}
\end{prop}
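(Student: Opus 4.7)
The plan is to build everything from one computation. First, differentiate $\int_0^1|\xi(\tau,s)|^2\,ds$ in $\tau$ using the PDE \eqref{eq:xi0}; integrating by parts once and using the constraint $|\p_s\xi|\equiv 1$ yields the key identity
\begin{equation*}
\p_\tau\int_0^1|\xi|^2\,ds = 2\int_0^1|\xi|^2\,ds - 2\int_0^1\sigma\,ds.
\end{equation*}
Hence the monotonicity asserted in (i) reduces to the inequality $\int_0^1\sigma\,ds\leq \int_0^1|\xi|^2\,ds$ of (ii). The upper bound $\int_0^1|\xi|^2\,ds\leq 1/(4\pi^2)$ in (i) is immediate from Wirtinger's inequality applied to the mean-zero function $\xi$, since $|\p_s\xi|\equiv 1$ gives $\int_0^1|\p_s\xi|^2\,ds=1$.

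For (ii), the most efficient route is to pass back through the renormalization. Recalling $\sigma=\tilde\sigma/(-L\p_tL)$ from the derivation of \eqref{eq:xi0}--\eqref{eq:sigma0} and $\int_0^1\tilde\sigma\,ds=1$, one obtains the clean formula $\int_0^1\sigma\,ds=1/\lambda(t)$ with $\lambda(t):=-L(t)\p_tL(t)$. Proposition \ref{prop:length}(ii), which states $\p_{tt}L^2\geq 0$, is exactly $\p_t\lambda\leq 0$; since $\tau$ is increasing in $t$, this makes $1/\lambda=\int_0^1\sigma\,ds$ increasing in $\tau$. To obtain the inequality $\int_0^1\sigma\,ds\leq\int_0^1|\xi|^2\,ds$, combine Proposition \ref{prop:L2} (giving $\int_0^1\eta\cdot\p_t\eta\,ds=\p_tM=-1$) with Cauchy--Schwarz to get $\int_0^1|\p_t\eta|^2\,ds\geq 1/(2M(t))$; then the gradient-flow identity $-L\p_tL=L^2\int_0^1|\p_t\eta|^2\,ds$, cf.\ \eqref{eq:dtL}, yields $\lambda\geq L^2/(2M)$, which rearranges to $\int_0^1\sigma\,ds=1/\lambda\leq 2M(t)/L(t)^2=\int_0^1|\xi|^2\,ds$.

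For (iii), parts (i) and (ii) produce two monotone increasing bounded quantities with limits $A_\infty:=\lim_{\tau\to\infty}\int_0^1|\xi|^2\,ds$ and $B_\infty:=\lim_{\tau\to\infty}\int_0^1\sigma\,ds$, satisfying $B_\infty\leq A_\infty\leq 1/(4\pi^2)$. Integrating the key identity from the first paragraph in $\tau$ over $[0,\infty)$ gives
\begin{equation*}
\int_0^\infty \Bigl(\int_0^1|\xi|^2\,ds-\int_0^1\sigma\,ds\Bigr)d\tau = \frac{A_\infty-\int_0^1|\xi(0,\cdot)|^2\,ds}{2} < \infty.
\end{equation*}
Since the integrand is non-negative, if one had $A_\infty>B_\infty$, then for $\tau$ large enough it would be bounded below by $(A_\infty-B_\infty)/2>0$, contradicting the finiteness above. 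Hence $A_\infty=B_\infty$, which is exactly (iii). The only anticipated technical point is justifying differentiation under the integral in $\tau$ and $t$ and the integration by parts leading to the key identity, but these are routine under the standing assumption of $H^2$ solutions.
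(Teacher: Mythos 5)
Your proof is correct, and it takes a mildly but genuinely different route from the paper's. The paper first proves the monotonicity in (i) intrinsically, by pairing \eqref{eq:xi0} with $\p_\tau\xi$ to get the energy-dissipation identity $\int_0^1|\p_\tau\xi|^2\,ds=\p_\tau\bigl(\tfrac12\int_0^1|\xi|^2\,ds\bigr)$, and only then deduces the inequality $\int_0^1\sigma\,ds\leq\int_0^1|\xi|^2\,ds$ in (ii) by combining this with the identity \eqref{eq:identity2} (your ``key identity''). You invert this logical order: you establish the inequality in (ii) first, independently of (i), by passing back to the unnormalized flow and using Cauchy--Schwarz on $\int_0^1\eta\cdot\p_t\eta\,ds=-1$ together with \eqref{eq:dtL} to get $\lambda=-L\p_tL\geq L^2/(2M)$, i.e.\ $\int_0^1\sigma\,ds=1/\lambda\leq 2M/L^2=\int_0^1|\xi|^2\,ds$ --- this is essentially the same computation the paper uses for the lower bound in Corollary \ref{cor:ext2}(ii) --- and then the monotonicity in (i) falls out of the key identity. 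The remaining pieces coincide with the paper: Wirtinger for the upper bound in (i), the formula $\int_0^1\sigma\,ds=1/\lambda$ together with Proposition \ref{prop:length}(ii) for the monotonicity in (ii), and the same contradiction argument for (iii) (your integrability phrasing and the paper's linear-growth phrasing are equivalent). The paper's ordering has the side benefit of producing the exact identity $\int_0^1|\p_\tau\xi|^2\,ds=\int_0^1|\xi|^2\,ds-\int_0^1\sigma\,ds$, which is reused later in Lemma \ref{lem:para}; your ordering has the benefit that the inequality in (ii) is obtained from an elementary Cauchy--Schwarz estimate without invoking the normalized energy identity \eqref{eq:identity1}. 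There is no circularity in your scheme, and the one technical caveat you flag (differentiation under the integral and the integration by parts for $H^2$ solutions) is the same one the paper implicitly assumes.
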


\begin{proof}
1. We multiply \eqref{eq:xi0} by $\p_\tau \xi$ and integrate in $s$ from $0$ to $1$. After an integration by part and using $\p_s\xi\cdot \p_{s\tau}\xi=0$ (which follows from $|\p_s\xi|=1$) we obtain that
\begin{align}\label{eq:identity1}
\int_0^1 |\p_\tau \xi|^2 ds = \p_\tau\left(\frac{1}{2}\int_0^1|\xi|^2 ds\right).
\end{align}
Thus $\tau\rightarrow \int_0^1|\xi|^2$ is monotone increasing. The upper bound follows from the Wirtinger's inequality
\begin{align*}
\int_{0}^1|\xi|^2 \ ds \leq \frac{1}{4\pi^2} \int_0^1|\p_s\xi|^2 ds = \frac{1}{4\pi^2}.
\end{align*}
Again we have used that $\int_0^1\xi(\tau,s)ds=0$.\\

2. From the definition of $\sigma$ we have
\begin{align*}
\int_0^1\sigma(\tau,s)ds=\frac{\int_0^1\tilde{\sigma}ds}{-L\p_tL}=\frac{1}{-L\p_tL(t(\tau))}.
\end{align*}
By (ii) of Proposition \ref{prop:length}, $-L\p_tL$ is monotone decreasing in $t$ thus in $\tau$. Thus $\int_0^1\sigma(\tau,s)ds$ is monotone increasing in $\tau$. To prove the upper bound, we multiply \eqref{eq:xi0} by $\xi$ and integrate in $s$. Integrating by parts and using that $|\p_s\xi|=1$ we have
\begin{align}\label{eq:identity2}
\frac{1}{2}\p_t\int_0^1|\xi|^2 ds =\int_0^1|\xi|^2 ds-\int_0^1\sigma ds.
\end{align}
By (i) the left hand side is larger than or equal to zero. Thus the upper bound follows.\\

3. By (i) and (ii),  $\lim_{\tau\rightarrow\infty}\int_0^1|\xi(\tau,s)|^2ds$
 and $\lim_{\tau\rightarrow \infty}\int_0^1\sigma(\tau,s)ds$ exist and in the limit $\int_0^1|\xi(\infty,s)|^2ds-\int_0^1\sigma(\infty,s)ds\geq 0$. To see the limit is actually zero, we argue by contradiction. Suppose not, then there exist $\epsilon>0$ and $M_\epsilon>0$ such that $\int_0^1|\xi(\tau,s)|^2ds-\int_0^1\sigma(\tau,s)ds\geq \epsilon$ for all $\tau\geq M_\epsilon$. By \eqref{eq:identity2} for any $\tau_1>\tau_2\geq M_\epsilon$
\begin{align*}
\int_0^1|\xi(\tau_1,s)|^2ds-\int_0^1|\xi(\tau_2,s)|^2ds\geq 2\epsilon(\tau_1-\tau_2).
\end{align*}
By (i) the left hand side is bounded from above by $\frac{1}{4\pi^2}$. However, the right hand side goes to infinity as $\tau_1\rightarrow\infty$, which is a contradiction. Thus we have shown that the limit is zero.
\end{proof}

\vspace{5mm}

In the rest of the paper we will work with the normalized flow \eqref{eq:xi0}-\eqref{eq:sigma0}. In Section \ref{sec:local}, we show local well-posedness of the problem in some H\"older class. In Section \ref{sec:stat}, we explore the stationary solutions.  In Section \ref{sec:global}, we address global well-posedness and long time asymptotics of the solution for the initial data which are close to steady states, and show exponential decay of the solutions to a steady state. In Section \ref{sec:weak} we address the global solvability without restrictions on the initial data but in a generalized sense. We stress that there is a one-to-one correspondence between the normalized flow and the original gradient flow (Sections \ref{sec:grad} and \ref{sec:normalize_flow}, resp.). After the backward change of variable and renormalization, we can infer the well-posedness of the original flow and the asymptotics of the flow near the extinction time.

\subsection{Local well-posedness}\label{sec:local}
In this section we show the local well-posedness of the normalized flow \eqref{eq:xi0}--\eqref{eq:sigma0}.
First we introduce the function spaces we will work with.
Given $\alpha,\beta\in [0,1)$, $T\in (0,\infty)$ and $k\in \N\cup\{0\}$, let
\begin{align*}
C^{k+\alpha, \beta}([0,T]\times \Sf):=\{\xi:[0,T]\times\Sf\rightarrow \R^d: \|\xi\|_{k+\alpha,\beta}<\infty\},
\end{align*}
where
\begin{align*}
\|\xi\|_{k+\alpha,\beta}:=\sup_{t}\|\xi(t,\cdot)\|_{C^{k+\alpha}(\Sf)}+\sup_s\sum_{j=0}^k\|\p_s^j\xi(\cdot,s)\|_{C^{\beta}([0,T])}.
\end{align*}
Here we use $C^{k+\alpha}(\Sf)$ ($C^{\beta}([0,T])$) to denote the usual H\"older spaces for functions only depending on one variable. Similarly, let
\begin{align*}
C^{k+\alpha,1+\beta}([0,T]\times \Sf):=\{\xi:[0,T]\times \Sf\rightarrow \R^d: \|\xi\|_{k+\alpha,\beta}+\|\p_t\xi\|_{k+\alpha,\beta}<\infty\}.
\end{align*}

The local well-posedness result we want to prove in this section is as follows.

\begin{thm}\label{thm:wellposedness}
Given any initial datum $\xi_0\in C^{2+\alpha}(\Sf)$ with $|\p_s\xi_0(s)|=1$, $\int_0^1\xi_0(s)ds=0$ , there exists $T>0$, which depends on $\|\xi_0\|_{C^{2+\alpha}(\Sf)}$, such that  the Cauchy problem
\begin{equation}\label{eq:wellposed}
\begin{split}
\p_t\xi=\p_s(\sigma\p_s\xi)+\xi,\\
\p_{ss}\sigma-|\p_{ss}\xi|^2\sigma=-1,\\
\xi(0,s)=\xi_0(s)
\end{split}
\end{equation}
has a unique solution $\xi\in C^{2+\alpha,\alpha/2}([0,T]\times \Sf)$, $\p_t\xi\in C^{\alpha, \alpha/2}([0,T]\times \Sf)$.
\end{thm}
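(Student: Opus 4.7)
The strategy is to decouple the Lagrange multiplier $\sigma$ from the main equation, rewrite \eqref{eq:wellposed} as a quasilinear parabolic equation with nonlocal coefficients, and run a Banach fixed-point argument in a small ball of $C^{2+\alpha,\alpha/2}([0,T]\times\Sf)$.

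\emph{Step 1 (The elliptic map $\xi\mapsto\sigma[\xi]$).} First I would analyse the ODE $\partial_{ss}\sigma-|\partial_{ss}\xi|^2\sigma=-1$ on $\Sf$ for a single $\xi\in C^{2+\alpha}(\Sf)$ with $|\partial_s\xi|\equiv 1$ and $\int\xi=0$. The homogeneous equation $\sigma''-|\partial_{ss}\xi|^2\sigma=0$ has only the trivial periodic solution (at a positive maximum we would need $|\partial_{ss}\xi|^2\sigma\le 0$, at a negative minimum the reverse, and since $|\partial_{ss}\xi|$ cannot vanish identically for a closed curve with arc-length parametrization, this forces $\sigma\equiv 0$), so the Fredholm alternative gives a unique $\sigma\in C^{2+\alpha}(\Sf)$. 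Evaluating the equation at the global minimum yields $|\partial_{ss}\xi|^2\sigma\ge 1$ there, hence $\sigma>0$ everywhere, with a quantitative lower bound $\sigma\ge c_0(\|\partial_{ss}\xi\|_\infty)>0$. Standard elliptic Schauder estimates on $\Sf$ give $\|\sigma[\xi]\|_{C^{2+\alpha}}\le C(\|\xi\|_{C^{2+\alpha}})$, and subtracting the equations for $\sigma[\xi_1]$ and $\sigma[\xi_2]$ yields the Lipschitz estimate $\|\sigma[\xi_1]-\sigma[\xi_2]\|_{C^{2+\alpha}}\le C\|\xi_1-\xi_2\|_{C^{2+\alpha}}$ on bounded subsets.

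\emph{Step 2 (Reformulation and fixed point).} Expanding $\partial_s(\sigma\partial_s\xi)$, \eqref{eq:wellposed} becomes
\begin{equation*}
\partial_t\xi=\sigma[\xi]\,\partial_{ss}\xi+\partial_s\sigma[\xi]\,\partial_s\xi+\xi,
\end{equation*}
a quasilinear uniformly parabolic equation once $\sigma[\xi]\ge c_0>0$. Let $\xi^*(t,s):=\xi_0(s)$ be the trivial time-extension of the initial datum and, for $R,T>0$ to be chosen, set
\begin{equation*}
B_{R,T}:=\bigl\{\tilde\xi\in C^{2+\alpha,\alpha/2}([0,T]\times\Sf):\ \tilde\xi(0,\cdot)=\xi_0,\ \|\tilde\xi-\xi^*\|_{2+\alpha,\alpha/2}\le R\bigr\}.
\end{equation*}
Given $\tilde\xi\in B_{R,T}$, define $\tilde\sigma(t,s):=\sigma[\tilde\xi(t,\cdot)](s)$; by Step 1 applied pointwise in $t$, $\tilde\sigma\in C^{2+\alpha,\alpha/2}$ with $\tilde\sigma\ge c_0/2$ provided $R,T$ are small (since $\tilde\xi$ stays close to $\xi_0$). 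Then solve the linear Cauchy problem
\begin{equation*}
\partial_t\xi-\tilde\sigma\,\partial_{ss}\xi-\partial_s\tilde\sigma\,\partial_s\xi-\xi=0,\quad \xi(0,\cdot)=\xi_0,
\end{equation*}
which by the classical parabolic Schauder theory on $[0,T]\times\Sf$ admits a unique solution $\xi=:\Phi(\tilde\xi)\in C^{2+\alpha,1+\alpha/2}\subset C^{2+\alpha,\alpha/2}$ satisfying a quantitative bound in terms of $\|\tilde\sigma\|_{C^{\alpha,\alpha/2}}$, $c_0$, and $\|\xi_0\|_{C^{2+\alpha}}$.

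\emph{Step 3 (Self-mapping and contraction).} I would choose $R$ a fixed multiple of $\|\xi_0\|_{C^{2+\alpha}}$, then shrink $T$ so that (i) $\Phi(B_{R,T})\subset B_{R,T}$, using that $\xi(t,s)-\xi^*(t,s)=\int_0^t\partial_t\xi\,dt'$ has small $C^{2+\alpha,\alpha/2}$-norm when the Schauder bounds on $\partial_t\xi$ are controlled and $T$ is small, and (ii) for $\tilde\xi_1,\tilde\xi_2\in B_{R,T}$, the difference $w:=\Phi(\tilde\xi_1)-\Phi(\tilde\xi_2)$ satisfies a linear parabolic equation with zero initial data and forcing controlled, via Step 1, by $\|\tilde\xi_1-\tilde\xi_2\|_{2+\alpha,\alpha/2}$; Schauder estimates for $w$ combined with the vanishing initial data yield a contraction constant $\le CT^{\alpha/2}<1$ for $T$ small. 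Banach's fixed-point theorem produces a unique $\xi\in B_{R,T}$ solving \eqref{eq:wellposed}, and the regularity $\partial_t\xi\in C^{\alpha,\alpha/2}$ is read off from the equation itself.

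\emph{Expected obstacle.} The delicate point is that $\sigma[\xi]$ sees $\partial_{ss}\xi$, so the top-order coefficient of the parabolic equation depends on the same derivatives we are trying to control; consequently the fixed point must live in the full Schauder space $C^{2+\alpha,\alpha/2}$ and there is no room to run the contraction in a weaker norm. The Lipschitz estimate of Step 1 must therefore hold with no loss of derivative, and the smallness needed for contraction has to be harvested purely from the factor $T^{\alpha/2}$ supplied by the parabolic Schauder estimate with zero initial data. Verifying the self-mapping in the norm that also appears in the coefficients (rather than a weaker one) is the main technical point to execute carefully; everything else is routine given the ellipticity $\tilde\sigma\ge c_0/2$ secured in Step 1.
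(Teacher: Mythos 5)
Your proposal follows essentially the same route as the paper: solve the elliptic ODE for $\sigma$ given $\xi$, feed the resulting uniformly positive coefficient into a linear parabolic Cauchy problem, and close a Banach fixed-point argument in $C^{2+\alpha,\alpha/2}$ with the contraction constant harvested from a positive power of $T$. The one place where your write-up overstates what is available is precisely the point you flag as delicate: the Lipschitz estimate $\|\sigma[\xi_1]-\sigma[\xi_2]\|\le C\|\xi_1-\xi_2\|$ \emph{with no loss of H\"older exponent in time}. The time regularity of $\sigma[\xi(t,\cdot)]$ is limited by that of $|\p_{ss}\xi|^2$, which for $\xi\in C^{2+\alpha,\alpha/2}$ is only $C^{\alpha/2}$ in $t$; to convert this into a quantitative modulus $\|k(t_1,\cdot)-k(t_2,\cdot)\|_{C^{\alpha/2}(\Sf)}\lesssim|t_1-t_2|^{\alpha/4}$ one must trade away part of the spatial and temporal exponents, so the map $\xi\mapsto\sigma$ lands in $C^{2+\alpha/2,\alpha/4}$, not in $C^{2+\alpha,\alpha/2}$. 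This is exactly what the paper's Lemma \ref{lem:sigma} does, and it is harmless because parabolic Schauder theory then yields $\tilde\xi\in C^{2+\alpha/2,1+\alpha/4}$, after which an interpolation using the vanishing of $\tilde\xi-\xi_0$ at $t=0$ recovers the $C^{2+\alpha,\alpha/2}$ bound with the factor $T^{1-\alpha/4}$ that drives both the self-mapping and the contraction (Lemmas \ref{lem:tilde_eta} and \ref{lem:contraction}). So the correct bookkeeping is: accept the loss in the elliptic step, and recover the full norm only through the interpolation in $T$ --- your ``smallness purely from $T^{\alpha/2}$ with zero initial data'' is the right instinct, but it must be implemented through this exponent-splitting rather than through a no-loss Lipschitz bound. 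Your positivity argument for $\sigma$ (evaluation at extrema, plus non-vanishing of $\p_{ss}\xi$ forced by Wirtinger's inequality for a closed unit-speed curve) is a legitimate elementary substitute for the Green's function bounds of Lemma \ref{lem:lower_sigma}. As in the paper, the fixed-point argument gives uniqueness only within the chosen ball; neither argument as stated addresses uniqueness in the full class $C^{2+\alpha,\alpha/2}$, though this is a minor point.
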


\begin{rmk}
As in Remark \ref{rmk:constraint}, if $\xi$ is the solution emanating from $\xi_0$ provided by Theorem \ref{thm:wellposedness}, then $\int_0^1\xi(t,s)ds=0$ and $|\p_s\xi(t,s)|\equiv 1$ for all $t\in [0,T]$.
\end{rmk}

The proof of Theorem \ref{thm:wellposedness} is based on the Banach fix point theorem, where we show the solution map $\xi\mapsto \sigma_\xi\mapsto \tilde{\xi}$ is a contraction in the Banach space $C^{2+\alpha,\alpha/2}([0,T]\times\Sf)$. The proof is divided into several lemmas.

\begin{lem}\label{lem:sigma}
For any $\xi\in C^{2+\alpha, \alpha/2}([0,T]\times \Sf)$ such that $\|\p_{ss}\xi(t,\cdot)\|_{L^2(\Sf)}\neq 0$ for all $t\in [0,T]$, there exists a unique solution $\sigma\in C^{2+\alpha/2,\alpha/4}([0,T]\times \Sf)$ to the ODE
\begin{equation}\label{eq:eta_sigma}
\begin{split}
&\p_{ss}\sigma (t,s)-\sigma(t,s)|\p_{ss}\xi(t,s)|^2=-1.
\end{split}
\end{equation}
The solution satisfies the estimate
\begin{align*}
\|\sigma\|_{2+\alpha/2,\alpha/4}\leq C
\end{align*}
for some constant $C$ depending on $d$ and $\|\p_{ss}\xi\|_{\alpha,\alpha/2}$, which is uniformly bounded if $\|\p_{ss}\xi\|_{\alpha,\alpha/2}$ is uniformly bounded.
\end{lem}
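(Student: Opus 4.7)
My plan is to treat the problem at each fixed $t$ as a periodic linear ODE $-\p_{ss}\sigma + V\sigma = 1$ on $\Sf$, where $V := |\p_{ss}\xi|^2 \geq 0$ is not identically zero by hypothesis. For existence and uniqueness I would apply Lax--Milgram on $H^1_{\mathrm{per}}(\Sf)$: the bilinear form $a(u,v) = \int_0^1 (u'v' + Vuv)\, ds$ is coercive, because any $u$ with $a(u,u)=0$ must be constant (from $\|u'\|_{L^2}=0$) and then must vanish (from $\int V u^2 = 0$ together with $V\not\equiv 0$). Elliptic regularity promotes the weak solution to $H^2_{\mathrm{per}}$, and the strong maximum principle applied to $-\p_{ss}\sigma + V\sigma = 1 > 0$ with $V\geq 0$ yields $\sigma>0$.

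The substantial quantitative step is a uniform-in-$t$ $L^\infty$ bound on $\sigma$. Integrating the ODE gives $\int_0^1 V\sigma\, ds = 1$; testing against $\sigma$ gives $\|\p_s\sigma\|_{L^2}^2 + \int V\sigma^2\,ds = \bar\sigma$, where $\bar\sigma := \int_0^1 \sigma\, ds$. Splitting $\sigma = \bar\sigma + \tilde\sigma$ with $\int_0^1 \tilde\sigma\, ds = 0$, the 1D bound $\|\tilde\sigma\|_{L^\infty} \leq \|\p_s\sigma\|_{L^2}$ combined with $\bar\sigma\|V\|_{L^1} = 1 - \int V\tilde\sigma\,ds$ yields $\bar\sigma \leq \|V\|_{L^1}^{-1} + \|\p_s\sigma\|_{L^2}$; substituting this into $\|\p_s\sigma\|_{L^2}^2 \leq \bar\sigma$ and solving the resulting quadratic delivers
\begin{equation*}
\|\sigma\|_{L^\infty} \leq \bar\sigma + \|\p_s\sigma\|_{L^2} \leq C\bigl(1 + \|V\|_{L^1}^{-1}\bigr).
\end{equation*}
Since $\|V\|_{L^1} = \|\p_{ss}\xi(t,\cdot)\|_{L^2}^2$ is continuous in $t$ and bounded away from zero on the compact interval $[0,T]$ by hypothesis, this gives a $t$-uniform bound. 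With $\sigma\in L^\infty$ in hand and $V\in C^\alpha(\Sf)$ in $s$, a short Schauder-type bootstrap on $\p_{ss}\sigma = V\sigma - 1$ places $\sigma \in C^{2+\alpha}(\Sf) \subset C^{2+\alpha/2}(\Sf)$ uniformly in $t$.

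For the time regularity I would consider $\Sigma := \sigma(t_1,\cdot) - \sigma(t_2,\cdot)$, which satisfies $-\p_{ss}\Sigma + V(t_1)\Sigma = -(V(t_1)-V(t_2))\sigma(t_2)$. Since $V\in C^{\alpha/2}$ in $t$ (as a product of $\p_{ss}\xi$ with itself) and $\sigma$ is $L^\infty$-bounded, the right-hand side is $O(|t_1-t_2|^{\alpha/2})$ in $L^\infty$; rerunning the $L^\infty$ argument above for $\Sigma$ gives $\|\Sigma\|_{L^\infty} \leq C|t_1-t_2|^{\alpha/2}$, reading $\p_{ss}\Sigma$ off the ODE yields the same order bound in $L^\infty$, and the interpolation $\|\p_s f\|_{L^\infty}^2 \lesssim \|f\|_{L^\infty}\|\p_{ss}f\|_{L^\infty}$ on $\Sf$ handles the intermediate derivative. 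This $C^{\alpha/2}$-in-$t$ control on $\sigma,\p_s\sigma,\p_{ss}\sigma$ is strictly stronger than the required $C^{\alpha/4}$. The main obstacle is precisely the inverse dependence of the $L^\infty$ bound on $\|\p_{ss}\xi(t,\cdot)\|_{L^2}$: without the non-vanishing-curvature hypothesis providing a uniform lower bound on this quantity on $[0,T]$, the Green's function of $-\p_{ss}+V$ would degenerate and the whole estimate would fail; the remainder of the argument is routine ODE and Schauder bookkeeping.
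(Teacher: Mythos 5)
Your proof is correct and in fact delivers a slightly stronger conclusion than the lemma states, but it routes the two key quantitative steps differently from the paper. For the uniform bound on $\sigma$ the paper simply invokes the two-sided Green's function estimate for the periodic Schr\"odinger operator $\p_{ss}-|\p_{ss}\xi|^2$ (its Lemma giving $e^{-\rho/2}/\rho\leq\sigma\leq 1+1/\rho$, borrowed from Preston), whereas you rederive the upper bound $\|\sigma\|_{L^\infty}\leq C\bigl(1+\|\p_{ss}\xi(t,\cdot)\|_{L^2}^{-2}\bigr)$ by hand from the identities $\int_0^1 V\sigma\,ds=1$ and $\|\p_s\sigma\|_{L^2}^2+\int_0^1 V\sigma^2\,ds=\bar\sigma$; this is elementary and self-contained and recovers the same dependence on $\rho=\|V\|_{L^1}$. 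For the time regularity the paper writes the equation for $\sigma(t_1,\cdot)-\sigma(t_2,\cdot)$, applies a spatial Schauder estimate to it, and then interpolates $\|k(t_1,\cdot)-k(t_2,\cdot)\|_{C^{\alpha/2}(\Sf)}\lesssim |t_1-t_2|^{\alpha/4}$; that interpolation is the sole reason the stated exponents drop to $2+\alpha/2$ in space and $\alpha/4$ in time. You instead control the difference purely at the $L^\infty$ level (energy estimate for $\Sigma$, read $\p_{ss}\Sigma$ off the ODE, interpolate the middle derivative), which avoids the spatial H\"older seminorm of the time difference entirely and so retains the full $C^{\alpha/2}$ modulus in $t$ for $\sigma$, $\p_s\sigma$, $\p_{ss}\sigma$; since the paper's norm only measures time regularity of $\p_s^j\sigma$ pointwise in $s$, this implies the claimed $C^{2+\alpha/2,\alpha/4}$ membership with room to spare. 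Two minor remarks: strict positivity of $a(u,u)$ is definiteness rather than coercivity, so you should either add the standard compactness argument or observe that your quantitative estimates already yield $\|u\|_{H^1}^2\leq C(V)\,a(u,u)$; and, as you rightly flag, the constant necessarily also depends on $\inf_{t}\|\p_{ss}\xi(t,\cdot)\|_{L^2}$, a dependence left implicit in the paper's statement but harmless in its application, where this quantity is bounded below by $\delta_0$.
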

\begin{proof}
Let $k(t,s):=|\p_{ss}\xi(t,s)|$. Since $\|k(t,\cdot)\|_{L^2}\neq 0$, the inhomogeneous equation has a unique solution $\sigma(t,s)$ for each $t$. Furthermore, by the regularity theory for the elliptic equations $\sigma(t,\cdot)\in C^{2+\alpha}(\Sf)$ for each $t\in [0,T]$ and one has the estimate
\begin{align}\label{eq:est000}
\|\sigma\|_{2+\alpha,0}\leq C\left(\|k\|_{\alpha,0}+1\right)
\end{align}
for some universal $C>0$ (cf. Lemma \ref{lem:lower_sigma} below for the estimate of $\|\sigma\|_{L^\infty}$).

To derive the regularity in $t$ we consider the equation for $\sigma(t_1,\cdot)-\sigma(t_2,\cdot)$ for any $0\leq t_1<t_2\leq T$:
\begin{align*}
&\quad\p_{ss}\left(\sigma(t_1,s)-\sigma(t_2,s)\right)-k(t_1,s)^2\left(\sigma(t_1,s)-\sigma(t_2,s)\right)\\
&=\sigma(t_2,s)\left(k^2(t_1,s)-k^2(t_2,s)\right).
\end{align*}
By the Schauder estimate,
\begin{align*}
\left\|\sigma(t_1,s)-\sigma(t_2,s)\right\|_{2+\alpha/2,0}\leq \tilde{C}\|k(t_1,s)-k(t_2,s)\|_{\alpha/2,0},
\end{align*}
where $\tilde{C}$ is a constant depending on $\|\sigma\|_{\alpha/2,0}$ and $\|k\|_{\alpha/2,0}$, and it is uniformly bounded if $\|k\|_{\alpha/2,0}$ is uniformly bounded (recall the estimate \eqref{eq:est000} for $\sigma$). Here we have used
 $$\|\sigma(t_2,s)\left(k^2(t_1,s)-k^2(t_2,s)\right)\|_{\alpha/2,0}\leq 6\|\sigma\|_{\alpha/2,0}\|k\|_{\alpha/2,0}\|k(t_1,\cdot)-k(t_2,\cdot)\|_{\alpha/2,0}$$ to estimate the right hand side.
Since $k(t,s)\in C^{\alpha,\alpha/2}([0,T]\times \Sf)$, it is not hard to see that
\begin{align*}
\|k(t_1,s)-k(t_2,s)\|_{\alpha/2,0}\leq C\|k\|_{\alpha,\alpha/2}|t_1-t_2|^{\alpha/2-\alpha/4}
\end{align*}
for some universal $C>0$.
Combining the last two inequalities we obtain the desired estimate of $\sigma(t,s)$.
\end{proof}

Next we state the pointwise upper and lower bound on $\sigma(t,\cdot)$ in terms of $\|k(t,\cdot)\|_{L^2(\Sf)}$. This is a direct consequence of the upper and lower bound of the Green's function of the Schr\"odinger operator $\p_{ss}-k^2$ with the periodic boundary conditions (cf. Proposition A.3 in \cite{Preston-2012}).

\begin{lem}\label{lem:lower_sigma}
Let $\xi$ and $\sigma$ be the same as in Lemma \ref{lem:sigma}. Then
\begin{align*}
\frac{e^{-\rho/2}}{\rho}\leq \sigma(t,s)\leq 1+\frac{1}{\rho},\text{ where }\rho=\rho(t)=\int_0^1|\p_{ss}\xi(t,s)|^2 ds
\end{align*}
for all $s\in \Sf$.
\end{lem}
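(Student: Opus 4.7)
The plan is to read off both bounds from pointwise Green's function estimates for the periodic Schr\"odinger operator $L_t := -\partial_{ss} + k(t,\cdot)^2$ on $\Sf$, where $k(t,s) := |\partial_{ss}\xi(t,s)|$, exactly as the statement of the lemma suggests.

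First, I would record that $\rho(t) = \|k(t,\cdot)\|_{L^2(\Sf)}^2 > 0$ makes the quadratic form $u \mapsto \int_0^1((u')^2 + k^2 u^2)\,ds$ strictly positive on periodic functions, so $L_t$ is invertible on its natural domain. Rewriting \eqref{eq:eta_sigma} as $L_t \sigma = 1$ then yields the representation
\[
\sigma(t,s) \;=\; \int_0^1 G(t;s,y)\, dy,
\]
where $G(t;\cdot,y)$ is the periodic Green's function of $L_t$. Since the zeroth-order coefficient $k^2$ is non-negative and the right-hand side $\delta_y$ is positive in the sense of distributions, the maximum principle gives $G > 0$.

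Second, I would invoke Proposition A.3 of \cite{Preston-2012}, which provides the two-sided bound
\[
\frac{e^{-\rho(t)/2}}{\rho(t)} \;\leq\; G(t;s,y) \;\leq\; 1 + \frac{1}{\rho(t)}, \qquad s,y \in \Sf.
\]
Because $|\Sf| = 1$, integrating these bounds with respect to $y$ transfers them verbatim onto $\sigma(t,s)$, which is precisely the claim.

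The only real work is the appeal to Preston's estimate. The upper bound admits a comparatively soft proof: representing $G$ on a fundamental domain in terms of two linearly independent positive solutions of $u'' = k^2 u$ (suitably periodized), one controls $G$ from above by the $L^\infty$ norms of these solutions and a uniform lower bound on a Wronskian-type constant, with the constant $1$ accounting for the regime of large $k$ and the term $1/\rho$ dominating when $\rho$ is small. The more delicate point is the lower bound, where the exponential factor $e^{-\rho/2}$ encodes the worst-case growth across an interval of length $1$ of the fundamental solutions of $u'' = k^2 u$, controlled by a Gr\"onwall-type inequality in terms of $\int_0^1 k^2 = \rho$. Since both bounds are self-contained calculations carried out in Preston's appendix, I would cite them directly rather than reproduce them.
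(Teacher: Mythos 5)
Your proposal is correct and takes essentially the same route as the paper: the paper's proof consists precisely of observing that the bounds follow from the two-sided Green's function estimate for the periodic Schr\"odinger operator $\p_{ss}-k^2$ in Proposition A.3 of \cite{Preston-2012}, which is exactly your representation $\sigma(t,s)=\int_0^1 G(t;s,y)\,dy$ integrated over the unit-length circle. Your additional remarks on positivity of $G$ and the mechanism behind Preston's bounds are a harmless (and accurate) elaboration of what the paper leaves implicit.
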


Fix an initial datum $\xi_0(s)$ as in Theorem \ref{thm:wellposedness}. Firstly we note that $\int_0^1|\p_{ss}\xi_0(s)|^2 ds \geq 4\pi^2$. To see this let $Z:=\p_s\xi_0$. From the assumptions on $\xi_0$ and using periodicity we have $|Z(s)|\equiv 1$ and $\int_0^1Z=0$. Then the claimed inequality follows from the Wirtinger's inequality $\int_0^1|\p_sZ|^2 ds\geq 4\pi^2\int_0^1|Z|^2ds =4\pi^2$.  Next we let
$\delta_0:=\frac{1}{2}\|\p_{ss}\xi_0\|_{L^2(\Sf)}\geq \pi$ and let
\begin{align*}
M_{\xi_0}:=\{\xi\in C^{2+\alpha,\alpha/2}([0,T]\times\Sf):&\ \|\xi(t,s)\|_{2+\alpha,\alpha/2}\leq \|\xi_0\|_{2+\alpha}+\delta_0,\\
&\text{ and }\|\p_{ss}\xi(t,\cdot)-\p_{ss}\xi_0(\cdot)\|_{L^2(\Sf)}\leq \delta_0\}.
\end{align*}
It is not hard to see that $M_{\xi_0}$ is a closed convex subset in $C^{2+\alpha,\alpha/2}([0,T]\times \Sf)$. 

\begin{lem}\label{lem:tilde_eta}
Given any $\xi\in M_{\xi_0}$, let $\sigma=\sigma_\xi$ be as in Lemma \ref{lem:sigma}. Then there is $T_0=T_0(\|\xi_0\|_{2+\alpha})$ sufficiently small, such that for any $T\in (0,T_0]$ there exists a unique solution $\tilde{\xi}(t,s)\in M_{\xi_0}$ to the initial value problem
\begin{equation}\label{eq:sigma_eta}
\begin{split}
\p_t\tilde{\xi}=\p_s(\sigma\p_s\tilde{\xi})+\tilde{\xi} \text{ in } (0,T)\times \Sf,\quad \tilde{\xi}\big|_{t=0}=\xi_0.
\end{split}
\end{equation}
\end{lem}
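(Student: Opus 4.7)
The plan is to regard \eqref{eq:sigma_eta}, for fixed $\xi\in M_{\xi_0}$, as a linear Cauchy problem for $\tilde\xi$ with a \emph{given} coefficient $\sigma=\sigma_\xi$, solve it by classical parabolic Schauder theory, and verify that the resulting $\tilde\xi$ lies in $M_{\xi_0}$ once $T$ is small.

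\emph{Step 1 (structure of the linear problem).} For any $\xi\in M_{\xi_0}$, the triangle inequality gives $\|\p_{ss}\xi(t,\cdot)\|_{L^2(\Sf)}\in[\delta_0,3\delta_0]$ for every $t\in[0,T]$, so Lemma~\ref{lem:sigma} is applicable and produces a coefficient $\sigma\in C^{2+\alpha/2,\alpha/4}([0,T]\times\Sf)$ with $\|\sigma\|_{2+\alpha/2,\alpha/4}\leq K_1$, while Lemma~\ref{lem:lower_sigma} supplies a strictly positive lower bound $\sigma\geq\sigma_{\min}>0$; both $K_1$ and $\sigma_{\min}$ depend only on $\|\xi_0\|_{C^{2+\alpha}(\Sf)}$ and $\delta_0$, hence are uniform over $M_{\xi_0}$. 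Expanding $\p_s(\sigma\p_s\tilde\xi)$, equation \eqref{eq:sigma_eta} reads
\begin{equation*}
\p_t\tilde\xi=\sigma\,\p_{ss}\tilde\xi+(\p_s\sigma)\,\p_s\tilde\xi+\tilde\xi,\qquad \tilde\xi(0,\cdot)=\xi_0,
\end{equation*}
a linear, componentwise-decoupled, uniformly parabolic system on $\Sf$ with Hölder-regular coefficients controlled uniformly over $\xi\in M_{\xi_0}$.

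\emph{Step 2 (parabolic Schauder existence).} I would then invoke classical parabolic Schauder theory on the closed loop $\Sf$ to obtain a unique classical solution $\tilde\xi$ together with an a priori estimate of the form
\begin{equation*}
\|\tilde\xi\|_{2+\alpha,\alpha/2}+\|\p_t\tilde\xi\|_{\alpha,\alpha/2}\leq K_2\,\|\xi_0\|_{C^{2+\alpha}(\Sf)},
\end{equation*}
with $K_2=K_2(K_1,\sigma_{\min})$ independent of $T\in(0,1]$. The only mildly subtle point here is that $\sigma$ has only $C^{\alpha/4}$ time regularity; this is compensated by its much higher spatial regularity ($C^{2+\alpha/2}$ in $s$, hence in particular $C^\alpha$), together with the fact that the initial datum $\xi_0$ already lies in $C^{2+\alpha}$, which is what is required to recover the full spatial $C^{2+\alpha}$ regularity of $\tilde\xi$.

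\emph{Step 3 (landing inside $M_{\xi_0}$).} To verify the two defining constraints of $M_{\xi_0}$, I would exploit that the difference $w:=\tilde\xi-\xi_0$ vanishes at $t=0$ and solves a parabolic Cauchy problem with zero initial data and a bounded, $C^\alpha$-regular forcing $F:=\sigma\,\p_{ss}\xi_0+(\p_s\sigma)\,\p_s\xi_0+\xi_0$. A standard Duhamel-type Schauder estimate then yields
\begin{equation*}
\|\tilde\xi-\xi_0\|_{2+\alpha,\alpha/2}+\sup_{t\in[0,T]}\|\p_{ss}\tilde\xi(t,\cdot)-\p_{ss}\xi_0\|_{L^2(\Sf)}\leq C\,T^\kappa
\end{equation*}
for some $\kappa>0$, where $C$ depends on $\|\xi_0\|_{C^{2+\alpha}(\Sf)}$, $K_1$ and $\sigma_{\min}$. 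Choosing $T_0=T_0(\|\xi_0\|_{C^{2+\alpha}(\Sf)},\delta_0)$ so that $CT_0^\kappa\leq\delta_0$ then makes both defining inequalities of $M_{\xi_0}$ satisfied, giving $\tilde\xi\in M_{\xi_0}$ for every $T\in(0,T_0]$. Uniqueness in this class is inherited from the linear Schauder theory.

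\emph{Main obstacle.} The delicate point is producing the \emph{strict} bound $\|\tilde\xi\|_{2+\alpha,\alpha/2}\leq\|\xi_0\|_{C^{2+\alpha}(\Sf)}+\delta_0$, because a naive Schauder estimate only yields $K_2\|\xi_0\|_{C^{2+\alpha}(\Sf)}$ with $K_2\gg 1$. The extra $T^\kappa$-smallness of the excess must be extracted from the fact that $w=\tilde\xi-\xi_0$ starts from zero, combined with the uniformity of the coefficient bounds over $M_{\xi_0}$ provided by Lemmas~\ref{lem:sigma} and \ref{lem:lower_sigma}; this is the standard short-time parabolic mechanism underlying the Banach fixed-point scheme announced before the lemma.
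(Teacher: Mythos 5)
Your proposal is correct and follows essentially the same route as the paper: uniform coefficient bounds and uniform parabolicity from Lemmas \ref{lem:sigma} and \ref{lem:lower_sigma}, classical linear parabolic Schauder solvability for the frozen-coefficient problem, and extraction of a positive power of $T$ from the fact that the solution starts at $\xi_0$ in order to land back in $M_{\xi_0}$. The paper obtains this last smallness by interpolating the $C^{2+\alpha,\alpha/2}$ norm against the stronger norm $\|\tilde{\xi}\|_{2+\alpha/2,1+\alpha/4}$ (which controls $\p_t\tilde{\xi}$), rather than via your Duhamel-type estimate for $w=\tilde{\xi}-\xi_0$, but this is the same short-time parabolic mechanism.
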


\begin{proof}
Given any $\xi\in M_{\xi_0}$, by the triangle inequality $\delta_0\leq \|\p_{ss}\xi(t,\cdot)\|_{L^2(\Sf)}\leq 3\delta_0$ for any $t\in [0,T]$. Thus by Lemma \ref{lem:sigma} there exists a unique solution $\sigma=\sigma_\xi$ to \eqref{eq:eta_sigma} in the class $C^{2+\alpha/2, \alpha/4}([0,T]\times\Sf)$. Moreover, it satisfies $\|\sigma\|_{2+\alpha/2,\alpha/4}\leq C$, where $C$ depends on $\|\xi_0\|_{2+\alpha}$ since $\xi\in M_{\xi_0}$.
From Lemma \ref{lem:lower_sigma} we have that $\sigma\geq c_0>0$ for some $c_0$ depending on $\|\xi_0\|_{2+\alpha}$.  Thus the equation \eqref{eq:sigma_eta} is parabolic. By the classical well-posedness results for the parabolic equations, there is a unique solution $\tilde{\xi}\in C^{2+\alpha/2,1+\alpha/4}([0,T]\times\Sf)$  to the equation \eqref{eq:sigma_eta}, and $\|\tilde{\xi}\|_{2+\alpha/2,1+\alpha/4}\leq \tilde{C}$, where $\tilde{C}$ depends on $\|\xi_0\|_{2+\alpha}$ (cf. Section 9.2 in \cite{Kry96}). 

Next we claim there is a small enough $T_0>0$ depending on $\|\xi_0\|_{2+\alpha}$, such that $\tilde{\xi}\in M_{\xi_0}$ for any $T\in (0,T_0]$. Indeed, the definition of the H\"older class and an interpolation yield that
\begin{align*}
\|\tilde{\xi}\|_{2+\alpha,\alpha/2}\leq \|\xi_0\|_{2+\alpha}+ 2T^{1-\alpha/4}\|\tilde{\xi}\|_{2+\alpha/2,1+\alpha/4}\leq \|\xi_0\|_{2+\alpha}+ 2T^{1-\alpha/4}\tilde{C}.
\end{align*}
By the H\"older regularity of the solution,
$\|\p_{ss}\tilde{\xi}(t,\cdot)-\p_{ss}\xi_0(\cdot)\|_{L^2(\Sf)}\leq 2\tilde{C}t^{\alpha/2}$ for any $t\in [0,T]$. The claim then follows by taking $T_0$ sufficiently small depending on $\|\xi_0\|_{2+\alpha}$.
\end{proof}

\begin{lem}\label{lem:contraction}
Let $\xi, \tilde{\xi}\in M_{\xi_0}$ be as in Lemma \ref{lem:tilde_eta}. Then there is $T>0$ sufficiently small depending on $\|\xi_0\|_{2+\alpha}$, such that the mapping $\xi\mapsto \tilde{\xi}$ is a contraction.
\end{lem}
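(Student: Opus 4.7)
The plan is to mimic the final interpolation argument in the proof of Lemma~\ref{lem:tilde_eta}, applied to the difference $w := \tilde{\xi}_1 - \tilde{\xi}_2$, which now has \emph{zero} initial data. Fix $\xi_1,\xi_2\in M_{\xi_0}$ and let $\sigma_i := \sigma_{\xi_i}$, $\tilde{\xi}_i$ be produced by Lemmas~\ref{lem:sigma} and \ref{lem:tilde_eta}. Throughout, all constants $C_j$ will depend only on $\|\xi_0\|_{C^{2+\alpha}}$, since $M_{\xi_0}$ gives uniform control of $\|\xi_i\|_{2+\alpha,\alpha/2}$, Lemmas~\ref{lem:sigma} and~\ref{lem:lower_sigma} provide uniform bounds on $\|\sigma_i\|_{2+\alpha/2,\alpha/4}$ together with a uniform lower bound $\sigma_i\geq c_0>0$, and Lemma~\ref{lem:tilde_eta} provides a uniform bound on $\|\tilde{\xi}_i\|_{2+\alpha/2,1+\alpha/4}$.

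\textbf{Step 1 (elliptic stability).} Subtracting the two $\sigma$-equations gives the linear ODE
\begin{equation*}
\p_{ss}(\sigma_1-\sigma_2) - |\p_{ss}\xi_1|^2(\sigma_1-\sigma_2) = \bigl(|\p_{ss}\xi_1|^2-|\p_{ss}\xi_2|^2\bigr)\sigma_2.
\end{equation*}
Repeating the Schauder argument in the proof of Lemma~\ref{lem:sigma} (in both variables) and using the uniform lower bound on $\int_0^1|\p_{ss}\xi_1|^2\,ds\geq \delta_0^2$, I obtain
\begin{equation*}
\|\sigma_1-\sigma_2\|_{2+\alpha/2,\alpha/4}\leq C_1\,\|\xi_1-\xi_2\|_{2+\alpha,\alpha/2}.
\end{equation*}

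\textbf{Step 2 (parabolic stability).} The difference $w=\tilde{\xi}_1-\tilde{\xi}_2$ solves the linear parabolic Cauchy problem
\begin{equation*}
\p_t w = \p_s(\sigma_1\p_s w)+w+\p_s\bigl((\sigma_1-\sigma_2)\p_s\tilde{\xi}_2\bigr),\qquad w(0,\cdot)=0.
\end{equation*}
By Lemma~\ref{lem:lower_sigma} the equation is uniformly parabolic with coefficients controlled in $C^{\alpha/2,\alpha/4}$, and the source lies in $C^{\alpha/2,\alpha/4}$ with norm $\leq C\,\|\sigma_1-\sigma_2\|_{1+\alpha/2,\alpha/4}\cdot\|\tilde{\xi}_2\|_{2+\alpha/2,1+\alpha/4}$. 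The classical Schauder theory (Section~9.2 in \cite{Kry96}) for zero initial data then yields, combined with Step~1,
\begin{equation*}
\|w\|_{2+\alpha/2,1+\alpha/4}\leq C_2\,\|\xi_1-\xi_2\|_{2+\alpha,\alpha/2}.
\end{equation*}

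\textbf{Step 3 (upgrade via vanishing initial data).} Since $w(0,\cdot)\equiv 0$, the interpolation argument used at the end of Lemma~\ref{lem:tilde_eta} (writing $w(t,\cdot)=\int_0^t\p_t w(\tau,\cdot)\,d\tau$ and trading $C^{1+\alpha/4}$ time regularity for the extra $\alpha/2$ in space at the cost of a $T^{1-\alpha/4}$ factor) gives
\begin{equation*}
\|w\|_{2+\alpha,\alpha/2}\leq 2T^{1-\alpha/4}\,\|w\|_{2+\alpha/2,1+\alpha/4}\leq 2C_2 T^{1-\alpha/4}\,\|\xi_1-\xi_2\|_{2+\alpha,\alpha/2}.
\end{equation*}
Shrinking $T\leq T_0$ so that $2C_2 T^{1-\alpha/4}\leq 1/2$ produces a contraction on $M_{\xi_0}$.

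The main obstacle is Step~2: because $\sigma_1$ only inherits the lower time regularity $C^{\alpha/4}$ from Lemma~\ref{lem:sigma}, the parabolic estimate cannot directly close in the native norm $C^{2+\alpha,\alpha/2}$ of $M_{\xi_0}$, and one must pass through the auxiliary space $C^{2+\alpha/2,1+\alpha/4}$. It is essential that all constants $C_1, C_2$ are uniform on $M_{\xi_0}$ (not on $T$), so that the final $T^{1-\alpha/4}$ genuinely provides smallness; this uniformity follows from the built-in bounds of $M_{\xi_0}$ combined with Lemmas~\ref{lem:sigma}--\ref{lem:tilde_eta}.
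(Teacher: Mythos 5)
Your Steps 1 and 2 reproduce the paper's argument essentially verbatim: the same linear ODE for $\sigma_1-\sigma_2$ with the same stability estimate, and the same linear parabolic Cauchy problem for $w=\tilde{\xi}_1-\tilde{\xi}_2$ with vanishing initial data. The genuine gap is in Step 3. The claimed inequality
\begin{equation*}
\|w\|_{2+\alpha,\alpha/2}\leq 2T^{1-\alpha/4}\,\|w\|_{2+\alpha/2,1+\alpha/4}
\end{equation*}
cannot hold: the left-hand side contains $\sup_t[\p_{ss}w(t,\cdot)]_{C^{\alpha}(\Sf)}$, whereas the right-hand side controls $\p_{ss}w(t,\cdot)$ only in $C^{\alpha/2}(\Sf)$, and no amount of time regularity or smallness of $T$ can raise a \emph{spatial} H\"older exponent from $\alpha/2$ to $\alpha$. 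The device ``$w(0,\cdot)=0$, hence $\|w(t,\cdot)\|_{C^{2+\alpha/2}}\leq T\sup_t\|\p_t w(t,\cdot)\|_{C^{2+\alpha/2}}$, then interpolate'' yields smallness of $[\p_{ss}w(t,\cdot)]_{C^{\beta}}$ only for $\beta\leq\alpha/2$; to reach $\beta=\alpha$ the interpolation $[g]_{C^{\alpha}}\leq C\|g\|_{L^\infty}^{1-\theta}[g]_{C^{\beta_1}}^{\theta}$ needs a top exponent $\beta_1\geq\alpha$, which your auxiliary space $C^{2+\alpha/2,1+\alpha/4}$ does not supply (a function whose second derivative behaves like $|s|^{\alpha/2}$ is small in $C^{2+\alpha/2}$ yet has infinite $C^{2+\alpha}$ norm). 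This is exactly the point where the paper's proof differs from yours: there the Schauder estimate for $\zeta=w$ is pushed one spatial derivative higher, $\|\zeta\|_{3+\alpha/2,1+\alpha/4}\leq C\|\sigma_1-\sigma_2\|_{2+\alpha/2,\alpha/4}$, so that the subsequent interpolation down to $C^{2+\alpha,\alpha/2}$ has a spatial endpoint $3+\alpha/2>2+\alpha$ to interpolate against, and only then does the factor $T^{1-\alpha/4}$ do its job.

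To close your argument you must either establish the higher estimate $\|w\|_{3+\alpha/2,1+\alpha/4}\lesssim\|\xi_1-\xi_2\|_{2+\alpha,\alpha/2}$ as the paper does (exploiting that the equation for $w$ is linear, with coefficient $\sigma_1\in C^{2+\alpha/2}$ in space, and tracking the extra spatial derivative through the forcing term $\p_s\bigl((\sigma_1-\sigma_2)\p_s\tilde{\xi}_2\bigr)$), or else set up the Banach fixed point on $M_{\xi_0}$ with respect to a metric strictly weaker than the one your Step 2 estimate produces, so that the interpolation in Step 3 genuinely has room on the spatial side. As written, Step 3 does not establish a contraction in $C^{2+\alpha,\alpha/2}$.
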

\begin{proof}
Given $\xi_1, \xi_2\in M_{\xi_0}$, let $\sigma_1$ and $\sigma_2$ be the solutions to \eqref{eq:eta_sigma} with respect to $\xi_1$ and $\xi_2$ correspondingly. Let $k_i:=|\p_{ss}\xi_i|$, $i=1,2$. Then $\sigma_1-\sigma_2$ satisfies the equation
\begin{align*}
\p_{ss}(\sigma_1-\sigma_2)-k_1^2(\sigma_1-\sigma_2)=\sigma_2(k_1^2-k_2^2).
\end{align*}
 By the similar arguments as in Lemma \ref{lem:sigma} (with slightly more involved estimates when dealing with the regularity in time due to the more complicated right hand side) we have
\begin{align}\label{eq:contract_1}
\|\sigma_1-\sigma_2\|_{2+\alpha/2,\alpha/4}\leq C\|\xi_1-\xi_2\|_{2+\alpha,\alpha/2}
\end{align}
for some $C=C(\|\xi_0\|_{2+\alpha})$.

Let $\tilde{\xi}_1, \tilde{\xi}_2$ be the solutions to \eqref{eq:sigma_eta} with respect to $\sigma_1, \sigma_2$ correspondingly. Then $\zeta:=\tilde{\xi}_1-\tilde{\xi}_2$ satisfies the equation
\begin{align*}
\p_t\zeta=\p_s(\sigma_1\p_s\zeta)+\zeta+\p_s\left((\sigma_1-\sigma_2)\p_s\tilde{\xi}_2\right),\quad \zeta(0,s)=0.
\end{align*}
By the parabolic Schauder estimate we have
\begin{align}\label{eq:contract_2}
\|\zeta\|_{3+\alpha/2,1+\alpha/4}\leq C\|\sigma_1-\sigma_2\|_{2+\alpha/2,\alpha/4}
\end{align}
for some $C=C(\|\xi_0\|_{2+\alpha})$. An interpolation together with \eqref{eq:contract_2} and \eqref{eq:contract_1}  yields
\begin{align*}
\|\zeta\|_{2+\alpha,\alpha/2}&\leq CT^{1-\alpha/4}\|\zeta\|_{3+\alpha/2,1+\alpha/4}\\
&\leq CT^{1-\alpha/4}\|\sigma_1-\sigma_2\|_{2+\alpha/2,\alpha/4}\\
&\leq CT^{1-\alpha/4}\|\xi_1-\xi_2\|_{2+\alpha,\alpha/2}.
\end{align*}
Here $C$ might be different from line to line but all depend on $\|\xi_0\|_{2+\alpha}$. Choosing $T$ to be sufficiently small depending on $\|\xi_0\|_{2+\alpha}$ we obtain
\begin{align*}
\|\tilde{\xi}_1-\tilde{\xi}_2\|_{2+\alpha,\alpha/2}=\|\zeta\|_{2+\alpha,\alpha/2}\leq \frac{1}{2}\|\xi_1-\xi_2\|_{2+\alpha,\alpha/2},
\end{align*}
which completes the proof.
\end{proof}

In the end we provide a proof for the local-posedness of our problem \eqref{eq:wellposed}.

\begin{proof}[Proof of Theorem \ref{thm:wellposedness}]
By Lemma \ref{lem:contraction} the mapping $\xi\mapsto \tilde{\xi}$ is a contraction on $M_{\xi_0}$ provided $T$ is sufficiently small depending on $\xi_0$. By the Schauder fixed point theorem, there is a unique function $\xi\in M_{\xi_0}$ with $\xi=\tilde{\xi}$. It is not hard to see that such fixed point $\xi$ is a solution to the initial value problem \eqref{eq:wellposed}. The solution is H\"older $\alpha/2$ continuous in $t$ up to $t=0$. The regularity of $\p_t\xi$ for $t>0$ follows immediately from the interior regularity of the parabolic equation.
\end{proof}

\subsection{Stationary solutions to the normalized flow} \label{sec:stat}
Our goal is to study the global well-posedness of the normalized equation \eqref{eq:xi0}--\eqref{eq:sigma0}, and the long time asymptotics of the solution. Before that we investigate the stationary solutions to the normalized equation, and show in this case the Lagrange multiplier $\sigma$ satisfies an ODE which has a first integral. Just like as for the conventional curve-shortening flow \cite{AL86}, the stationary solutions are not necessarily circles in our case. However we will show that the circle (with $\sigma \equiv \frac{1}{4\pi^2}$) is the only solution to the ODE if one assumes that the curve is simple and $\int_0^1\sigma ds$ satisfies a lower bound $\int_0^1\sigma ds \geq \frac{27}{32}\frac{1}{4\pi^2}$.\\

We start by recalling the stationary equation $\xi:\mathcal S^1\rightarrow \R^d$
\begin{equation}\label{eq:stationary}
\begin{split}
\p_s(\sigma\p_s\xi)+ \xi&=0,\ |\p_s\xi|=1.
\end{split}
\end{equation}

\begin{prop}\label{prop:stat}
Let $\xi\in C^2(\Sf;\R^d)$ be a solution to \eqref{eq:stationary}. Then $\xi$ is a plane curve whose curvature satisfies $k(s)>0$ for all $s$.
\end{prop}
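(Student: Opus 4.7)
My plan is to write the stationary equation as a relation expressing $\xi$ in terms of $T:=\p_s\xi$ and $\p_s T$, differentiate once more to pin $\p_{ss}T$ down to the same span, and then use the bivector $T\wedge\p_s T\in\Lambda^2\R^d$ to extract a first integral which forces $\xi$ into a fixed $2$-plane. The equation \eqref{eq:stationary} reads $(\p_s\sigma)T+\sigma\p_s T+\xi=0$, giving $\xi=-(\p_s\sigma)T-\sigma\p_s T$; one more differentiation and comparison with $\p_s\xi=T$ produces
\[\sigma\,\p_{ss}T \;=\; -(1+\p_{ss}\sigma)T - 2(\p_s\sigma)\p_s T.\]
Dividing by $\sigma>0$ shows $\p_{ss}T\in\mathrm{span}(T,\p_s T)$ at every point. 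The positivity of $\sigma$ follows from the maximum principle applied to $\p_{ss}\sigma-|\p_{ss}\xi|^2\sigma=-1$, which is itself a consequence of \eqref{eq:stationary} and $|\p_s\xi|\equiv 1$ (as in Lemma~\ref{lem:lower_sigma}); note $\p_{ss}\xi\not\equiv 0$ since otherwise $\xi$ would be an unbounded straight line.

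Now set $V(s):=T(s)\wedge\p_s T(s)$. Using $\p_s T\wedge\p_s T=0$ together with the formula above,
\[\p_s V \;=\; T\wedge\p_{ss}T \;=\; -\frac{2\p_s\sigma}{\sigma}\,V,\]
which is equivalent to $\p_s(\sigma^2 V)=0$. Hence $\sigma(s)^2V(s)\equiv\sigma(0)^2V(0)$, so either $V\equiv 0$ or $V$ is nowhere zero. In the latter case $V(s)$ is a nonzero scalar multiple of $V(0)$, and two proportional nonzero simple bivectors represent the same $2$-plane, so $\mathrm{span}(T(s),\p_s T(s))$ is the same fixed plane $\Pi:=\mathrm{span}(T(0),\p_s T(0))$ for every $s$; in particular $k(s)=|\p_s T(s)|>0$ everywhere.

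The alternative $V\equiv 0$ is easy to rule out: it forces $\p_s T$ to be collinear with $T$, which combined with $T\cdot\p_s T=0$ (from $|T|\equiv 1$) gives $\p_s T\equiv 0$, so $T$ is constant and $\xi(s)=\xi(0)+sT$ fails to be $1$-periodic. Hence $V$ is nowhere zero, $T(s)$ lives in the fixed plane $\Pi$ for all $s$, and integrating yields $\xi(s)-\xi(0)\in\Pi$; therefore $\xi$ is contained in the affine $2$-plane $\xi(0)+\Pi$ and has strictly positive curvature everywhere. The only minor subtlety is the pointwise positivity of $\sigma$, which is handled cleanly by the elliptic maximum principle; the rest of the proof is algebra around the first integral $\p_s(\sigma^2V)=0$.
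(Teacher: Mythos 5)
Your proof is correct. It rests on the same two pillars as the paper's argument --- strict positivity of $\sigma$ via the maximum principle for $\p_{ss}\sigma-|\p_{ss}\xi|^2\sigma=-1$, and a conservation law of the form $\sigma^2\cdot(\text{curvature})=const$ --- but the execution is genuinely different and in one respect tighter. The paper first asserts planarity from the pointwise identity $-\xi=\p_s\sigma\,\p_s\xi+\sigma\p_{ss}\xi$ (``$\xi$, $\p_s\xi$ and $\p_{ss}\xi$ are in the same plane''), then works in that plane with the Frenet frame to obtain $\sigma^2k=const$, and finally invokes the total turning $\int_0^1k\,ds=2\pi$ to conclude $k>0$. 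Your bivector first integral $\p_s\bigl(\sigma^2\,T\wedge\p_sT\bigr)=0$ does all of this in one stroke: once the constant bivector is shown to be nonzero (your periodicity argument excluding $V\equiv0$ is exactly what is needed, and replaces the turning-number fact), it simultaneously fixes the osculating plane and forces $k=|\p_sT|>0$ everywhere. In particular you make rigorous the step the paper leaves implicit, namely that the pointwise span of $\{\p_s\xi,\p_{ss}\xi\}$ does not vary with $s$. The only caveat is regularity: your formula for $\p_{ss}T$ presupposes $\xi\in C^3$ and $\sigma\in C^2$; the latter follows from the elliptic equation, and the former by bootstrapping $\p_sT=-(\xi+(\p_s\sigma)T)/\sigma$ using $\sigma>0$ (the paper's Frenet computation with $\p_sk$ needs the same bootstrap). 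If you prefer to avoid it altogether, differentiate the equivalent identity $\sigma V=\xi\wedge T$, which yields $\p_s(\sigma^2V)=0$ using only the stated $C^2$ regularity.
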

\begin{proof}
Differentiating the equation in $s$ and using the constraint $|\p_s\xi|=1$ it is not hard to see that $\sigma$ satisfies
\begin{align*}
\p_{ss}\sigma-\sigma |\p_{ss}\xi|^2 = -1.
\end{align*}
We have $\sigma\in C^2(\Sf)$ by the elliptic estimate. Furthermore, by the strong maximum principle $\sigma>0$. This together with the equality $-\xi=\p_s\sigma\p_s\xi+\sigma\p_{ss}\xi$ implies that $\xi$ is a plane curve, since $\xi, \p_s\xi $ and $\p_{ss}\xi$ are in the same plane.

For the arc-length parametrized curve we have $\p_{ss}\xi=k N$, where $N$ is the unit inner normal along the curve and $k$ is the curvature.  Differentiating the equation of $\xi$ and using $\p_sN=-k\p_s\xi$ we get
\begin{align*}
\left(\p_{ss}\sigma-\sigma k^2+1\right)\p_s\xi + \left(2\p_s\sigma k +\sigma \p_sk\right)N=0.
\end{align*}
Thus $\sigma \p_s k+2\p_s \sigma  k =0$, which implies
\be
\sigma^2 k=const.
\ee
Since $\int_0^1k(s)ds=2\pi$  for a regular closed plane curve, and $\sigma>0$, one has $k(s)>0$ for all $s\in \Sf$.
\end{proof}

In the next proposition we derive a first-order ODE of $\sigma$.
\begin{prop}\label{prop:ode_sigma}
Let $\xi \in C^{2}(\Sf;\R^d)$ be a solution to \eqref{eq:stationary}. Let $\tau:=\sigma^2$. Then $\tau$ satisfies the first integral
\begin{align*}
\frac{1}{2}(\p_s\tau)^2+V(\tau)=\lambda,\quad V(\tau):=4\tau^{3/2}-6\bar \tau \tau.
\end{align*}
Here $\bar\tau=\int_0^1\tau^{1/2}ds$ and  $\lambda=V(\tau_e)$ is a fixed constant, where $\tau_e$ is any extreme value of $\tau$. Moreover, we have $\lambda\in [-2\bar\tau^3,0)$.
\end{prop}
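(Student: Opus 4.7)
The plan is to leverage Proposition~\ref{prop:stat} to close the stationary equation into a single autonomous ODE for $\sigma$, extract its elementary first integral, and then rewrite everything in the variable $\tau=\sigma^2$.

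First, the proof of Proposition~\ref{prop:stat} supplies both the identity $\sigma^2k=c$ for a constant $c$ and the equation $\p_{ss}\sigma=\sigma k^2-1$. Since $\sigma>0$ by the strong maximum principle and $k>0$ by Proposition~\ref{prop:stat}, we have $c>0$. Substituting $k=c/\sigma^2$ closes the system to the conservative one-degree-of-freedom ODE $\sigma''=c^2/\sigma^3-1$, whose first integral, obtained by multiplying by $\sigma'$, is
\[
E_0=\tfrac12(\sigma')^2+\sigma+\frac{c^2}{2\sigma^2}.
\]

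Next I identify the constant $E_0$ intrinsically. Integrating the conservation law over $\Sf$, the $\sigma$-term contributes $\bar\tau$, the $\sigma^{-2}$-term is evaluated through the Gauss--Bonnet-type identity $\int_0^1 k\,ds=2\pi$ combined with $k=c/\sigma^2$, which gives $\int_0^1\sigma^{-2}\,ds=2\pi/c$, and the kinetic term is rewritten via integration by parts against the $\sigma$-equation, i.e.\ $\int(\sigma')^2\,ds=-\int\sigma\sigma''\,ds$. A short bookkeeping step then produces $E_0=\tfrac{3}{2}\bar\tau$.

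Changing variables $\tau=\sigma^2$, so that $(\sigma')^2=(\p_s\tau)^2/(4\tau)$, and multiplying the conservation law through by $4\tau$, I arrive at exactly
\[
\tfrac12(\p_s\tau)^2+4\tau^{3/2}-6\bar\tau\,\tau=-2c^2,
\]
i.e.\ the desired first integral with $V(\tau)=4\tau^{3/2}-6\bar\tau\tau$ and $\lambda=-2c^2$. Since $\tau$ is continuous on the compact $\Sf$ it attains extrema, at which $\p_s\tau=0$ and hence $\lambda=V(\tau_e)$, independently of the chosen extremum. For the range of $\lambda$: $\lambda=-2c^2<0$ since $c>0$, while $V'(\tau)=6(\sqrt\tau-\bar\tau)$ vanishes only at $\tau=\bar\tau^2$, where $V$ attains its global minimum $V(\bar\tau^2)=-2\bar\tau^3$; hence $\lambda=V(\tau_e)\geq-2\bar\tau^3$. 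The main obstacle is the identification $E_0=\tfrac{3}{2}\bar\tau$, which is not a feature of the ODE alone but relies on the periodic boundary conditions through integration by parts and on the topological identity $\int k=2\pi$; everything else is either an algebraic change of variables or an elementary convexity analysis of $V$.
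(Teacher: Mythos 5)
Your proof is correct, and it takes a genuinely different route from the paper's. The paper never closes the system in $\sigma$ alone: it keeps $|\xi|^2$ in play, deriving $\int_0^1\sigma = \int_0^1|\xi|^2$, then $\sigma+\tfrac12|\xi|^2=\tfrac32\bar\sigma$ (from $\p_s\sigma+\xi\cdot\p_s\xi=0$), and then $|\p_s\sigma|^2+\sigma^2k^2=|\xi|^2$ from the stationary equation and Frenet orthogonality; combining these with $\sigma k^2=\p_{ss}\sigma+1$ yields the second-order ODE $\p_{ss}\tau=6\bar\tau-6\sqrt{\tau}$, which is integrated once against $\p_s\tau$, and $\lambda$ is then identified by evaluating at an extremum and located in $[-2\bar\tau^3,0)$ via the range of $\tau$. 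You instead eliminate $\xi$ entirely using $\sigma^2k=c$ from Proposition~\ref{prop:stat}, reduce to the autonomous equation $\sigma''=c^2/\sigma^3-1$, and read off its energy; the global integration over $\Sf$ (using $\int k\,ds=2\pi$ and $\int(\sigma')^2=-\int\sigma\sigma''$) pins the energy at $\tfrac32\bar\sigma$, after which the change of variables reproduces $V$ exactly. I checked the bookkeeping: $\tfrac{c^2}{2}\int\sigma^{-2}=\pi c$ and $\tfrac12\int(\sigma')^2=-\pi c+\tfrac12\bar\sigma$ indeed cancel to give $E_0=\tfrac32\bar\sigma$, and the cancellation persists if the total turning is $2\pi m$ (relevant since multiply covered circles are admissible stationary solutions). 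Your route buys one real advantage: the closed formula $\lambda=-2c^2$ with $c=\sigma^2k>0$ gives $\lambda<0$ immediately, whereas the paper's argument for strict negativity requires $\sqrt{\tau_e}<\tfrac32\bar\tau$, i.e.\ $|\xi|>0$ at the extremum, which the paper passes over quickly (it does follow from $|\p_s\sigma|^2+\sigma^2k^2=|\xi|^2$ and $\sigma k>0$). What the paper's route buys is the intermediate identities \eqref{eq:sig_xi}, \eqref{eq:sig_0}, \eqref{eq:sig_1} and \eqref{eq:sig_2}, which are reused verbatim in the proof of Proposition~\ref{prop:limit_sol}; your argument does not produce these, so it could not simply replace the paper's without some restructuring downstream.
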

\begin{proof}
First multiplying \eqref{eq:stationary} by $\xi$ and an integration yield
\begin{align}\label{eq:sig_xi}
\int_0^1\sigma ds =\int_0^1|\xi|^2 ds.
\end{align}
Then we multiply \eqref{eq:stationary} by $\p_s\xi$ and use $|\p_s\xi|=1$ to obtain
\begin{align*}
\p_s\sigma+\frac{1}{2} \p_s|\xi|^2=0 \text{ for all } s\in \Sf,
\end{align*}
which together with \eqref{eq:sig_xi} gives
\begin{align}\label{eq:sig_0}
\sigma+\frac{1}{2} |\xi|^2=\frac{3}{2}\bar\sigma,\quad \bar\sigma:=\int_0^1\sigma.
\end{align}

On the other hand, \eqref{eq:stationary} together with the orthogonality $\p_s\xi\cdot \p_{ss}\xi=0$ yields
\begin{align}\label{eq:sig_1}
|\p_s\sigma|^2+\sigma^2k^2 =|\xi|^2.
\end{align}
Since $\sigma k^2=\p_{ss}\sigma+1$ by the equation of $\sigma$, we obtain from \eqref{eq:sig_1}
\begin{align*}
|\p_s\sigma|^2+\sigma(\p_{ss}\sigma + 1)=\frac{1}{2}\p_{ss}(\sigma^2) + \sigma=|\xi|^2,
\end{align*}
which together with \eqref{eq:sig_0} gives the ODE of $\sigma$
\begin{align}\label{eq:sig_2}
\frac{1}{2}\p_{ss}(\sigma^2)=3\bar\sigma- 3 \sigma.
\end{align}

Remember that we have set $\tau:=\sigma^2$. Let us rewrite the above equation in terms of $\tau$ as $\p_{ss}\tau =6\bar\tau -6\tau^{1/2}$, where $\bar\tau=\bar \sigma=\int_0^1\tau^{1/2} ds$. Multiplying both sides by $\p_s\tau$ and integrating we obtain
\begin{align}\label{eq:tau}
\frac{1}{2}(\p_s\tau)^2+V(\tau)=\lambda,\quad V(\tau):=4\tau^{3/2}-6\bar \tau \tau.
\end{align}
At $s$ such that $\tau(s)=\tau_e$ we have $\p_s\tau(s)=0$, thus $\lambda=V(\tau_e)$ from \eqref{eq:tau}. The potential $V$ satisfies $V''(\tau)=3\tau^{-1/2}$, hence it is convex on $(0,\infty)$. We note from \eqref{eq:sig_0} and the definition $\tau=\sigma^2$ that $\tau\in (0,3\bar\tau/2)$. This implies $V(\tau)\in [-2\bar \tau^3, 0)$ with $\min V(\tau)=V(\bar\tau^2)=-2\bar\tau^3$. Thus $\lambda=V(\tau_e)\in [-2\bar \tau^3, 0)$.
\end{proof}

If $\xi$ is an $m$-covered circle, $m\in \N$, then it is easy to see from \eqref{eq:sig_0}--\eqref{eq:sig_1} that $\sigma=\frac{1}{4\pi^2m^2}$. In general it is possible to apply the method for the proof of Theorem A in \cite{AL86} to classify solutions $\tau$ (hence $\sigma$) to the ODE \eqref{eq:tau}. In the next proposition we show that if $\xi$ is simple and $\sigma$ is close to $\frac{1}{4\pi^2}$ ($m=1$), then $\xi$ is a circle and $\sigma=\frac{1}{4\pi^2}$.

\begin{prop}\label{prop:limit_sol}
Let $\xi \in C^{2}(\Sf;\R^d)$ be a solution to \eqref{eq:stationary}. Assume $\xi$ is simple, i.e., $\xi(s_1)\neq \xi(s_2)$ for $s_1\neq s_2$. Assume $\int_0^1\sigma\ ds\geq \frac{27}{32}\frac{1}{4\pi^2}$. Then $\sigma \equiv \frac{1}{4 \pi^2}$, and $\xi$ is a circle centered at $0$ with radius $\frac{1}{2\pi}$.
\end{prop}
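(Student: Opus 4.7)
My plan splits the proof based on whether $\sigma$ is constant. If $\sigma\equiv c$, then \eqref{eq:stationary} reduces to the harmonic-oscillator equation $c\p_{ss}\xi=-\xi$ on $\Sf$. Combining this with $|\p_s\xi|\equiv 1$ and $\int_0^1\xi\,ds=0$, a Fourier decomposition forces $\xi$ to parameterize an $m$-fold cover of a circle of radius $1/(2\pi m)$ with $c=1/(4\pi^2 m^2)$ for some $m\in\N$; simplicity selects $m=1$, giving $\sigma\equiv 1/(4\pi^2)$ and the asserted circle.

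For the non-constant case I use the first integral from Proposition \ref{prop:ode_sigma} and rescale by $y=\tau/\bar\sigma^2$ (with $\tau=\sigma^2$ and $\bar\sigma=\int_0^1\sigma\,ds$) and $\tilde s=s/\sqrt{\bar\sigma}$. This reduces the first integral to the universal ODE
\[
\tfrac12(\p_{\tilde s}y)^2+4y^{3/2}-6y=\tilde\lambda,\qquad \tilde\lambda=\lambda/\bar\sigma^3\in(-2,0),
\]
whose solution is periodic of period $\tilde T(\tilde\lambda)$. Matching the period $1$ on $\Sf$ forces $\bar\sigma=1/(n\tilde T)^2$ for some $n\in\N$. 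A direct quadrature via the substitution $u=\sqrt y$ gives $\tilde T(0^-)=2\sqrt 3$ and $\tilde T(-2^+)=2\pi/\sqrt 3$, and one verifies the lower bound $\tilde T(\tilde\lambda)\geq 2\sqrt 3$ on the whole interval. Consequently $\bar\sigma\leq 1/(12n^2)$. For any $n\geq 2$, this yields $\bar\sigma\leq 1/48<27/(128\pi^2)$ (since $128\pi^2<1296=27\cdot 48$, i.e., $\pi^2<10.125$), contradicting the hypothesis.

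It remains to rule out $n=1$. Comparing $|\xi|^2=(\sigma')^2+\sigma^2k^2$ (from $\xi=-\p_s(\sigma T)$ with $T$ the unit tangent) against $|\xi|^2=3\bar\sigma-2\sigma$ yields the identity $C^2=-\lambda/2$, where $\sigma^2k=C$ is the conserved quantity from Proposition \ref{prop:stat}. Coupled with simplicity in the form $\int_0^1k\,ds=2\pi$ (winding number $1$), i.e., $C\int_0^1\sigma^{-2}\,ds=2\pi$, and the rescaling above, this produces the transcendental relation
\[
n\,\tilde T(\tilde\lambda)\,\overline{y^{-1}}(\tilde\lambda)\,\sqrt{-\tilde\lambda}=2\sqrt 2\,\pi.
\]
The left hand side tends to $2\pi\sqrt{2/3}$ as $\tilde\lambda\to -2^+$ and to $\pi\sqrt 2$ as $\tilde\lambda\to 0^-$ (the latter limit requires a careful expansion of $\overline{y^{-1}}$ near the turning point $y_-\approx -\tilde\lambda/6$, where both $\overline{y^{-1}}$ and $1/\sqrt{-\tilde\lambda}$ diverge but their product is finite). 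A monotonicity argument then caps the left hand side by $2\pi\sqrt{2/3}<2\sqrt 2\pi$, so the case $n=1$ is impossible, completing the contradiction.

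The main technical obstacle is the quantitative analysis of the period $\tilde T(\tilde\lambda)$ and of the auxiliary quantity $G(\tilde\lambda):=\tilde T\overline{y^{-1}}\sqrt{-\tilde\lambda}$ on $(-2,0)$. These are elliptic-type integrals that must be controlled via the substitution $u=\sqrt y$ and a delicate asymptotic expansion at the singular endpoint $\tilde\lambda\to 0^-$, where the turning point $y_-$ approaches $0$. The arithmetic inequality $128\pi^2<1296$ is what pins down the specific constant $27/32$ in the hypothesis: any threshold strictly larger than $1/48$ would work.
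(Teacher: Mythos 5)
Your proposal takes a genuinely different route from the paper: it is the Abresch--Langer phase-plane analysis of the first integral from Proposition \ref{prop:ode_sigma}, which the paper explicitly mentions as available ("it is possible to apply the method for the proof of Theorem A in \cite{AL86}") but deliberately sidesteps. The parts you actually write out are correct: the rescaling to the universal equation $\frac12(\p_{\tilde s}y)^2+4y^{3/2}-6y=\tilde\lambda$ with $\tilde\lambda\in(-2,0)$ is right (and the apparent nonlocal constraint disappears, since integrating $y''=6(1-\sqrt y)$ over a period shows the mean of $\sqrt y$ is automatically $1$); the identity $C^2=-\lambda/2$ for $C=\sigma^2k$ checks out against \eqref{eq:sig_0}--\eqref{eq:sig_1}; the endpoint values $\tilde T(-2^+)=2\pi/\sqrt3$, $\tilde T(0^-)=2\sqrt3$, and the limits $2\pi\sqrt{2/3}$ and $\pi\sqrt2$ of $G(\tilde\lambda)$ are all correct (I verified the expansion of the mean of $y^{-1}$ near the turning point $y_-\sim-\tilde\lambda/6$, which gives $\tilde T\,\overline{y^{-1}}\sim 2\pi/\sqrt{-2\tilde\lambda}$); and $27\cdot 48>128\pi^2$ is exactly what makes the hypothesis $\bar\sigma\geq\frac{27}{32}\frac1{4\pi^2}$ incompatible with $\bar\sigma\leq\frac1{48}$.

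The gap is that the two inequalities carrying the entire argument --- $\tilde T(\tilde\lambda)\geq2\sqrt3$ and $G(\tilde\lambda)<2\sqrt2\,\pi$ on all of $(-2,0)$ --- are asserted ("one verifies", "a monotonicity argument") but not proved. Both are global bounds on elliptic-type integrals of the form $\int_{u_-}^{u_+}u^a\,du/\sqrt{(u-u_0)(u-u_-)(u_+-u)}$ as functions of the energy level; knowing the two endpoint limits does not control the function in between, and in \cite{AL86} the analogous monotonicity of the angle-per-period integral is the principal technical achievement of that paper. Worse, the $n\geq2$ step has essentially no slack: since $\frac{27}{128\pi^2}\approx0.02137$ and $\frac1{48}\approx0.02083$, you need $\tilde T\geq 2\sqrt3\,(1-\delta)$ with $\delta\approx 0.013$, i.e.\ a bound within about one percent of the conjectured sharp infimum $2\sqrt3$, so no crude estimate will close it. Until these two estimates are supplied, the proof is incomplete. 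For contrast, the paper's argument is soft and avoids period integrals entirely: simplicity enters through the four-vertex theorem (giving at least four critical points of $k$, hence of $\sigma$, hence an interval $I$ of length at most $1/4$ on which $\frac1{|I|}\int_I\sigma=\bar\sigma$), and the threshold $\frac{27}{32}\frac1{4\pi^2}$ is used in a Beckner-type inequality to force $\sigma\equiv\bar\sigma$ on $I$, after which Picard uniqueness for the ODE propagates constancy to all of $\Sf$. Note finally that in your scheme simplicity is used only via the Umlaufsatz $\int_0^1k\,ds=2\pi$ in the $n=1$ case (and to select $m=1$ in the constant case); that is legitimate, but worth stating explicitly.
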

\begin{proof}
1. By Proposition \ref{prop:ode_sigma} $\xi$ is a plane curve with curvature $k>0$. Moreover, $\xi$ is simple by assumption. Thus by the four-vertex theorem $k(s)$ has at least four critical points. Let $0\leq s_1<s_2<\cdots < s_J\leq 1$ with index $J\geq 4$ be the critical points. Since $\sigma^2 k = const$ and $\sigma>0$, $\sigma$ has the same critical points at $s_i$, i.e., $\p_s\sigma(s_i)=0$ for $i\in J$. We claim that
\begin{align}\label{eq:ave}
\frac{1}{|I_i|}\int_{I_i}\sigma(s)ds =\bar\sigma,\quad I_i=(s_i,s_{i+1}),\ |I_i|=s_{i+1}-s_i, \ i\in J.
\end{align}
Indeed, an integration of \eqref{eq:sig_1} in $s$ over $I_i$ together with \eqref{eq:sig_0} gives
\begin{align*}
\int_{I_i}(\p_s\sigma)^2  + \int_{I_i}\sigma^2k^2  =\int_{I_i} |\xi|^2  =  \int_{I_i}(3\bar\sigma-2\sigma).
\end{align*}
On the other hand, multiplying the equation of $\sigma$ by $\sigma$ and an integration by parts yield,
\begin{align*}
\int_{I_i}(\p_s\sigma)^2 + \int_{I_i}\sigma^2k^2= \int_{I_i}\sigma.
\end{align*}
Here we have used $\p_s\sigma(s_i)=0$, hence the boundary term in the integration by parts vanishes. From the above two equalities we conclude
\begin{align*}
\int_{I_i}(3\bar\sigma-2\sigma)= \int_{I_i}\sigma.
\end{align*}
Thus \eqref{eq:ave} follows.\\

2. We show that if $\bar\sigma \geq \frac{27}{32}\frac{1}{4\pi^2}$, then $\sigma(s)\equiv\bar\sigma$ in the intervals $I_i$ with $|I_i|\leq \frac{1}{4}$. In particular, since $J\geq 4$ there is always an open interval where $\sigma\equiv \bar\sigma$ there.

Take $I_i$ such that $|I_i|\leq \frac{1}{4}$. For simplicity we write $I$ instead of $I_i$ in the sequel. We multiply both sides of \eqref{eq:sig_2} by $\sigma$ and integrate from $s_i$ to $s_{i+1}$. An integration by parts gives
\begin{align}\label{eq:int}
\int_{I}\sigma(\p_s\sigma)^2 ds =  3\int_{I} (\sigma^2-\bar\sigma\sigma) ds.
\end{align}
On the other side, by a generalized Beckner type inequality (see \cite[Lemma 4]{CJS16} with $q=\frac{4}{3}$, $p=\frac{3}{2}$ and $f=\sigma^{3/2}$; see also \cite{KMV16A} for a link with ``unbalanced optimal transport'') we have
\begin{align}\label{eq:Beckner}
|I|^{-1/2}\|\sigma\|_{L^2(I)}\left(\|\sigma\|_{L^2(I)}^2 -|I|^{-1} \|\sigma\|_{L^1(I)}^2\right)\leq \frac{9}{2}\frac{|I|^2}{4\pi^2}\int_I \sigma (\p_s\sigma)^2 ds.
\end{align}
Here we have used that $C_P=\frac{|I|^2}{4\pi^2}$ is the optimal Poincar\' e constant with respect to the interval $I$. Combining \eqref{eq:int} and \eqref{eq:Beckner}, we arrive at
\begin{align*}
|I|^{-1/2}\|\sigma\|_{L^2(I)}\left(\|\sigma\|_{L^2(I)}^2 -|I|^{-1} \|\sigma\|_{L^1(I)}^2\right)\leq \frac{27}{2}\frac{|I|^2}{4\pi^2} \left(\|\sigma\|_{L^2(I)}^2 - |I|^{-1} \|\sigma\|_{L^1(I)}^2\right).
\end{align*}
Note that if $|I|\leq \frac{1}{4}$ and $\bar\sigma> \frac{27}{32}\frac{1}{4\pi^2}$, then
\begin{align*}
|I|^{-1/2}\|\sigma\|_{L^2(I)}> \frac{27}{2}\frac{|I|^2}{4\pi^2}.
\end{align*}
Indeed, this immediately follows from H\"older's inequality and \eqref{eq:ave}:
 \begin{align*}
 |I|^{-1/2}\|\sigma\|_{L^2(I)}\geq |I|^{-1}\|\sigma\|_{L^1}=\bar\sigma>\frac{27}{2}\frac{|I|^2}{4\pi^2}.
 \end{align*}
Thus $\|\sigma\|_{L^2(I)}^2=|I|^{-1}\|\sigma\|_{L^1(I)}^2$. From the equality case of the H\"older's inequality $|\sigma|=const$ a.e. in $I$. This together with the continuity of $\sigma$ yields that $\sigma(s)\equiv \bar\sigma$ in $I$. \\

3. In the last step we show that $\sigma(s)=1/(4\pi^2)$ for all $s\in \Sf$. Indeed, from step 2 above there exists an interval, say, $(0,s_0)$ for some small $s_0>0$ such that $\sigma\equiv \bar \sigma$ there. Let $\tau:=\sigma^2$. By the Picard theorem the initial value problem ($\tau=\sigma^2$)
\begin{align*}
\p_{ss}\tau = 6 (\bar\sigma-\sqrt{\tau}),\quad \tau(s_0)=\bar\sigma^2, \quad \p_s\tau(s_0)=0
\end{align*}
has a unique solution in $(s_0,s_0+\delta)$ for some $\delta>0$.
Since the constant function $\tau\equiv \bar\sigma^2$ is a solution, thus necessarily $\tau=\bar\sigma^2$ in $(s_0, s_0+\delta)$. This shows that $\tau$, thus $\sigma$, is identically $\bar\sigma$ in the whole circle $\Sf$.

With this at hand \eqref{eq:sig_0} yields that $|\xi|^2=const=\bar\sigma$ in $\Sf$. Thus $\xi$ is a circle centered at the origin. Since the length of the curve is equal to $1$, then $|\xi|=\frac{1}{2\pi k}$ for $k\in \{1,2,\cdots \}$. By our assumption $\bar \sigma \geq \frac{27}{32}\frac{1}{4\pi^2}$ (or because the curve is simple), we should have $|\xi|=\frac{1}{2\pi}$ and $\bar\sigma=\frac{1}{4\pi^2}$.
\end{proof}

\begin{rmk}\label{rmk:simple}
From the proof of Proposition \ref{prop:limit_sol} one can see that we only need the existence of two critical points $s_1$, $s_2$ of the curvature function $k(s)$ with $|s_1-s_2|\leq 1/4$. This can also be achieved by assuming the symmetry property $\xi(s)=-\xi(s+1/2)$ instead of assuming that the curve is simple.
\end{rmk}

\subsection{Global well-posedness and exponential stability}\label{sec:global}
In this section we study the global well-posedness of the normalized flow \eqref{eq:xi0}--\eqref{eq:sigma0} under the assumption that initially the curve is $C^2$ close to the circle
\begin{equation}\label{eq:w0}
w_0(s):=\frac{1}{2\pi}(\cos (2\pi s), \sin (2\pi s), 0,\cdots, 0).
\end{equation}
We will mainly show the uniform (in time) boundedness of curvature $|\p_{ss}\xi|^2$, which yields that the time $T$ in the local well-posedness result has a uniform lower bound. The main idea of the proof is to show that under some smallness assumption at the initial time, the parabolicity is preserved along the flow, i.e., $\sigma>c_0$ pointwise for any $t<T$, where $c_0>0$ is some absolute constant.

The proof of the result is based on a dynamical system approach. We let
\begin{align*}
\mathcal{C}:=\{c w_0(s+\theta): c,\theta\in \R\}
\end{align*}
denote the manifold generated by $w_0$ which is invariant under the dilation and rotation.
For each $t\in (0,T)$ we decompose
\begin{align}\label{eq:tilde_xi}
\xi(t,s)=\tilde{\xi}(t,s)+c(t)w_0(s+\theta(t)),
\end{align}
where $c(t)w_0(\cdot+\theta(t))$ is the $L^2$ projection of $\xi(t)$ onto $\mathcal{C}$, i.e., for each $t$ fixed
\begin{align*}
c(t)w_0(\cdot+\theta(t))\in \argmin _{w\in \mathcal{C}} \left\{\int_{\Sf}|\xi(t,\cdot)-w(\cdot)| ^2\dH^1\right\}.
\end{align*}
We remark that minimum are achieved by considering the minimization problem over the finite dimensional parameter space
$$\inf_{c,\theta\in \R}\mathcal{F}_{\xi}(c,\theta),\quad \mathcal{F}_\xi(c,\theta):= \int_{\mathcal S^1}|\xi(\cdot)-cw_0(\cdot+\theta)|^2 \dH^1.$$
The first derivatives $\p\mathcal{F}_\xi(c,\theta)/\p c$ and $\p\mathcal{F}_{\xi}(c,\theta)/\p \theta$ vanish at the minimizers $(c(t),\theta(t))$, yielding the following orthogonality conditions
\begin{align}\label{eq:tilde_orth}
\int_0^1 \tilde{\xi}(t,s) \cdot w_0 (s+\theta(t)) ds = \int_0^1 \tilde{\xi}(t,s)\cdot c(t)\p_sw_0(s+\theta(t)) ds =0.
\end{align}
Note that since $c\mapsto \mathcal{F}_{\xi}(c,\theta)$ is strictly convex, there is indeed a unique $c(t)$ associated with $\xi(t,\cdot)$ for each $t\in [0,T]$.

Now we derive the evolution of the parameters $c(t)$ and $\theta(t)$ (assume for now they are differentiable. For more detailed discussion we refer to Lemma \ref{lem:differentiability}).  By using the equation of $\xi$ we obtain the equation of $\tilde{\xi}$
\begin{equation}\label{eq:tilde_equ}
\begin{split}
\p_t\tilde{\xi}(t,s)&=\tilde{\xi}(t,s)+\p_s(\sigma(t,s)\p_s\tilde{\xi}(t,s))+\\
&+c(t)\p_s\left((\sigma(t,s)-\frac{1}{4\pi^2})\p_s w_0(s+\theta(t))\right)-\dot{c}(t)w_0(s+\theta(t))-c(t)\dot{\theta}(t)\p_sw_0(s+\theta(t)),
\end{split}
\end{equation}
where we have used the relation $w_0+\frac{1}{4\pi^2}\p_{ss}w_0=0$.
We multiply \eqref{eq:tilde_equ} by $\tilde{\xi}(t,\cdot)$, $w_0(\cdot+\theta(t))$ and $\p_sw_0(\cdot+\theta(t))$ and integrate over $\Sf$. Using the orthogonality condition \eqref{eq:tilde_orth} we quantify the evolution of $\|\xi(t,\cdot)\|_{L^2}^2$ as well as of the parameters $c(t)$ and $\theta(t)$ for $t<T$:
\begin{equation}\label{eq:equ_eta}
\begin{split}
\frac{1}{2}\p_t\int_0^1|\tilde{\xi}|^2 ds &= \int_0^1|\tilde{\xi}|^2 -\int_0^1\sigma |\p_s\tilde{\xi}|^2 ds -c(t)\int_0^1\sigma\p_sw_0(\cdot+\theta(t))\cdot \p_s\tilde{\xi} ds, \\
\dot{c}(t)&=\left(1-4\pi^2\int_0^1\sigma ds\right)c(t)-4\pi^2\int_0^1\sigma \p_sw_0(\cdot+\theta(t))\cdot \p_s\tilde{\xi} ds,\\
c(t)\dot{\theta}(t)&=4\pi^2\int_0^1\sigma w_0(\cdot+\theta(t))\cdot \p_s\tilde{\xi} ds.
\end{split}
\end{equation}
From the unit speed constraint $|\p_s\xi(t,s)|=|\p_sw_0(s+\theta(t))|= 1$ for all $(t,s)\in [0,T]\times \Sf$, we obtain
\begin{equation}\label{eq:sqr}
|\p_s\tilde{\xi}(t,s)|^2+2c(t)\p_s\tilde{\xi}(t,s)\cdot \p_sw_0(s+\theta(t))+c(t)^2=1.
\end{equation}
 Integrating over $\Sf$ and using \eqref{eq:tilde_orth} (note that $\int_0^1\p_s\tilde{\xi}\cdot \p_sw_0 ds=-\int_0^1\tilde{\xi}\cdot \p_{ss}w_0=0$ since $\p_{ss}w_0=-4\pi^2w_0$) yields
\begin{equation}\label{eq:tilde_equ1}
\int_0^1|\p_s\tilde{\xi}|^2 ds =1-c(t)^2.
\end{equation}
Here and in the sequel for brevity we write $w_0$ and $\p_sw_0$ instead of $w_0(s+\theta(t))$ and $\p_sw_0(s+\theta(t))$, respectively. \\

\begin{lem}\label{lem:differentiability}
Suppose $\xi(t,s)\in C^{2+\alpha,\alpha/2}([0,T]\times \Sf)$, $\p_t\xi(t,s)\in C^{\alpha,\alpha/2}((0,T]\times \Sf)$ is a classical solution to the normalized flow \eqref{eq:xi0}--\eqref{eq:sigma0}. Suppose the $L^2$ projection of $\xi(0,\cdot)$ onto $\mathcal{C}$ is not $0$, i.e. $c(0)\neq 0$. Then there exist $t_0\in (0,T]$ depending on $c(0)$, $\|\xi(0,\cdot)\|_{C^2(\Sf)}$, and parameters $(c(t),\theta(t))\in C^0([0,t_0])\cap C^1((0,t_0])$, such that \eqref{eq:tilde_xi}--\eqref{eq:equ_eta} are satisfied for all $t\in (0,t_0)$.
\end{lem}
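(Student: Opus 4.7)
The plan is to reduce the lemma to an application of the implicit function theorem to the first-order optimality system characterizing the $L^2$-projection of $\xi(t,\cdot)$ onto the two-dimensional manifold $\mathcal{C}$. First I would write out the stationarity conditions for the functional $\mathcal{F}_{\xi(t,\cdot)}(c,\theta)=\int_0^1|\xi(t,s)-cw_0(s+\theta)|^2\,ds$. Using $|w_0|^2\equiv 1/(4\pi^2)$ and $\int_0^1 w_0\cdot\p_sw_0\,ds=0$, the gradient $\nabla_{c,\theta}\mathcal{F}_{\xi}$ defines the map
$$\Phi(t,c,\theta):=\left(\tfrac{c}{2\pi^2}-2\!\int_0^1\!\xi(t,s)\cdot w_0(s+\theta)\,ds,\ -2c\!\int_0^1\!\xi(t,s)\cdot\p_sw_0(s+\theta)\,ds\right),$$
and the hypothesis reads exactly $\Phi(0,c(0),\theta(0))=0$ with $c(0)\neq 0$.

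Second, I would compute the Jacobian $D_{c,\theta}\Phi$ at $(0,c(0),\theta(0))$, which coincides with the Hessian $\nabla^2\mathcal{F}_{\xi(0,\cdot)}$ at the critical point. A direct computation using $\p_{ss}w_0=-4\pi^2w_0$ gives the entries $1/(2\pi^2)$ on the $(1,1)$-slot, $-2\int\xi(0,\cdot)\cdot\p_sw_0(\cdot+\theta(0))\,ds$ in the off-diagonal, and $8\pi^2 c(0)\int\xi(0,\cdot)\cdot w_0(\cdot+\theta(0))\,ds$ in the $(2,2)$-slot. The off-diagonal vanishes by the second component of $\Phi(0,c(0),\theta(0))=0$, while the $(2,2)$-entry reduces to $2c(0)^2$ by the first component. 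Hence the Jacobian equals $\mathrm{diag}(1/(2\pi^2),\,2c(0)^2)$ and is invertible precisely because $c(0)\neq 0$. Since $\xi\in C^{2+\alpha,\alpha/2}([0,T]\times\Sf)$, the map $\Phi$ is continuous on $[0,T]\times\R^2$ and smooth in $(c,\theta)$; because $\p_t\xi\in C^{\alpha,\alpha/2}((0,T]\times\Sf)$, $\Phi$ is moreover $C^1$ in $t$ on $(0,T]$. The implicit function theorem then supplies some $t_0\in(0,T]$, whose quantitative size is controlled by $|c(0)|$ and $\|\xi(0,\cdot)\|_{C^2(\Sf)}$ via the operator norm of the inverse Hessian and $C^1$-bounds on $\Phi$, and a unique path $t\mapsto(c(t),\theta(t))\in C^0([0,t_0])\cap C^1((0,t_0])$ solving $\Phi(t,c(t),\theta(t))=0$ in a neighborhood of $(c(0),\theta(0))$; shrinking $t_0$ if necessary preserves $c(t)\neq 0$ by continuity.

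Finally, the identities \eqref{eq:equ_eta} are derived by substituting the decomposition $\xi=\tilde\xi+c\,w_0(\cdot+\theta)$ into the normalized flow, which, via $w_0+\tfrac{1}{4\pi^2}\p_{ss}w_0=0$, yields equation \eqref{eq:tilde_equ} for $\tilde\xi$. Testing \eqref{eq:tilde_equ} successively against $\tilde\xi$, $w_0(\cdot+\theta(t))$ and $\p_sw_0(\cdot+\theta(t))$ in $L^2(\Sf)$ and integrating by parts kills the projection contributions thanks to \eqref{eq:tilde_orth} (whose $t$-derivatives produce only terms proportional to $\dot\theta\int\tilde\xi\cdot\p_sw_0\,ds$, which vanish by the same orthogonality), together with $\p_{ss}w_0=-4\pi^2 w_0$, $\int w_0\cdot\p_sw_0=0$ and $\int|\p_sw_0|^2=1$; the three displayed equations for $\tfrac12\p_t\|\tilde\xi\|_{L^2}^2$, $\dot c(t)$ and $c(t)\dot\theta(t)$ drop out. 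The only delicate point is non-degeneracy of the Hessian, but this reduces to the algebraic observation that its off-diagonal entry automatically vanishes at any critical point by the second optimality condition, so the determinant is exactly $c(0)^2/\pi^2$; the remainder is a textbook IFT argument.
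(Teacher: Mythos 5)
Your proof is correct, and the computations check out: with $|w_0|\equiv\frac{1}{2\pi}$, $|\p_sw_0|\equiv 1$ and $\p_{ss}w_0=-4\pi^2w_0$, the Hessian of $\mathcal F_{\xi}$ at a critical point with $c\neq 0$ is indeed $\operatorname{diag}(\frac{1}{2\pi^2},\,2c^2)$, so the implicit function theorem applies, and your subsequent testing of \eqref{eq:tilde_equ} against $\tilde\xi$, $w_0$ and $\p_sw_0$ reproduces \eqref{eq:equ_eta}. However, your route is genuinely different from the paper's. The paper goes in the opposite direction: it \emph{defines} $(c(t),\theta(t))$ as the solution of the ODE system \eqref{eq:ode_ctheta} (existence and $C^1$ regularity by Picard--Lindel\"of, using the bounds on $\sigma$ from Lemma \ref{lem:lower_sigma}), and then \emph{verifies a posteriori} that the orthogonality conditions \eqref{eq:tilde_orth} propagate from $t=0$, by showing that $A(t)=\int\tilde\xi\cdot w_0$ and $B(t)=\int\tilde\xi\cdot\p_sw_0$ solve a linear ODE system with vanishing initial data. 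Your IFT argument buys a more conceptual construction -- the pair $(c(t),\theta(t))$ is manifestly the branch of critical points of $\mathcal F_{\xi(t,\cdot)}$, and the nondegeneracy condition $c(0)\neq 0$ appears transparently as invertibility of the Hessian -- at the cost of having to differentiate the implicit function to recover the ODEs; the paper's route gets $C^1$ regularity for free from Picard--Lindel\"of but makes the link to the minimization problem only through the propagation argument. Two small points to keep in mind: (i) like the paper, you only establish that $c(t)w_0(\cdot+\theta(t))$ is a nondegenerate critical point of $\mathcal F_{\xi(t,\cdot)}$ for $t>0$, not the global minimizer, but this is all that is used downstream (only \eqref{eq:tilde_orth} and \eqref{eq:equ_eta} enter the later lemmas, and near a nondegenerate minimum the critical point stays a local minimum by continuity of the Hessian); (ii) in your parenthetical about differentiating the orthogonality conditions, the extra term for the $\p_sw_0$-test is proportional to $\dot\theta\int\tilde\xi\cdot w_0$ rather than $\dot\theta\int\tilde\xi\cdot\p_sw_0$, though both vanish by \eqref{eq:tilde_orth}, so nothing breaks.
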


\begin{proof}
Given a classical solution $\xi$, there exist $(c(0),\theta(0))\in \R\times [0,2\pi)$ be such that $$\mathcal{F}_{\xi(0)}(c(0),\theta(0))=\min_{(c,\theta)\in \R\times [0,2\pi)} \mathcal{F}_{\xi(0)}(c,\theta).$$ Indeed, since $\mathcal{F}_{\xi}$ is continuous, strictly convex in $c$ and $2\pi$-periodic in $\theta$, the minimum is realized. We consider the ODE system
\begin{equation}\label{eq:ode_ctheta}
\begin{split}
\dot c(t) &= c(t)-4\pi^2 \int_0^1 \sigma \p_s w_0(s+\theta(t)) \cdot \p_s\xi(t,s) ds,\\
c(t)\dot\theta(t)& =4\pi^2 \int_0^1\sigma w_0(s+\theta(t))\cdot \p_s\xi(t,s)ds,\\
(c(t),\theta(t))\big |_{t=0} &=(c(0),\theta(0)).
\end{split}
\end{equation}
Using $|\p_s\xi|=1$ and the bound of $\sigma$ in Lemma \ref{lem:lower_sigma}, we have that the right-hand side of the system is bounded by constants depending on $\|\xi(0,\cdot)\|_{C^2(\Sf)}$. Thus by the Picard-Lindel\"of theorem there exists a unique solution $(c(t),\theta(t))$ defined in $[0,t_0]$ for some $t_0\in (0,T]$, whose value lies in a neighborhood of $(c(0),\theta(0))$. Here $t_0$ and the size of the neighborhood depend on $c(0)$ and $\|\xi(0,\cdot)\|_{C^2(\Sf)}$. Furthermore, from the $t$-regularity of $\xi$ and $\sigma$ we can conclude that $(c(t),\theta(t))\in C^0([0,t_0])\cap C^1((0,t_0])$.

Let $\tilde{\xi}(t,s):=\xi(t,s)-c(t)w_0(s+\theta(t))$, $t\in [0,t_0]$. From the equation of $\xi$ one easily derives the equation of $\tilde{\xi}$ in \eqref{eq:tilde_equ}. Moreover, using $|\p_sw_0|=1$ and $\p_sw_0\cdot w_0=0$, the ODE system \eqref{eq:ode_ctheta} can be rewritten in terms of $\tilde{\xi}$ as in \eqref{eq:equ_eta}. We claim that \eqref{eq:tilde_equ} together with the ODE for $(c(t),\theta(t))$ in \eqref{eq:equ_eta} implies the orthogonality conditions \eqref{eq:tilde_orth}. Indeed, let
$$A(t):=\int_0^1 \tilde{\xi}(t,s)\cdot w_0(s+\theta(t)) ds,\ B(t):=\int_0^1 \tilde{\xi}(t,s)\cdot \p_s w_0(s+\theta(t)) ds,\ t\in [0,t_0].
$$ We have $A(0)=B(0)=0$. Direct differentiation yields
\begin{equation}\label{eq:ode_ab}
\begin{split}
\dot A(t) &= \int_0^1\p_t\tilde{\xi}(t,s)\cdot w_0(s+\theta(t)) ds+ \dot\theta(t) B(t),\\
\dot B(t) &= \int_0^1 \p_t\tilde{\xi}(t,s) \cdot \p_s w_0(s+\theta(t))ds - \frac{\dot\theta(t)}{4\pi^2} A(t).
\end{split}
\end{equation}
 Multiplying \eqref{eq:tilde_equ} by $w_0(s+\theta(t))$ and $\p_s w_0(s+\theta(t))$, and using the ODE of $(c(t),\theta(t))$ in \eqref{eq:equ_eta}, we obtain
\begin{equation}\label{eq:ode_ab1}
\begin{split}
\int_{0}^1 \p_t\tilde{\xi}(t,s)\cdot w_0(s+\theta(t)) ds  = A(t),\ \int_0^1 \p_t\tilde{\xi}(t,s) \cdot \p_s w_0(s+\theta(t))ds = B(t).
\end{split}
\end{equation}
Applying \eqref{eq:ode_ab1} to \eqref{eq:ode_ab} yields
\begin{equation*}
\left(\begin{array}{ll}
\dot A(t)\\
\dot B(t)
\end{array}\right)=
\left(\begin{array}{ll}
1 & \dot\theta(t)\\
-\frac{\dot\theta(t)}{4\pi^2} & 1
\end{array}
\right)
\left(\begin{array}{ll}
 A(t)\\
 B(t)
\end{array}\right)
\end{equation*}
Since the coefficient matrix is nonsingular and since $A(0)=B(0)=0$ we have that $A(t)=B(t)=0$ for all $t\in [0,t_0]$, which completes the proof for the claim.
\end{proof}


The rest of the argument goes as follows:
\begin{itemize}
\item [(i)] in Lemma \ref{lem:exp_decay} and Lemma \ref{lem:exp_deri} we prove the decay estimate for $t\mapsto \|\xi(t,\cdot)\|_{L^2}$ and $t\mapsto \|\p_s\xi(t,\cdot)\|_{L^2}$, $t\in (0,t_0]$, under the initial assumptions that $\int_0^1\sigma(0,s)ds \geq \frac{2}{3}\frac{1}{4\pi^2}$ and $c(0)^2\geq \frac{1}{2}$. As a corollary of the decay estimate and Proposition \ref{prop:mono} (ii), these initial assumptions are preserved along the flow. We remark that the initial assumptions are always satisfied if initially $\xi$ is sufficiently close to the stationary solution $w_0$ (cf. Theorem \ref{thm:global}).
\item [(ii)] in Lemma \ref{lem:para} we derive pointwise oscillation estimate for the Lagrange multiplier $\sigma$. This is a crucial step, since $\sigma$ appears as the parabolicity coefficient in the equation of $\xi$.
\item [(iii)] Lemma \ref{lem:para} together with the decay estimate in (i) would yield the uniform (in $t_0$) boundedness of $\|\xi(t,\cdot)\|_{C^{2+\alpha}(\Sf)}$. Thus we have a lower bound on the time step in the iteration procedure. These are proved in Theorem \ref{thm:global}, where we show the global well-posedness results for initial datum sufficiently close to $w_0$.
\end{itemize}

In the sequel $t_0$, $\tilde{\xi}$ and $(c(t), \theta(t))$ are those given by Lemma \ref{lem:differentiability} associated with a classical solution $\xi(t,s)\in C^{2+\alpha, \alpha/2}([0,T]\times \Sf)$, $\p_t\xi(t,s)\in C^{\alpha,\alpha/2}((0,T]\times \Sf)$ with $c(0)\neq 0$. We also let
\begin{equation*}
\bar\sigma(t):=\int_0^1\sigma(t,s)ds.
\end{equation*}

\begin{lem}\label{lem:exp_decay}
Assume that
\begin{align}\label{eq:lower_bound_sigma}
\bar\sigma(0) \geq \frac{2}{3}\frac{1}{4\pi^2},
\end{align}
Then $t\mapsto \|\tilde{\xi}(t,\cdot)\|_{L^2(\Sf)}$ is monotone decreasing in $[0,t_0]$. Moreover,
\begin{align}\label{eq:diff}
\|\tilde{\xi}(t,\cdot)\|_{L^2(\Sf)}\leq e^{-\frac{1}{3}t}\|\tilde{\xi}(0,\cdot)\|_{L^2(\Sf)}, \quad t\in [0,t_0].
\end{align}
\end{lem}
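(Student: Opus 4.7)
The plan is to derive a closed Gronwall-type differential inequality for $f(t):=\int_0^1|\tilde\xi(t,s)|^2\,ds$ from the first line of \eqref{eq:equ_eta}, by combining it with the pointwise arc-length constraint \eqref{eq:sqr}, the positivity of $\sigma$, and the monotonicity of $\bar\sigma$.

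First I would eliminate the mixed term $-c(t)\!\int\sigma\,\p_sw_0\cdot\p_s\tilde\xi\,ds$ in favor of quantities already on the right-hand side. The unit-speed identity \eqref{eq:sqr} rearranges pointwise as $2c(t)\p_sw_0\cdot\p_s\tilde\xi = 1-c(t)^2-|\p_s\tilde\xi|^2$, so multiplying by $\sigma$ and integrating over $\Sf$ gives
$$-c(t)\!\int_0^1\!\sigma\,\p_sw_0\cdot\p_s\tilde\xi\,ds \;=\; -\tfrac{1-c(t)^2}{2}\,\bar\sigma(t) + \tfrac{1}{2}\!\int_0^1\!\sigma|\p_s\tilde\xi|^2\,ds,$$
where $\bar\sigma(t):=\int_0^1\sigma(t,s)\,ds$. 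Substituting this into \eqref{eq:equ_eta} and then using \eqref{eq:tilde_equ1}, $1-c(t)^2=\int_0^1|\p_s\tilde\xi|^2\,ds$, collapses the evolution to the clean identity
$$\frac{d}{dt}f(t)\;=\;2f(t)\;-\;\int_0^1 (\sigma+\bar\sigma)\,|\p_s\tilde\xi|^2\,ds.$$

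Next I would convert this identity into an inequality. Lemma \ref{lem:lower_sigma} gives $\sigma>0$ pointwise, and Proposition \ref{prop:mono}(ii) together with the hypothesis $\bar\sigma(0)\ge \tfrac{2}{3}\cdot\tfrac{1}{4\pi^2}$ yields $\bar\sigma(t)\ge \tfrac{1}{6\pi^2}$ for all $t\in[0,t_0]$. Using $\sigma+\bar\sigma\ge\bar\sigma$ in the integrand then gives
$$\frac{d}{dt}f(t)\;\le\;2f(t)\;-\;\tfrac{1}{6\pi^2}\!\int_0^1|\p_s\tilde\xi|^2\,ds.$$
The remaining step is a Wirtinger/Poincar\'e-type lower bound on $\int|\p_s\tilde\xi|^2\,ds$ by $\int|\tilde\xi|^2\,ds$. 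Besides $\int\tilde\xi\,ds=0$ (both $\xi$ and $w_0(\cdot+\theta)$ have zero mean), the orthogonality conditions \eqref{eq:tilde_orth} strip $\tilde\xi$ of the tangential directions to $\mathcal C$, which combined with the pointwise quadratic constraint $|\p_s\tilde\xi|^2 + 2c\p_s\tilde\xi\cdot\p_sw_0 + c^2 = 1$ of \eqref{eq:sqr} should upgrade the Wirtinger constant from $4\pi^2$ to $16\pi^2$: $\int|\p_s\tilde\xi|^2\,ds\ge 16\pi^2\int|\tilde\xi|^2\,ds$. Plugging in gives
$$\frac{d}{dt}f(t)\;\le\;\Bigl(2-\tfrac{16\pi^2}{6\pi^2}\Bigr)f(t)\;=\;-\tfrac{2}{3}\,f(t),$$
and a single application of Gronwall's inequality delivers both the monotonic decrease of $f$ and the bound $f(t)\le e^{-2t/3}f(0)$, which after taking square roots is the claimed \eqref{eq:diff}.

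The main obstacle is establishing the improved Wirtinger constant $16\pi^2$ for $\tilde\xi$. The two scalar orthogonality relations in \eqref{eq:tilde_orth} remove only two out of the $2d$ dimensions of the first eigenspace of $-\p_{ss}$ on $L^2(\Sf;\R^d)$, and even in the planar case a reverse-orientation first-Fourier-mode residue can survive in $\tilde\xi$ on which the standard constant $4\pi^2$ is tight. The required factor of four must therefore be extracted from the nonlinear arc-length identity \eqref{eq:sqr}, which couples a putative first-mode part of $\tilde\xi$, via its product with $\p_sw_0$, to a second-mode term that must be cancelled by second-mode oscillations of $|\p_s\tilde\xi|^2$ itself; this forces extra higher-mode Fourier energy into $\tilde\xi$, inflating $\int|\p_s\tilde\xi|^2$ relative to $\int|\tilde\xi|^2$. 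Making this trade-off quantitative and uniform in the higher modes is the delicate technical point of the proof.
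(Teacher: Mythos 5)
Your argument is, line for line, the paper's own proof: the identity $\p_t\int_0^1|\tilde\xi|^2\,ds=2\int_0^1|\tilde\xi|^2\,ds-\int_0^1(\sigma+\bar\sigma)|\p_s\tilde\xi|^2\,ds$ is obtained there by exactly the same two substitutions (multiplying \eqref{eq:sqr} by $\sigma$ and integrating to get \eqref{eq:sqr2}, then invoking \eqref{eq:tilde_equ1}), the term $\int_0^1\sigma|\p_s\tilde\xi|^2\,ds$ is discarded by nonnegativity of $\sigma$, and Proposition \ref{prop:mono}(ii) together with \eqref{eq:lower_bound_sigma} gives $8\pi^2\bar\sigma(t)-1\ge 1/3$, closing the Gronwall argument. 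The one step you leave open --- the improved bound $\int_0^1|\p_s\tilde\xi|^2\,ds\ge 16\pi^2\int_0^1|\tilde\xi|^2\,ds$ --- is precisely \eqref{eq:poincare}, and the paper does \emph{not} extract it from the arc-length identity: it asserts, via Fourier expansion, that $\int_0^1\tilde\xi\,ds=0$ together with the orthogonality conditions \eqref{eq:tilde_orth} forces the zeroth and first Fourier coefficients of $\tilde\xi$ to vanish, upgrading the Wirtinger constant from $4\pi^2$ to $16\pi^2$.

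Your hesitation at this step is substantive, and since you neither prove the $16\pi^2$ constant nor make your proposed ``mode-coupling'' mechanism quantitative, your proof is incomplete exactly there. Note that \eqref{eq:tilde_orth} consists of only two scalar conditions, which annihilate the two-dimensional subspace spanned by $w_0(\cdot+\theta)$ and $\p_sw_0(\cdot+\theta)$ inside the $2d$-dimensional first eigenspace of $-\p_{ss}$ on $L^2(\Sf;\R^d)$; for instance, when $d=2$ the reversed circle $v(s)=(\cos 2\pi s,-\sin 2\pi s)$ has mean zero, is $L^2$-orthogonal to both $w_0$ and $\p_sw_0$, and saturates the constant $4\pi^2$. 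So \eqref{eq:poincare} cannot follow from the linear orthogonality alone, and a complete justification must either enlarge the projection manifold $\mathcal C$ so as to absorb the entire first Fourier mode, or --- as you suggest --- use the pointwise constraint \eqref{eq:sqr} to show that the residual first-mode content of $\tilde\xi$ is of higher order. You have correctly located the thinnest point of the argument, but you have not filled it; everything else in your proposal coincides with the paper's proof.
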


\begin{proof}
We will derive a differential inequality on $\|\tilde{\xi}(t,\cdot)\|_{L^2(\Sf)}$ using its evolution equation from \eqref{eq:equ_eta}.
First we multiply $\sigma$ to the both sides of  \eqref{eq:sqr} and integrate over $\Sf$
\begin{align}\label{eq:sqr2}
 (1-c^2)\bar\sigma=\int_0^1\sigma|\p_s\tilde{\xi}|^2 ds +2c\int_0^1\sigma \p_s\tilde{\xi}\cdot \p_sw_0  ds.
\end{align}
Thus \eqref{eq:equ_eta} can be rewritten as
\begin{align*}
\frac{1}{2}\p_t\int_0^1|\tilde{\xi}|^2& = \int_0^1 |\tilde{\xi}|^2 - \int_0^1\sigma|\p_s\tilde{\xi}|^2 -\frac{1-c^2}{2}\bar\sigma +\frac{1}{2}\int_0^1\sigma|\p_s\tilde{\xi}|^2 ds\\
&=\int_0^1 |\tilde{\xi}|^2 - \frac{1}{2}\int_0^1\sigma|\p_s\tilde{\xi}|^2-\frac{1-c^2}{2}\bar\sigma.
\end{align*}
Applying \eqref{eq:tilde_equ1} to the last term of the above equation we obtain
\begin{equation}\label{eq:diff_inequ0}
\begin{split}
\frac{1}{2}\p_t\int_0^1|\tilde{\xi}|^2&=\int_0^1 |\tilde{\xi}|^2 - \frac{1}{2}\int_0^1\sigma|\p_s\tilde{\xi}|^2-\frac{1}{2}\bar\sigma\int_0^1|\p_s\tilde{\xi}|^2 ds\\
&\leq \int_0^1 |\tilde{\xi}|^2-\frac{1}{2}\bar\sigma\int_0^1|\p_s\tilde{\xi}|^2 ds.
\end{split}
\end{equation}
We claim that \eqref{eq:tilde_xi} implies
\begin{align}\label{eq:poincare}
\int_0^1|\tilde{\xi}(t,\cdot)|^2 ds\leq \frac{1}{16\pi^2}\int_0^1|\p_s\tilde{\xi}(t,\cdot)|^2 ds.
\end{align}
To see this, one can employ Fourier expansion of $\tilde{\xi}(t,\cdot)$. By $\int_0^1\tilde{\xi}=0$ as well as the orthogonality condition \eqref{eq:tilde_orth}, one finds that the zero and first order Fourier coefficients of $\tilde{\xi}$ are zero.

Applying \eqref{eq:poincare} to \eqref{eq:diff_inequ0} we obtain
\begin{align*}
\frac{1}{2}\p_t\int_0^1|\tilde{\xi}|^2\leq -\left(8\pi^2\bar\sigma(t)-1\right)\int_0^1 |\tilde{\xi}|^2.
\end{align*}
Since $t\mapsto \bar\sigma(t)$ is monotone increasing, cf. Proposition \ref{prop:mono} (ii), from \eqref{eq:lower_bound_sigma} we have $8\pi^2\bar\sigma(t)-1\geq \frac{1}{3}$ for all $t\in [0,t_0]$. Thus
\begin{align*}
\p_t\int_0^1|\tilde{\xi}|^2\leq -\frac{2}{3}\int_0^1 |\tilde{\xi}|^2, \quad t\in (0,t_0].
\end{align*}
This implies that $t\mapsto \|\tilde{\xi}(t,\cdot)\|_{L^2(\Sf)}$ is monotone decreasing. An integration in $t$ results in \eqref{eq:diff}, which completes the proof.
\end{proof}

The next lemma concerns the decay estimate of $t\mapsto \|\p_s\tilde{\xi}(t,\cdot)\|_{L^2(\Sf)}^2$.

\begin{lem}\label{lem:exp_deri}
Assume that $\bar\sigma(0)$ satisfy the same assumptions as in Lemma \ref{lem:exp_decay}. Assume furthermore that
\begin{equation}\label{eq:lower_bound_c}
c(0)^2\geq \frac{1}{2}.
\end{equation}
 Then
$t\mapsto \|\p_s\tilde{\xi}(t,\cdot)\|_{L^2(\Sf)}$ is monotone decreasing. Moreover,
 \begin{align*}
\|\p_s\tilde{\xi}(t,\cdot)\|_{L^2(\Sf)}\leq e^{-\frac{1}{16}t}\|\p_s\tilde{\xi}(0,\cdot)\|_{L^2(\Sf)} ,\quad t\in [0,t_0].
\end{align*}
\end{lem}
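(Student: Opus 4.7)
My plan is to reduce the decay of $\|\p_s\tilde\xi(t,\cdot)\|_{L^2(\Sf)}$ to a scalar differential inequality for the single function $c(t)$. The key observation is that the orthogonality conditions \eqref{eq:tilde_orth} together with the unit-speed constraint $|\p_s\xi|=1$ already yield \eqref{eq:tilde_equ1}, i.e.\ $\int_0^1|\p_s\tilde\xi|^2\,ds = 1-c(t)^2$, so
$$\p_t\int_0^1|\p_s\tilde\xi|^2\,ds = -2\,c(t)\,\dot c(t),$$
and everything reduces to a lower bound on $c\dot c$ of the form $c\dot c\geq \frac{1}{16}(1-c^2)$.

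To obtain such a bound I would combine the ODE for $c$ from \eqref{eq:equ_eta} with the identity \eqref{eq:sqr2} in order to eliminate the mixed integral $\int_0^1\sigma\,\p_s\tilde\xi\cdot\p_sw_0\,ds$. Multiplying the second line of \eqref{eq:equ_eta} by $c$ and substituting $2c\int\sigma\,\p_s\tilde\xi\cdot\p_sw_0\,ds = (1-c^2)\bar\sigma-\int\sigma|\p_s\tilde\xi|^2\,ds$ from \eqref{eq:sqr2} produces
$$c\dot c = c^2 - 2\pi^2\bar\sigma(1+c^2) + 2\pi^2\int_0^1\sigma|\p_s\tilde\xi|^2\,ds.$$
Since $\sigma>0$ by Remark \ref{rmk:sign_sigma}, the last term is nonnegative and I simply discard it.

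The crucial remaining step is a sharp upper bound for $\bar\sigma$ in the regime $c^2\approx 1$. By Proposition \ref{prop:mono}(ii), $\bar\sigma\leq\int_0^1|\xi|^2\,ds$; expanding $\xi=\tilde\xi+cw_0(\cdot+\theta)$ and using the orthogonality $\int\tilde\xi\cdot w_0\,ds=0$ from \eqref{eq:tilde_orth} together with $\int|w_0|^2\,ds=1/(4\pi^2)$ gives $4\pi^2\bar\sigma\leq 4\pi^2\int|\tilde\xi|^2\,ds + c^2$. Coupling this with the Poincar\'e-type inequality \eqref{eq:poincare} and \eqref{eq:tilde_equ1} yields the clean estimate $4\pi^2\bar\sigma\leq(1+3c^2)/4$, which after elementary algebra turns the formula above into
$$c\dot c \geq \frac{(3c^2-1)(1-c^2)}{8},$$
a quadratic in $c^2$ with roots $c^2=1/3$ and $c^2=1$.

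The final step is a short bootstrap, which I anticipate to be the main (though mild) subtle point. At $t=0$ one has $c(0)^2\geq 1/2$, and whenever $c(t)^2\geq 1/2$ the previous inequality gives $3c^2-1\geq 1/2>0$ and hence $c\dot c\geq 0$, so $c^2$ is non-decreasing. A continuity argument on the maximal subinterval of $[0,t_0]$ where $c^2\geq 1/2$ therefore forces that subinterval to be all of $[0,t_0]$, and on it we have $c\dot c\geq(1-c^2)/16$. Translating back, $\p_t(1-c^2)\leq -(1-c^2)/8$; integration and taking square roots yield the claimed monotonicity of $t\mapsto \|\p_s\tilde\xi(t,\cdot)\|_{L^2(\Sf)}$ together with the exponential bound at rate $1/16$.
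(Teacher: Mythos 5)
Your proposal is correct and follows essentially the same route as the paper: after reducing to $c\dot c$ via \eqref{eq:tilde_equ1}, you use the ODE for $\dot c$ together with \eqref{eq:sqr2} to eliminate the mixed term, drop the nonnegative $\int_0^1\sigma|\p_s\tilde\xi|^2\,ds$, and your bound $4\pi^2\bar\sigma\leq(1+3c^2)/4$ is exactly the paper's \eqref{eq:Sigma} rewritten via $\int_0^1|\p_s\tilde\xi|^2=1-c^2$, leading to the same quadratic differential inequality and the same bootstrap from $c(0)^2\geq\tfrac12$. The only cosmetic point is that the nonnegativity of $\sigma$ for the normalized flow is most directly quoted from Lemma \ref{lem:lower_sigma} rather than Remark \ref{rmk:sign_sigma}.
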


\begin{proof}
By \eqref{eq:tilde_equ1} and the expression of $\dot{c}$ in \eqref{eq:equ_eta},
\begin{align*}
\frac{1}{2}\p_t\int_0^1|\p_s\tilde{\xi}|^2 =-c(t)\dot{c}(t)=-\left(1-4\pi^2\bar\sigma\right)c^2 + 4\pi^2 c\int_0^1\sigma \p_sw_0\cdot \p_s\tilde{\xi}.
\end{align*}
Applying \eqref{eq:sqr2} to the last term in the above equation we obtain
\begin{equation}\label{eq:t_xi}
\begin{split}
\frac{1}{2}\p_t\int_0^1|\p_s\tilde{\xi}|^2&=-\left(1-4\pi^2\bar\sigma\right)c^2+4\pi^2\left(\frac{1-c^2}{2}\bar\sigma -\frac{1}{2}\int_0^1\sigma|\p_s\tilde{\xi}|^2\right)\\
&\leq -c^2+4\pi^2\bar\sigma c^2+4\pi^2\bar\sigma\frac{1-c^2}{2}.
\end{split}
\end{equation}

Next we note that
\begin{equation}\label{eq:Sigma}
4\pi^2\bar\sigma\leq 1-\frac{3}{4}\int_0^1|\p_s\tilde{\xi}|^2.
\end{equation}
Indeed, by (ii) of Proposition \ref{prop:mono} $\bar\sigma\leq \int_0^1|\xi|^2$, which in terms of $\tilde{\xi}$ and $c$ reads $\bar\sigma \leq \int_0^1|\tilde{\xi}|^2+\frac{c^2}{4\pi^2}$. By \eqref{eq:poincare}, $\bar\sigma\leq \frac{1}{16\pi^2}\int_0^1|\p_s\tilde{\xi}|^2 + \frac{c^2}{4\pi^2}$, which combined with \eqref{eq:tilde_equ1} gives \eqref{eq:Sigma}.\\
With \eqref{eq:Sigma} at hand, one can bound $\p_t\int_0^1|\p_s\tilde{\xi}|^2$ in terms of $\int_0^1|\p_s\tilde{\xi}|^2 ds$ as
\begin{equation}\label{eq:diff_inequ}
\frac{1}{2}\p_t\int_0^1|\p_s\tilde{\xi}|^2 ds\leq \int_0^1|\p_s\tilde{\xi}|^2 ds\left(-\frac{1}{4}+\frac{3}{8}\int_0^1|\p_s\tilde{\xi}|^2 ds\right).
\end{equation}
Indeed, since $c^2$ and $1-c^2$ are nonnegative (cf. \eqref{eq:tilde_equ1}), applying \eqref{eq:Sigma} to \eqref{eq:t_xi} we have that
\begin{align*}
\frac{1}{2}\p_t\int_0^1|\p_s\tilde{\xi}|^2\leq -c^2 + \left(1-\frac{3}{4}\int_0^1|\p_s\tilde{\xi}|^2\right)c^2 + \left(1-\frac{3}{4}\int_0^1|\p_s\tilde{\xi}|^2\right)\frac{1-c^2}{2}.
\end{align*}
Substituting $c^2$ by $1-\int_0^1|\p_s\tilde{\xi}|^2 ds$ (cf. \eqref{eq:tilde_equ1}) and after some algebraic manipulations we arrive at \eqref{eq:diff_inequ}.

Thus if $\int_0^1|\p_s\tilde{\xi}(0,s)|ds=1-c(0)^2\leq \frac{1}{2}$ by \eqref{eq:lower_bound_c}, then by \eqref{eq:diff_inequ} $\int_0^1|\p_s\tilde{\xi}(t,s)|ds\leq \frac{1}{2}$ for all $t\in [0,t_0]$. Moreover, $$\frac{1}{2}\p_t\int_0^1|\p_s\tilde{\xi}|^2 ds\leq -\frac{1}{16}\int_0^1|\p_s\tilde{\xi}|^2 ds$$ for all $t\in (0,t_0]$. Solving the above differential inequality we complete the proof.
\end{proof}

The exponential decay of $t\mapsto \|\p_s\tilde{\xi}(t,\cdot)\|_{L^2}$ immediately yields the convergence of the multiplicative factor $c(t)$ due to \eqref{eq:tilde_equ1}.

\begin{cor}\label{cor:c}
Under the same assumptions as in Lemma \ref{lem:exp_deri} we have
\begin{align*}
|c(t)^2-1|\leq e^{-\frac{1}{8}t}|c(0)^2-1| \text{ for all } t\in [0,t_0].
\end{align*}
\end{cor}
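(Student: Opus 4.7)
The plan is to read off the corollary almost directly from the identity \eqref{eq:tilde_equ1} together with the exponential decay provided by Lemma \ref{lem:exp_deri}. The key observation is that the pointwise constraint $|\p_s\xi|=|\p_sw_0|=1$, which was used to derive \eqref{eq:tilde_equ1}, gives
\begin{equation*}
1-c(t)^2=\int_0^1|\p_s\tilde\xi(t,s)|^2\,ds \geq 0,
\end{equation*}
so in particular $c(t)^2\le 1$ on $[0,t_0]$ and hence $|c(t)^2-1|=1-c(t)^2=\|\p_s\tilde\xi(t,\cdot)\|_{L^2(\Sf)}^2$.

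With that algebraic reduction in hand, I would simply square the bound of Lemma \ref{lem:exp_deri}, namely $\|\p_s\tilde\xi(t,\cdot)\|_{L^2(\Sf)}\le e^{-t/16}\|\p_s\tilde\xi(0,\cdot)\|_{L^2(\Sf)}$, to obtain
\begin{equation*}
\|\p_s\tilde\xi(t,\cdot)\|_{L^2(\Sf)}^2 \le e^{-t/8}\|\p_s\tilde\xi(0,\cdot)\|_{L^2(\Sf)}^2,
\end{equation*}
and then use \eqref{eq:tilde_equ1} at times $t$ and $0$ to rewrite both sides in terms of $c$. There is no real obstacle here; the only point requiring care is to verify the sign, i.e.\ that $c(t)^2\le 1$ so that the absolute value unfolds as $1-c(t)^2$, and this is immediate from \eqref{eq:tilde_equ1}. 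The hypotheses of Lemma \ref{lem:exp_deri} (the lower bounds on $\bar\sigma(0)$ and $c(0)^2$) are exactly the ones inherited under the assumptions of the corollary, so no further checking is needed.
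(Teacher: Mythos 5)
Your argument is correct and coincides with the paper's (the paper simply notes that the corollary follows immediately from \eqref{eq:tilde_equ1} and the exponential decay in Lemma \ref{lem:exp_deri}): squaring the bound $\|\p_s\tilde\xi(t,\cdot)\|_{L^2}\le e^{-t/16}\|\p_s\tilde\xi(0,\cdot)\|_{L^2}$ and substituting $\|\p_s\tilde\xi(t,\cdot)\|_{L^2}^2=1-c(t)^2$ at times $t$ and $0$ gives exactly the claimed estimate. Your extra care about the sign, i.e.\ that $1-c(t)^2\ge 0$ so the absolute value unfolds correctly, is a worthwhile observation but is the same reading of \eqref{eq:tilde_equ1} the paper relies on.
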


\begin{rmk}\label{rmk:differentiability}
The decay estimate on $c(t)$ implies that $t_0=T$, where $[0,t_0]$ is the interval of the definition of the solution to the ODE \eqref{eq:ode_ctheta} in Lemma \ref{lem:differentiability}. Indeed, since $c(t)$, $t\in [0,t_0]$, is bounded away from zero and has uniformly bounded modulus (cf. Corollary \ref{cor:c}), the solution to \eqref{eq:ode_ctheta} can be extended for longer time as long as $\|\xi(t,\cdot)\|_{C^{2+\alpha}(\Sf)}$ remains bounded.
\end{rmk}

In order to show the global existence we need to control the norm $\|\xi(t,s)\|_{2+\alpha,0}$ along the flow. For this it suffices to find a pointwise upper and lower bound on the ellipticity coefficient $\sigma(t,s)$. The next lemma says that if $\frac{1}{4\pi^2}-\bar\sigma$, $\|\tilde{\xi}\|_{L^2}$ and $\|\p_s\tilde{\xi}\|_{L^2}$ are sufficiently small at $t=0$, then the oscillation $\sigma(t,s)-\bar\sigma(t)$  and $|\xi|^2(t,s)-\int_0^1|\xi|^2 (t,s)ds$ are under control along the flow.

\begin{lem}\label{lem:para}
Given $\epsilon \in (0,\frac{1}{(32\pi^2)^2}]$, if
\begin{align}\label{eq:smallness}
\left(\frac{1}{4\pi^2}-\bar\sigma(0)\right)+
\left\|\tilde{\xi}(0,\cdot)\right\|^2_{L^2(\Sf)}+\left\|\p_s\tilde{\xi}(0,\cdot)\right\|^2_{L^2(\Sf)}\leq \epsilon,
\end{align}
then for all $(t,s)\in [0,T]\times \Sf$ we have
\begin{align*}
\left||\xi(t,s)|^2-\int_0^1|\xi(t,s)|^2ds \right|\leq 2\sqrt{\epsilon}, \quad  \left|\sigma(t,s)-\bar\sigma(t)\right|\leq 3\sqrt{\epsilon}.
\end{align*}
In particular, $\sigma$ satisfies
\begin{align*}
\frac{1}{4\pi^2}-4\sqrt{\epsilon}\leq \sigma(t,s)\leq \frac{1}{4\pi^2}+3\sqrt{\epsilon}\text{ for all }(t,s)\in [0,T]\times \Sf.
\end{align*}
\end{lem}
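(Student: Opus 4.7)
The plan is to use the monotonicity of $\|\tilde\xi\|_{L^2}$ and $\|\p_s\tilde\xi\|_{L^2}$ from Lemmas \ref{lem:exp_decay}--\ref{lem:exp_deri}, together with Proposition \ref{prop:mono}, to propagate the smallness of the nonradial part of $\xi$ along the whole flow, and then to derive a pointwise identity relating $\sigma$ to $|\xi|^2$ whose error is controlled by $\|\p_t\xi\|_{L^2}$. First I would verify that the hypotheses of Lemmas \ref{lem:exp_decay}--\ref{lem:exp_deri} follow from \eqref{eq:smallness}: since $\epsilon \leq (32\pi^2)^{-2}$ is tiny, $\bar\sigma(0) \geq \tfrac{1}{4\pi^2} - \epsilon \geq \tfrac{2}{3}\cdot \tfrac{1}{4\pi^2}$; and from the identity $\|\p_s\tilde\xi\|_{L^2}^2 + c^2 = 1$ at \eqref{eq:tilde_equ1} one gets $c(0)^2 \geq 1 - \epsilon \geq 1/2$. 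Consequently both $\|\tilde\xi(t,\cdot)\|_{L^2}^2$ and $\|\p_s\tilde\xi(t,\cdot)\|_{L^2}^2$ remain $\leq \epsilon$ on $[0,T]$.

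For the oscillation of $|\xi|^2$, I would expand $\xi = \tilde\xi + c w_0$; since $|w_0|^2 = (4\pi^2)^{-1}$ is constant and $\int w_0\cdot \tilde\xi\,ds = 0$ by the orthogonality \eqref{eq:tilde_orth}, the mean-free part reduces to
\[
|\xi|^2 - \int_0^1 |\xi|^2\,ds \;=\; \bigl(|\tilde\xi|^2 - \textstyle\int_0^1 |\tilde\xi|^2\,ds\bigr) + 2c\, w_0\cdot \tilde\xi.
\]
By the mean value theorem on $\Sf$, the first summand is bounded in $L^\infty$ by $\|\p_s(|\tilde\xi|^2)\|_{L^1} \leq 2\|\tilde\xi\|_{L^2}\|\p_s\tilde\xi\|_{L^2} \leq \epsilon$. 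For the second, since $\int \tilde\xi = 0$ a componentwise MVT argument gives $\|\tilde\xi\|_{L^\infty} \leq \|\p_s\tilde\xi\|_{L^2} \leq \sqrt\epsilon$; together with $|c|\leq 1$ and $|w_0|=(2\pi)^{-1}$ this yields $\|2c\, w_0\cdot\tilde\xi\|_{L^\infty} \leq \sqrt\epsilon/\pi$. Hence $\bigl||\xi|^2 - \int |\xi|^2\bigr| \leq \epsilon + \sqrt\epsilon/\pi \leq 2\sqrt\epsilon$ for $\epsilon \leq (32\pi^2)^{-2}$.

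For $\sigma - \bar\sigma$, I would multiply the equation $\p_t\xi = \p_s(\sigma\p_s\xi) + \xi$ by $\p_s\xi$ and use $|\p_s\xi|\equiv 1$ (which gives $\p_s\xi\cdot\p_{ss}\xi = 0$) together with $\xi\cdot\p_s\xi = \tfrac12\p_s|\xi|^2$ to obtain the key identity
\[
\p_s\bigl(\sigma + \tfrac12|\xi|^2\bigr) \;=\; \p_t\xi \cdot \p_s\xi.
\]
Since the function $\sigma + \tfrac12|\xi|^2$ is continuous on the circle it attains its mean at some $s^\ast \in \Sf$; integrating from $s^\ast$ to $s$ and using $|\p_s\xi|=1$ gives
\[
\bigl|\sigma(s) + \tfrac12|\xi(s)|^2 - \bar\sigma - \tfrac12 \textstyle\int_0^1|\xi|^2 ds\bigr| \;\leq\; \|\p_t\xi\|_{L^1} \;\leq\; \|\p_t\xi\|_{L^2}.
\]
Combining \eqref{eq:identity1} and \eqref{eq:identity2} yields the energy equality $\|\p_t\xi\|_{L^2}^2 = \|\xi\|_{L^2}^2 - \bar\sigma$; by Proposition \ref{prop:mono}(i) one has $\|\xi\|_{L^2}^2 \leq (4\pi^2)^{-1}$, while monotonicity in (ii) gives $\bar\sigma(t) \geq \bar\sigma(0) \geq (4\pi^2)^{-1} - \epsilon$. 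Therefore $\|\p_t\xi\|_{L^2} \leq \sqrt\epsilon$. Combined with the oscillation bound on $|\xi|^2$ established above, this gives $|\sigma - \bar\sigma| \leq \sqrt\epsilon + \tfrac12\cdot 2\sqrt\epsilon = 2\sqrt\epsilon \leq 3\sqrt\epsilon$. The pointwise two-sided bound on $\sigma$ is then immediate: $\sigma \leq \bar\sigma + 3\sqrt\epsilon \leq \|\xi\|_{L^2}^2 + 3\sqrt\epsilon \leq (4\pi^2)^{-1} + 3\sqrt\epsilon$ by Proposition \ref{prop:mono}(ii), and $\sigma \geq \bar\sigma - 3\sqrt\epsilon \geq (4\pi^2)^{-1} - \epsilon - 3\sqrt\epsilon \geq (4\pi^2)^{-1} - 4\sqrt\epsilon$. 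The only subtle step is the identification of $\|\p_t\xi\|_{L^2}^2$ via the two identities in Proposition \ref{prop:mono}'s proof; the rest is elementary MVT and Cauchy--Schwarz on the circle.
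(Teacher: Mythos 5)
Your proposal is correct and follows essentially the same route as the paper's proof: propagation of the smallness via the monotonicity lemmas, the decomposition $\xi=\tilde\xi+cw_0$ with the orthogonality conditions to control the oscillation of $|\xi|^2$, the key identity $\p_s\bigl(\sigma+\tfrac12|\xi|^2\bigr)=\p_t\xi\cdot\p_s\xi$, and the energy identity $\|\p_t\xi\|_{L^2}^2=\|\xi\|_{L^2}^2-\bar\sigma\leq\epsilon$. The only (cosmetic) differences are that you bound $\|\tilde\xi\|_{L^\infty}$ by $\|\p_s\tilde\xi\|_{L^2}$ via a zero-mean argument where the paper uses a slightly different estimate, and you obtain $\|\xi\|_{L^2}^2-\bar\sigma\leq\epsilon$ directly from Wirtinger and the monotonicity of $\bar\sigma$ rather than through the paper's expansion in terms of $\tilde\xi$ and $c$.
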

\begin{proof}
1. We observe that by the monotonicity properties from Proposition \ref{prop:mono}, Lemma \ref{lem:exp_decay} and Lemma \ref{lem:exp_deri} with $t_0=T$ (cf. Remark \ref{rmk:differentiability}), the smallness assumption \eqref{eq:smallness} indeed holds for all $t\in [0,T]$. Here we have used that \eqref{eq:smallness} implies \eqref{eq:lower_bound_c}.

2. We show that
\begin{align}\label{eq:ptxi}
\int_0^1|\p_t\xi(t,s)|^2 ds =\int_0^1|\xi(t,s)|^2 ds -\bar\sigma(t) \leq \epsilon \text{ for all } t\in (0,T].
\end{align}
Indeed, the first equality is due to \eqref{eq:identity1} and \eqref{eq:identity2}. To see the second inequality, we note that by the assumption \eqref{eq:smallness} and the observation above,
\begin{align*}
\int_0^1|\xi|^2 ds -\bar\sigma &= \left(\int_0^1|\xi|^2-\frac{1}{4\pi^2}\right)+\left(\frac{1}{4\pi^2}-\bar\sigma\right)\\
&=\int_0^1|\tilde{\xi}|^2 ds +(c^2-1)\frac{1}{4\pi^2}+\left(\frac{1}{4\pi^2}-\bar\sigma\right)\\
&=\int_0^1|\tilde{\xi}|^2 ds -\frac{1}{4\pi^2}\int_0^1|\p_s\tilde{\xi}|^2ds +\left(\frac{1}{4\pi^2}-\bar\sigma\right)\leq \epsilon,
\end{align*}
where in the second last inequality we have used \eqref{eq:tilde_equ1}.

3. We estimate the oscillation of $\sigma$ and $|\xi|^2$. First we show that the oscillation of $\sigma$ is bounded by the oscillation of $|\xi|^2$: for all $(t,s)\in [0,T]\times \Sf$
\begin{align}\label{eq:point_sigma}
\left|\sigma(t,s)-\bar\sigma(t)\right|\leq \sqrt{\epsilon}+\left|\frac{1}{2}|\xi(t,s)|^2-\frac{1}{2}\int_0^1|\xi(t,s)|^2ds\right|.
\end{align}
For this we multiply the equation of $\xi$ by $\p_s\xi$ and get $\p_t\xi\cdot \p_s\xi = \p_s\sigma + \xi\cdot \p_s\xi= \p_s\left(\sigma+\frac{1}{2}|\xi|^2\right)$. An integration in $s$ yields
\begin{align*}
\left|\sigma(t,s)+\frac{1}{2}|\xi(t,s)|^2 - \int_0^1\left(\sigma(t,s)+\frac{1}{2}|\xi(t,s)|^2\right) ds \right|\leq \int_0^1\left|\p_t\xi(t,s)\cdot \p_s\xi(t,s)\right| ds.
\end{align*}
By H\"older's inequality and \eqref{eq:ptxi},
\begin{align*}
\int_0^1\left|\p_t\xi(t,s)\cdot \p_s\xi(t,s)\right| ds \leq  \left(\int_0^1|\p_t\xi(t,s)|^2ds\right)^{1/2}\leq \sqrt{\epsilon}
\end{align*}
Combining the above two inequalities together we obtain \eqref{eq:point_sigma}.

Next we rewrite the oscillation of $|\xi|^2$ by using $|\tilde{\xi}|^2$:
\begin{align*}
\left|\frac{1}{2}|\xi(t,s)|^2-\frac{1}{2}\int_0^1|\xi(t,s)|^2ds\right|=\left|\frac{1}{2}|\tilde{\xi}(t,s)|^2-\frac{1}{2}\int_0^1|\tilde{\xi}(t,s)|^2ds+2c\tilde{\xi}\cdot w_0\right|.
\end{align*}
Using the fundamental theorem of calculus as well as the step 1, one can bound the oscillation of $|\tilde{\xi}|^2$ as
\begin{align*}
\left||\tilde{\xi}(t,s)|^2-\int_0^1|\tilde{\xi}(t,s)|^2ds \right|&\leq 2\int_0^1|\p_s\tilde{\xi}\cdot\tilde{\xi}| ds\leq 2\|\p_s\tilde{\xi}(t,\cdot)\|_{L^2}\|\tilde{\xi}(t,\cdot)\|_{L^2}\leq 2\epsilon.
\end{align*}
For the term $c\tilde{\xi}\cdot w_0$ we use $|c|\leq 1$, $|w_0|=\frac{1}{2\pi}$ and the above oscillation estimate to get
\begin{equation*}
|c\tilde{\xi}\cdot w_0|\leq \frac{1}{2\pi}|\tilde{\xi}(t,s)| \leq \frac{1}{2\pi}\left(\int_0^1|\tilde{\xi}(t,s)|^2ds+2\epsilon\right)^{1/2}\leq \frac{1}{2\pi}\sqrt{3\epsilon}.
\end{equation*}
Combining together we have for all $(t,s)\in [0,T]\times \Sf$
\begin{align*}
\left|\frac{1}{2}|\xi(t,s)|^2-\frac{1}{2}\int_0^1|\xi(t,s)|^2ds\right|&\leq \epsilon+2\frac{1}{2\pi}\sqrt{3\epsilon}\leq 2\sqrt{\epsilon}.
\end{align*}
Combining this with \eqref{eq:point_sigma} we obtain the desired oscillation estimate for $\sigma$.
Taking $\epsilon \leq \frac{1}{(32\pi^2)^2}$ and using the step 1 we obtain
the pointwise lower bound for $\sigma(t,s)$.
The upper bound follows from \eqref{eq:point_sigma} and Proposition \ref{prop:mono}.
\end{proof}

At the end of this section we show the global well-posedness of the normalized flow in the H\"older class $C^{2+\alpha,\alpha/2}$, under the assumption that initially the curve is sufficiently close to the stationary solution $w_0$.

\begin{thm}\label{thm:global}
Let $w_0$ be the stationary solution defined in \eqref{eq:w0}. Given an initial datum $\xi_0\in \tilde{\mathcal{A}}\cap C^{2+\alpha}(\Sf)$, which satisfies
\begin{align}\label{eq:smallness2}
\|\xi_0-w_0\|_{H^2(\Sf)}\leq \epsilon_0
\end{align}
for some small universal constant $\epsilon_0>0$,  the Cauchy problem \eqref{eq:wellposed} has a solution $$\xi\in C^{2+\alpha, \alpha/2}([0,\infty)\times \Sf), \p_t\xi\in C^{\alpha,\alpha/2}((0,\infty)\times \Sf).$$
\end{thm}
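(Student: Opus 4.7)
The plan is to show that the $H^2$-smallness hypothesis \eqref{eq:smallness2} forces the a priori estimates of Lemmas \ref{lem:exp_decay}, \ref{lem:exp_deri}, and especially Lemma \ref{lem:para} to apply throughout the evolution, yielding a uniform two-sided pointwise bound on the ellipticity coefficient $\sigma$ and in turn a uniform bound on $\|\xi(t,\cdot)\|_{C^{2+\alpha}(\Sf)}$. Once these a priori estimates are in hand, the local existence Theorem \ref{thm:wellposedness} can be iterated indefinitely with a time step bounded below by a positive constant, producing a classical solution on all of $[0,\infty)$.

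First I would verify that, upon choosing $\epsilon_0$ sufficiently small, the three hypotheses \eqref{eq:lower_bound_sigma}, \eqref{eq:lower_bound_c}, and \eqref{eq:smallness} hold simultaneously at $t=0$. Since $w_0$ is a stationary solution with $\sigma_{w_0}\equiv 1/(4\pi^2)$, the continuous dependence of the linear ODE \eqref{eq:eta_sigma} on its coefficient $|\p_{ss}\xi|^2$ (cf.\ Lemma \ref{lem:sigma}) gives $\|\sigma_{\xi_0}-1/(4\pi^2)\|_{L^\infty(\Sf)}\lesssim \|\xi_0-w_0\|_{H^2(\Sf)}$, so $\bar\sigma(0)$ is close to $1/(4\pi^2)$. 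Similarly, the $L^2$-projection of $\xi_0$ onto $\mathcal{C}$ depends continuously on $\xi_0$, so $c(0)^2$ is close to $1$ and $\|\tilde\xi(0,\cdot)\|_{H^1(\Sf)}$ is small. All three conditions then follow provided $\epsilon_0$ is small enough.

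Second, on the interval $[0,T]$ produced by Theorem \ref{thm:wellposedness}, Proposition \ref{prop:mono}(ii) together with Lemmas \ref{lem:exp_decay} and \ref{lem:exp_deri} shows that the quantity in \eqref{eq:smallness} is non-increasing in $t$; hence Lemma \ref{lem:para} applies throughout and yields uniform pointwise bounds $c_\ast\leq \sigma(t,s)\leq C_\ast$ independent of $T$. This uniform parabolicity, together with Lemma \ref{lem:sigma} (which bounds $\|\sigma(t,\cdot)\|_{C^{2+\alpha/2}}$ in terms of $\|\p_{ss}\xi(t,\cdot)\|_{C^\alpha}$) and parabolic Schauder estimates applied to \eqref{eq:xi0}, produces a uniform bound $\|\xi(t,\cdot)\|_{C^{2+\alpha}(\Sf)}\leq K$ with $K$ depending only on $\epsilon_0$ and $\|\xi_0\|_{C^{2+\alpha}}$, not on $T$. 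Since the time step in Theorem \ref{thm:wellposedness} depends only on $\|\xi(T,\cdot)\|_{C^{2+\alpha}}$, it is bounded below by a positive $\delta=\delta(K)$, and iteration extends the solution to $[0,\infty)$.

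The main obstacle is the uniform $C^{2+\alpha}$ estimate of the third step, because the regularities of $\xi$ and $\sigma$ are coupled: $\sigma$ solves an ODE whose coefficient $|\p_{ss}\xi|^2$ involves the highest spatial derivatives of $\xi$, while the parabolic regularity of $\xi$ in turn depends on the H\"older regularity of $\sigma$ and $\p_s\sigma$. Closing this bootstrap relies on the oscillation smallness of $\sigma$ from Lemma \ref{lem:para}, which makes \eqref{eq:xi0} a small perturbation of a constant-coefficient heat equation with a zero-order term whose sign is balanced by the Wirtinger-type inequality \eqref{eq:poincare}; it is here that the precise numerical thresholds in Lemma \ref{lem:para} become essential.
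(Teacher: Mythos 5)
Your overall architecture coincides with the paper's: verify that \eqref{eq:smallness2} forces the hypotheses \eqref{eq:lower_bound_sigma}, \eqref{eq:lower_bound_c} and \eqref{eq:smallness} at $t=0$ (via perturbation of the $\sigma$-equation around the constant solution $\tfrac{1}{4\pi^2}$ and continuity of the projection onto $\mathcal C$), propagate them in time by the monotonicity statements, invoke Lemma \ref{lem:para} for the two-sided pointwise bound on $\sigma$, obtain a uniform $C^{2+\alpha}$ bound, and iterate Theorem \ref{thm:wellposedness} with a time step bounded below. The first two stages match the paper's proof.

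The gap sits exactly in the step you yourself flag as the main obstacle. You propose to close the $\xi$--$\sigma$ regularity bootstrap by treating \eqref{eq:xi0} as a small perturbation of a constant-coefficient heat equation, on the strength of the oscillation bound $|\sigma-\bar\sigma|\leq 3\sqrt{\epsilon}$ from Lemma \ref{lem:para}. But a pointwise oscillation bound on $\sigma$ controls neither the drift coefficient $\p_s\sigma$ nor its H\"older norm, and these enter the non-divergence form $\p_t\xi=\sigma\p_{ss}\xi+\p_s\sigma\,\p_s\xi+\xi$; a priori $\|\p_s\sigma\|_{C^{\alpha}}$ is only controlled through $\|\p_{ss}\xi\|_{L^\infty}^2$ via the equation $\p_{ss}\sigma=\sigma|\p_{ss}\xi|^2-1$, which is precisely the quantity you are trying to bound, so smallness of the oscillation of $\sigma$ alone does not break the circle. (The remark about the sign of the zero-order term being balanced by \eqref{eq:poincare} is immaterial for a Schauder estimate on a bounded time interval.) The paper breaks the circularity by a different, cheaper observation: integrating $\p_{ss}\sigma-\sigma|\p_{ss}\xi|^2=-1$ over $\Sf$ gives $\int_{\Sf}\sigma|\p_{ss}\xi|^2\,ds=1$, hence $\int_{\Sf}|\p_{ss}\sigma|\,ds\leq 2$ and, since $\p_s\sigma$ has zero mean, a uniform Lipschitz bound on $\sigma$ that involves no high norm of $\xi$. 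Combined with the ellipticity bounds from Lemma \ref{lem:para}, the intermediate Schauder theory of \cite{lieberman1992} for coefficients of such low regularity then yields \eqref{eq:schauder}, a bound on $\sup_t\|\xi(t,\cdot)\|_{C^{2+\alpha}(\Sf)}$ purely in terms of $\|\xi\|_{L^\infty}$, which is in turn controlled by the decay estimates. To repair your argument you must either import this $L^1$-bound on $\p_{ss}\sigma$ (or the companion bound $\|\p_{ss}\xi\|_{L^2}^2\leq 1/c_*$) to quantify the coefficients independently of $\|\xi\|_{C^{2+\alpha}}$, or carry out the perturbation argument with explicit interpolation so that the $\|\p_{ss}\xi\|_{L^\infty}^2$ dependence is genuinely absorbed; as written the decisive estimate is asserted rather than proved.
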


\begin{proof}
Let $\xi\in C^{2+\alpha,\alpha/2}([0,T]\times \Sf)$, $T\in (0,1)$ depending on $\|\xi_0\|_{C^{2+\alpha}(\Sf)}$, be the classical solution to the Cauchy problem \eqref{eq:wellposed} with the initial datum $\xi_0$ (cf. Theorem \ref{thm:wellposedness}).  Let $\sigma$ be the Lagrange multiplier such that $\xi_0+\p_s(\sigma\p_s\xi_0)\in T_{\xi_0}\tilde{\mathcal{A}}$. Similar arguments as in Lemma \ref{lem:proj} yield
\begin{align}\label{eq:sig00}
\p_{ss}\sigma-|\p_{ss}\xi_0|^2\sigma=-1.
\end{align}
Note that from \eqref{eq:sig00} we have
$$\int_0^1\sigma=\|\p_s(\sigma\p_s\xi_0)\|_{L^2}^2\leq \|\xi_0\|_{L^2}^2,$$
which is no larger than $\frac{1}{4\pi^2}$ by Wirtinger's inequality. We want to show if $\epsilon_0$ is sufficiently small, then the smallness assumption \eqref{eq:smallness} from Lemma \ref{lem:para} is satisfied.

To see this, we first consider $\tilde{\xi}_0:=\xi_0-cw_0(\cdot+\theta)$, where $cw_0(\cdot+\theta)\in \argmin_{w\in \mathcal{C}} \|\xi_0-w\|_{L^2}$. Then \eqref{eq:smallness2} implies
$$\|\tilde{\xi}_0\|_{L^2(\Sf)}\leq \epsilon_0\text{ and }\|\p_s\tilde{\xi}_0\|_{L^2(\Sf)}\leq 2\pi\sqrt{\epsilon_0(1+\pi^{-1})}.$$
Indeed, the first inequality is immediate since by the definition of the projection $\|\tilde{\xi}_0\|_{L^2(\Sf)}\leq \|\xi-w_0\|_{L^2(\Sf)}$. The second inequality follows from \eqref{eq:tilde_equ1} having observed that \begin{multline*}\frac{c^2}{4\pi^2}=\|\xi_0\|_{L^2(\Sf)}^2-\|\tilde{\xi}_0\|_{L^2(\Sf)}^2\geq
\|w_0\|_{L^2(\Sf)}^2-2\|w_0\|_{L^2(\Sf)}\|\xi-w_0\|_{L^2(\Sf)}-\|\tilde{\xi}_0\|_{L^2(\Sf)}^2\\ \geq \frac 1 {4\pi^2}- \epsilon_0(1+\pi^{-1}).\end{multline*} Next, by viewing \eqref{eq:sig00} as a perturbation of
$$\p_{ss}\sigma-|\p_{ss}w_0|^2\sigma=\p_{ss}\sigma-4\pi^2\sigma=-1$$
 with periodic boundary conditions, whose solution is the constant function $\frac{1}{4\pi^2}$, we have that the solution to \eqref{eq:sig00} satisfies
\begin{align*}
\left|\frac{1}{4\pi^2}-\int_0^1\sigma ds\right|\leq C \epsilon_0
\end{align*}
for some universal constant $C>0$. Hence if $\epsilon_0$ is sufficiently small but universal, the smallness assumption \eqref{eq:smallness} holds.

Now we apply Lemma \ref{lem:para} to $\xi$ and get
\begin{equation}\label{eq:sig000}
\frac{1}{8\pi^2}\leq \sigma(t,s)\leq \frac{1}{2\pi^2}
\end{equation}
in $[0,T]\times \Sf$.
Furthermore, for each fixed $t$, $\sigma(t,\cdot)$ is Lipschitz continuous with uniformly bounded Lipschitz constant. Indeed, an integration of the equation of $\sigma$ (cf. \eqref{eq:eta_sigma}) yields $\int_{\Sf} \sigma |\p_{ss}\eta|^2 =1$, which implies that $\int_{\Sf}|\p_{ss}\sigma(t,s)|ds\leq 2$. By the regularity theory for the parabolic equations, cf. \cite{lieberman1992},
\begin{align}\label{eq:schauder}
\sup_{t\in [T/2,T]} \|\xi(t,\cdot)\|_{C^{2+\alpha}(\Sf)}\leq C (T^{-(2+\alpha)}+1)\|\xi\|_{L^\infty([T/4,T]\times \Sf)}
\end{align}
for some $C>0$ only depending on $d$ and $\alpha$ (this is the interior Schauder estimate for parabolic equations. Since our solutions satisfy periodic bound conditions, the estimate holds globally in $\Sf$). In the meanwhile, the smallness assumption \eqref{eq:smallness} of Lemma \ref{lem:para} is satisfied for all $t\in [0,T]$ due to the monotonicity properties of $\int_0^1\sigma(t,s)ds$, $\|\tilde{\xi}(t,\cdot)\|_{L^2(\Sf)}$ and $\|\p_s\tilde{\xi}(t,\cdot)\|_{L^2(\Sf)}$ (Proposition \ref{prop:mono} (ii), Lemma \ref{lem:exp_decay} and Lemma \ref{lem:exp_deri}). In particular,
\begin{equation}\label{eq:Linfty_xi}
\|\xi\|_{L^\infty([T/4,T]\times\Sf)}\leq \|cw_0\|_{L^\infty([T/4,T]\times\Sf)}+ \|\tilde{\xi}\|_{L^\infty([T/4,T]\times\Sf)}\leq \bar C,
\end{equation}
where $\bar C$ is an absolute constant.
We apply Theorem \ref{thm:wellposedness} starting from $t=T$. The solution $\xi$ extends in the H\"older class $C^{2+\alpha,\alpha/2}$ up to $t=T+T_0$ for some $T_0>0$. Lemma \ref{lem:para} applies and yields \eqref{eq:sig000} and \eqref{eq:Linfty_xi} in $[0,T+T_0]\times \Sf$. Again by the interior Schauder estimate
\begin{equation*}
\sup_{t\in [\frac{T+T_0}{2},T+T_0]} \|\xi(t,\cdot)\|_{C^{2+\alpha}(\Sf)}\leq C ((T+T_0)^{-(2+\alpha)}+1)\|\xi\|_{L^\infty([0,T+T_0]\times \Sf)}.
\end{equation*}
Since $\|\xi(t,\cdot)\|_{C^{2+\alpha}(\Sf)}$ cannot blow up as time getting large, the time steps have a uniform lower bound $\tilde{T}_0>0$. Repeating the above arguments from $T+2\tilde{T}_0$, $T+3\tilde{T}_0$ and so on, we obtain the global existence of the Cauchy problem \eqref{eq:wellposed} in the H\"older class $C^{2+\alpha,\alpha/2}$.
\end{proof}

As a by product we also obtain the exponential decay of our solution to the stationary solution $w_0$ under the initial smallness assumption.

\begin{thm}\label{thm:exp}
Under the same assumptions as in Theorem \ref{thm:global} we have for all $t>0$,
\begin{align*}
\|\xi(t,s)-w_0(s+\theta_\infty)\|_{L^\infty(\Sf)}\leq C e^{-t/16}\|\xi_0-w_0\|_{H^2(\Sf)}
\end{align*}
for some universal constant $C>0$ and some constant $\theta_\infty$.
\end{thm}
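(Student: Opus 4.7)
The plan is to combine the decomposition $\xi(t,s)=\tilde\xi(t,s)+c(t)w_0(s+\theta(t))$ from Lemma \ref{lem:differentiability} with the exponential decay estimates already proved. Global existence from Theorem \ref{thm:global} together with Remark \ref{rmk:differentiability} allows us to take $t_0=\infty$, so Lemmas \ref{lem:exp_decay}, \ref{lem:exp_deri} and Corollary \ref{cor:c} are in force for all $t\geq 0$. The only missing analytic ingredient is convergence of the phase $\theta(t)$. From the third equation of \eqref{eq:equ_eta}, together with the pointwise bounds $|w_0|=1/(2\pi)$ and $\sigma\leq 1/(2\pi^2)$ provided by Lemma \ref{lem:para}, the Cauchy--Schwarz inequality, and the lower bound $|c(t)|\geq 1/\sqrt{2}$ (which follows from Corollary \ref{cor:c} once the initial smallness \eqref{eq:smallness2} is imposed so that $c(0)$ is close to $1$), I would derive
\[
|\dot\theta(t)|\leq C\,\|\p_s\tilde\xi(t,\cdot)\|_{L^2(\Sf)}\leq Ce^{-t/16}\|\p_s\tilde\xi(0,\cdot)\|_{L^2(\Sf)}.
\]
Integrability on $[0,\infty)$ yields the existence of $\theta_\infty:=\lim_{t\to\infty}\theta(t)$ together with the estimate $|\theta(t)-\theta_\infty|\leq Ce^{-t/16}\|\p_s\tilde\xi(0,\cdot)\|_{L^2(\Sf)}$.

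Next I would split the difference into three pieces,
\[
\xi(t,s)-w_0(s+\theta_\infty)=\tilde\xi(t,s)+(c(t)-1)w_0(s+\theta(t))+\bigl(w_0(s+\theta(t))-w_0(s+\theta_\infty)\bigr),
\]
and bound each summand in $L^\infty(\Sf)$. Since $\int_0^1\tilde\xi(t,\cdot)\,ds=0$, the one-dimensional Sobolev/Wirtinger estimate gives $\|\tilde\xi(t,\cdot)\|_{L^\infty(\Sf)}\leq C\|\p_s\tilde\xi(t,\cdot)\|_{L^2(\Sf)}$, which decays like $e^{-t/16}$ by Lemma \ref{lem:exp_deri}. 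The second summand is controlled pointwise by $(1/(2\pi))|c(t)-1|\leq C|c(t)^2-1|\leq Ce^{-t/8}|c(0)^2-1|$ via Corollary \ref{cor:c}. The third summand is controlled by $|\theta(t)-\theta_\infty|\leq Ce^{-t/16}\|\p_s\tilde\xi(0,\cdot)\|_{L^2(\Sf)}$ using the Lipschitz constant $|\p_sw_0|\equiv 1$. All three pieces decay at rate at least $e^{-t/16}$.

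It remains to translate the initial quantities $|c(0)^2-1|$ and $\|\p_s\tilde\xi(0,\cdot)\|_{L^2(\Sf)}$ into $\|\xi_0-w_0\|_{H^2(\Sf)}$. By optimality of the $L^2$ projection onto $\mathcal{C}$ and the triangle inequality, $\|c(0)w_0(\cdot+\theta(0))-w_0\|_{L^2(\Sf)}\leq 2\|\xi_0-w_0\|_{L^2(\Sf)}$. A direct computation shows that the map $(c,\theta)\mapsto\|cw_0(\cdot+\theta)-w_0\|_{L^2(\Sf)}^2$ behaves quadratically in $(c-1,\theta)$ near $(1,0)$, yielding $|c(0)-1|+|\theta(0)|\leq C\|\xi_0-w_0\|_{L^2(\Sf)}$ and in particular $|c(0)^2-1|\leq C\|\xi_0-w_0\|_{L^2(\Sf)}$. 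Combined with the splitting $\p_s\tilde\xi(0,\cdot)=(\p_s\xi_0-\p_sw_0)+(\p_sw_0-c(0)\p_sw_0(\cdot+\theta(0)))$ and the smoothness of $\p_sw_0$, this produces $\|\p_s\tilde\xi(0,\cdot)\|_{L^2(\Sf)}\leq C\|\xi_0-w_0\|_{H^2(\Sf)}$. Plugging these into the preceding estimates delivers the claim. The main technical step is the phase convergence, since the exponential decay of $\tilde\xi$, $c$ and their conversion to $H^2$ initial norms are routine once the rate for $\theta$ is in place.
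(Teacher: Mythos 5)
Your proposal is correct and follows essentially the same route as the paper: the decomposition $\xi=\tilde\xi+c\,w_0(\cdot+\theta)$, the bound $|\dot\theta(t)|\lesssim \|\p_s\tilde\xi(t,\cdot)\|_{L^2}$ from the third equation of \eqref{eq:equ_eta} to get convergence and an exponential rate for $\theta(t)$, and the same three-term triangle-inequality splitting, each piece decaying like $e^{-t/16}$. Your final step — using the quadratic behaviour of $(c,\theta)\mapsto\|cw_0(\cdot+\theta)-w_0\|_{L^2}^2$ near $(1,0)$ to get $|c(0)-1|+|\theta(0)|\leq C\|\xi_0-w_0\|_{L^2}$ and hence $\|\p_s\tilde\xi(0,\cdot)\|_{L^2}\leq C\|\xi_0-w_0\|_{H^2}$ — is a welcome elaboration of a point the paper treats tersely (its proof of Theorem \ref{thm:global} only records the weaker bound $\|\p_s\tilde\xi_0\|_{L^2}\lesssim\sqrt{\epsilon_0}$), and it is what actually delivers the stated linear dependence on $\|\xi_0-w_0\|_{H^2(\Sf)}$.
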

\begin{proof}
By Lemma \ref{lem:exp_decay}, Lemma \ref{lem:para} and Corollary \ref{cor:c}, if $\epsilon_0:=\|\xi_0-w_0\|_{H^2(\Sf)}$ is sufficiently small (say $\epsilon_0\leq \frac{1}{(100\pi^2)^2}$), then for all $t>0$
\begin{align*}
\|\tilde{\xi}(t,\cdot)\|_{L^\infty(\Sf)}+(1-c^2(t))\leq Ce^{-\frac{1}{16}t}\epsilon_0
\end{align*}
for some absolute constant $C>0$. We still need to estimate the evolution of $\theta(t)$. Note that from the expression of $\dot{\theta}$ in \eqref{eq:equ_eta}
\begin{align*}
|\dot{\theta}(t)|\leq \frac{4\pi^2}{c(t)}\frac{1}{2\pi}\max_{s\in \Sf}\sigma(t,s)\|\p_s\tilde{\xi}(t,\cdot)\|_{L^2(\Sf)},\quad t>0.
\end{align*}
By \eqref{eq:point_sigma} if $\epsilon_0$ in \eqref{eq:smallness2} is sufficiently small, then $\sigma(t,s)\leq \int_0^1\sigma(t,s) ds + 3\sqrt{\epsilon_0}\leq \frac{1}{2\pi^2}$.  Thus combining Corollary \ref{cor:c} and Lemma \ref{lem:exp_deri} we infer
\begin{align*}
|\dot{\theta}(t)|\leq e^{-\frac{1}{16}t}\|\p_s\tilde{\xi}(0,\cdot)\|_{L^2(\Sf)},\quad t>0.
\end{align*}
This implies that $\lim_{t\rightarrow \infty}\theta(t)$ exists. Letting $\theta_\infty$ denote the limit we obtain
$$|\theta(t)-\theta_\infty|\leq 16e^{-\frac{1}{16}t}\|\p_s\tilde{\xi}(0,\cdot)\|_{L^2}\leq Ce^{-\frac{1}{16}t}\epsilon_0$$
after an integration from $t$ to $\infty$. Thus by the triangle inequality
\begin{align*}
\left|\xi(t,s)-w_0(s+\theta_\infty)\right|&\leq \left|\xi(t,s)-c(t)w_0(s+\theta(t))\right|+\left|(c(t)-1)w_0(s+\theta(t))\right|\\
&+\left|w_0(s+\theta(t))-w_0(s+\theta_\infty)\right|\\
&\leq |\tilde{\xi}(t,s)|+\frac{1}{2\pi}|c(t)-1|+ |\theta(t)-\theta_\infty|\\
&\leq Ce^{-\frac{1}{16}t}\epsilon_0.
\end{align*}
\end{proof}

\begin{rmk} The Hessian \eqref{Hes}  of the $L^2$-mass is strictly negative-definite on the tangent vectors
$\tilde\xi\in T_{w_0}\tilde{\mathcal A}$
which satisfy \eqref{eq:poincare}, i.e., on those which are orthogonal to the pure rotations:
\be\langle Hess\; M(w_0) \tilde\xi, \tilde\xi
 \rangle_{T_{w_0}\tilde{\mathcal{A}}}=\int_0^1 |\tilde\xi|^2(s)\, ds-\int_0^1 \varsigma(s) \,
ds\leq  -3\int_0^1  |\tilde\xi|^2(s)\, ds=-3 \langle \tilde\xi, \tilde\xi
  \rangle_{T_{w_0}\tilde{\mathcal{A}}}.\label{Hes1}\ee
 Here $\varsigma$ is the initial tension of a geodesic emanating from $w_0$ at the direction $\tilde\xi$, which satisfies \be\label{eq:geod1} \p_{ss}\varsigma- |\p_{ss} w_0|^2 \varsigma+|\p_s\tilde\xi|^2=0 \ee (see \cite{Preston-2011,SV17}). Indeed, since $|\p_{ss} w_0|\equiv 2\pi$, an integration of \eqref{eq:geod1} together with \eqref{eq:poincare} implies
 $$\int_0^1 \varsigma(s) \,
ds = \frac 1 {4\pi^2} \int_0^1 |\p_s\tilde\xi|^2(s)\, ds \geq 4 \int_0^1 |\tilde\xi|^2(s)\, ds,$$
which gives \eqref{Hes1}. Then one can anticipate the exponential decay (Theorem \ref{thm:exp}) of the gradient flow in a neighbourhood of $w_0$  via a Bakry-\'Emery argument, cf. \cite{villani03topics}. However, such argument is not applicablie in our situation since the Riemannian connection of $\tilde{\mathcal{A}}$ is not smooth and $(\tilde{\mathcal{A}}, d_{\tilde{A}})$ is not a geodesic metric space, cf. Theorem 4.2 in \cite{Preston-2012} and \cite{Mol17,BMM16}. \end{rmk}

\subsection{Global existence without restrictions on the initial data}  \label{sec:weak} We conclude by showing global solvability of the normalized flow in a generalized sense without any restrictions on the initial data. It is an adaptation of the approach we recently developed in \cite{ShV17} for a different gradient flow.

We begin by rewriting our flow in a form which explicitly involves the arc length parametrization constraint (cf. the beginning of the Section \ref{sec:normalize_flow}):
\begin{equation}\label{eq:weak}
\begin{split}
\begin{cases}
\p_t\xi&= \p_s(\sigma \p_s\xi) + \xi \\
|\p_s\xi| & =1
\end{cases}
\text{ for  } (t,s)\in Q_\infty:=(0,\infty)\times \Sf.
\end{split}
\end{equation}

\begin{defi}\label{defi:weak_sol}
Given an initial datum $\xi_0\in W^{1,\infty}(\Sf;\R^d)$ with  $|\p_s\xi_0(s)|\leq 1$ for a.e. $s\in \mathcal S^1$, we call a pair $(\xi,\sigma)$ a generalized solution to the normalized UCMCF if the following hold
\begin{itemize}
\item[(i)]  $\xi\in L^\infty_{loc}([0,\infty); W^{1,\infty}(\mathcal S^1))^d$, $\p_t\xi\in L^2_{loc}([0,\infty);L^2(\mathcal S^1))^d$, $\sigma\in L^2_{loc}([0,\infty); H^1(\mathcal S^1))$ and $\sigma\p_s\xi\in L^2_{loc}([0,\infty); H^1(\mathcal S^1))^d$.
\item[(ii)] The pair $(\xi, \sigma)$ satisfies
for a.e. $(t,s)\in Q_\infty$
\begin{align} \p_t\xi(t,s) = \p_s(\sigma(t,s) \p_s\xi(t,s)) &+ \xi,\label{eq:defii1}\\
\sigma(t,s) \left(|\p_s\xi(t,s)|^2 -1\right ) &=0,\label{eq:defii}\\
|\p_s\xi(t,s)|&\leq 1,\label{eq:defii2}
\end{align}
and the initial condition
\begin{align*}
\xi(0,s)=\xi_0(s) \text{ for a.e. }s.
\end{align*}
\item[(iii)] The solution $\xi$ satisfies the energy dissipation inequality
\begin{equation} \label{e:edeq}
\int_{\mathcal S^1} |\p_t\xi(t,s)|^2 ds\leq \int_{\mathcal S^1} \xi\cdot \p_t\xi(t,s) ds
\end{equation} for a.e. $t\in (0,\infty)$.
\end{itemize}
\end{defi}

\begin{rmk}[Strong and weak constraint]
The generalized solutions in Definition \ref{defi:weak_sol} are not required to satisfy the strong constraint  $|\p_s\xi|= 1$ but merely the relaxed one
$$
\sigma \left(|\p_s\xi|^2 -1\right ) =0,\quad  |\p_s\xi|\leq 1.
$$
In the next remark we will show that under the regularity assumptions $\xi\in C^1(\overline{Q_\infty})\cap C^2(Q_\infty)$ and $|\p_s\xi_0|=1$, the generalized solutions solve \eqref{eq:weak} in the classical sense. However, without the regularity assumptions we do not know whether the constraint $|\p_s\xi|=1$ is satisfied or not. \end{rmk}

\begin{rmk}[Relation with the classical solution]
\label{rmk:gene_sol}
It is not hard to see that if $(\xi,\sigma)$ is a $C^2$ regular solution to  \eqref{eq:weak}, then it is also a generalized solution in the sense of Definition \ref{defi:weak_sol}; in particular, \eqref{eq:defii2}
 and \eqref{e:edeq} become strict equalities. On the other hand we claim that any  generalized solution $(\xi,\sigma)$ with $\xi\in C^1(\overline{Q_\infty})\cap C^2(Q_\infty)$ and $|\p_s\xi_0|=1$ solves \eqref{eq:weak}.

Formally, this claim is a trivial consequence of \eqref{eq:defii} since $\sigma$ is expected to be strictly positive by the strong maximum principle, cf. Remark \ref{rmk:sign_sigma}. However, we cannot guarantee the strict positivity of $\sigma$ for the generalized solutions. Nevertheless, to prove the claim it suffices to show that the open set $U:=\{(t,s)\in Q_\infty: |\p_s\xi(t,s)|<1\}$ is empty. Suppose not, then $\sigma=0$ a.e. in $U$ due to  \eqref{eq:defii}. This implies that $\p_t\xi=\xi$ in $U$, whence $\p_t(|\p_s\xi|^2)=2 |\p_s\xi|^2$. For each $(t_0,s_0)\in U$, let $t_1=\inf\{t\geq 0: (t,t_0)\times\{ s_0\}\subset U\}$.
If $t_1=0$ then
\begin{equation}\label{e:ca2}
|\p_s\xi(t_1,s_0)|=1
\end{equation}
due to our assumption about $\xi_0$, and if $t_1>0$ then \eqref{e:ca2} also holds by the continuity of $\p_s\xi$. From $\p_t(|\p_s\xi|^2)\geq 0$ in $U$ we immediately deduce that \[|\p_s\xi(t_0,s_0)|^2\geq |\p_s\xi(t_1,s_0)|^2= 1,\] arriving at a contradiction.
\end{rmk}

The next theorem concerns the global existence of the generalized solution without any smallness or closeness assumption on the initial datum. We stress that the theorem does not cover Theorem \ref{thm:global} due to the relaxation of the unit speed constraint in \eqref{eq:weak}.

\begin{thm}\label{thm:globex} For every $\xi_0\in W^{1,\infty}(\Sf;\R^d)$ with $|\p_s\xi_0(s)|\leq 1$ for a.e. $s\in \Sf$, there exists a (global in time) generalized solution $(\xi,\sigma)$ to the normalized UCMCF, and  $\sigma(t,s)\geq 0$ for almost every $(t,s)\in Q_\infty$.
\end{thm}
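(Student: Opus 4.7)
My plan is to construct a generalized solution by approximating the constrained positive gradient flow of the $L^2$-mass by a sequence of unconstrained Hilbertian gradient flows obtained through penalization of the relaxed constraint $|\p_s \xi|^2 \le 1$, in the spirit of \cite{ShV17}. Fix a family of smooth nonnegative convex functions $\phi_\epsilon:\R\to[0,\infty)$ with $\phi_\epsilon \equiv 0$ on $(-\infty,1]$, $\phi_\epsilon'(r)>0$ for $r>1$, and $\phi_\epsilon(r)\to+\infty$ for $r>1$ as $\epsilon\to 0$ (for instance $\phi_\epsilon(r)=(2\epsilon)^{-1}(r-1)_+^2$), and consider the negative $L^2$-gradient flow of $F_\epsilon(\xi)=\tfrac12\int\phi_\epsilon(|\p_s\xi|^2)\,ds - \tfrac12\int|\xi|^2\,ds$, namely
\[ \p_t\xi_\epsilon = \p_s(\sigma_\epsilon \p_s\xi_\epsilon) + \xi_\epsilon, \qquad \sigma_\epsilon := \phi_\epsilon'(|\p_s\xi_\epsilon|^2)\ge 0, \qquad \xi_\epsilon|_{t=0}=\xi_0. \]
Global existence for each regularized problem follows from standard quasilinear parabolic theory, possibly after adding a vanishing viscosity $\epsilon\,\p_{ss}\xi_\epsilon$ to ensure uniform parabolicity away from the active set of the penalty.

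Second, I would establish $\epsilon$-independent estimates. The gradient-flow identity
\[ \int_0^T \|\p_t\xi_\epsilon(t)\|_{L^2}^2\,dt + F_\epsilon(\xi_\epsilon(T)) = F_\epsilon(\xi_0) = -\tfrac12\|\xi_0\|_{L^2}^2, \]
where the last equality uses $|\p_s\xi_0|\le 1$, combined with the preserved condition $\int\xi_\epsilon\,ds=0$ (argued as in Remark \ref{rmk:constraint}) and Wirtinger's inequality, should yield uniform bounds on $\xi_\epsilon\in L^\infty_t H^1_s$, on $\p_t\xi_\epsilon\in L^2_{t,s}$, and on $\int\phi_\epsilon(|\p_s\xi_\epsilon|^2)\,ds$. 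Contracting the PDE with $\p_s\xi_\epsilon$ reveals the elliptic equation satisfied by $\sigma_\epsilon$ (a perturbation of \eqref{eq:sigma0}); together with the bound on $\p_t\xi_\epsilon$, elliptic regularity then produces uniform bounds on $\sigma_\epsilon$ and $\sigma_\epsilon\p_s\xi_\epsilon$ in $L^2_t H^1_s$, matching the requirements of Definition \ref{defi:weak_sol}(i).

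Third, I would apply the Aubin--Lions lemma to extract a subsequence with $\xi_\epsilon\to\xi$ strongly in $C([0,T];L^2)$, $\p_s\xi_\epsilon \overset{*}{\rightharpoonup} \p_s\xi$ in $L^\infty$, $\p_t\xi_\epsilon\rightharpoonup\p_t\xi$ weakly in $L^2$, and $\sigma_\epsilon\rightharpoonup\sigma\ge 0$ in $L^2_t H^1_s$. Since $\phi_\epsilon$ forces $|\p_s\xi_\epsilon|\le 1$ asymptotically, the bound $|\p_s\xi|\le 1$ is inherited by the limit via lower semicontinuity; passing to the limit in the PDE yields \eqref{eq:defii1}. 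The energy inequality \eqref{e:edeq} follows by combining $\|\p_t\xi_\epsilon\|_{L^2}^2 = -\tfrac{d}{dt}F_\epsilon(\xi_\epsilon)$ with $\tfrac{d}{dt}M(\xi_\epsilon)=\int\xi_\epsilon\cdot\p_t\xi_\epsilon\,ds$ and the sign of $\phi_\epsilon$, upon weak lower semicontinuity of the kinetic term.

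The main obstacle is verifying the complementarity relation \eqref{eq:defii}: $\sigma(|\p_s\xi|^2-1)=0$. Since it is a product of two only weakly convergent sequences, it cannot be passed to the limit naively. I would upgrade $\p_s\xi_\epsilon$ to strong $L^2$ convergence via a Minty-type argument based on the monotonicity of $\phi_\epsilon'$, thereby identifying $\sigma_\epsilon\p_s\xi_\epsilon\cdot\p_s\xi_\epsilon\to\sigma|\p_s\xi|^2$ distributionally. Combining this with the pointwise convex-analytic inequality $\sigma_\epsilon(|\p_s\xi_\epsilon|^2-1)\ge \phi_\epsilon(|\p_s\xi_\epsilon|^2)\ge 0$, the fact that $\sigma_\epsilon\equiv 0$ wherever $|\p_s\xi_\epsilon|^2\le 1$, and the uniform bound on $\int\phi_\epsilon(|\p_s\xi_\epsilon|^2)\,ds$, should yield \eqref{eq:defii} together with the sign condition $\sigma\ge 0$ in the limit. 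This closure step is precisely where the relaxation of the strong constraint $|\p_s\xi|=1$ to the inequality $|\p_s\xi|\le 1$ is unavoidable without extra regularity.
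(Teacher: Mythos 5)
Your overall strategy --- replace the constrained flow by a family of unconstrained Hilbertian gradient flows of a perturbed energy, derive $\epsilon$-uniform energy and multiplier estimates, and pass to the limit by compactness --- is exactly the paper's, which in turn defers the implementation to \cite[Theorem 3]{ShV17}. The difference lies in the choice of regularization. You penalize the constraint via a convex penalty $\phi_\epsilon(|\p_s\xi|^2)$, i.e.\ you approximate the degenerate monotone graph $\{(v,\kappa):\kappa=\lambda v,\ \lambda\ge 0,\ \lambda(|v|^2-1)=0,\ |v|\le 1\}$ from the $v$-side; the paper introduces the dual variable $\kappa=\sigma\p_s\xi$ and inverts the strictly monotone map $F^\epsilon(\kappa)=\epsilon\kappa+\kappa/\sqrt{\epsilon+|\kappa|^2}$, which regularizes the same graph from the $\kappa$-side. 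Both are legitimate Moreau--Yosida-type approximations, but the paper's choice buys uniform parabolicity for fixed $\epsilon$ without extra viscosity (since $DG^\epsilon$ is bounded above and below) and keeps $|\p_s\xi^\epsilon|\le \epsilon|\kappa^\epsilon|+1$ essentially by construction, which simplifies the a priori bounds.

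Three steps of your sketch would not survive as written, although each has a working substitute. First, $\sigma_\epsilon=\phi_\epsilon'(|\p_s\xi_\epsilon|^2)$ is defined by an algebraic formula and does \emph{not} solve a perturbation of \eqref{eq:sigma0}: that elliptic equation is a consequence of the exact propagation of $|\p_s\xi|=1$, which fails for the penalized flow, so ``elliptic regularity'' cannot produce the multiplier bound. This matters because the energy estimate alone gives only $\|\sigma_\epsilon\|_{L^2_s}^2\sim \epsilon^{-1}\int\phi_\epsilon$, which blows up. The correct mechanism is the first integral obtained by contracting the equation with $\p_s\xi_\epsilon$, namely $\p_s\bigl(\sigma_\epsilon|\p_s\xi_\epsilon|^2-\tfrac12\phi_\epsilon(|\p_s\xi_\epsilon|^2)-\tfrac12|\xi_\epsilon|^2\bigr)=\p_t\xi_\epsilon\cdot\p_s\xi_\epsilon$, combined with testing the equation against $\xi_\epsilon$ to control the mean $\int_0^1\sigma_\epsilon|\p_s\xi_\epsilon|^2\,ds$; this is the analogue of \cite[Proposition 3.4]{ShV17}. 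Second, a Minty argument cannot upgrade $\p_s\xi_\epsilon$ to strong $L^2$ convergence: the monotone maps $v\mapsto\phi_\epsilon'(|v|^2)v$ vanish identically on $\{|v|\le1\}$, so no strong convergence is encoded there. What you can and should extract --- after identifying the weak limit of the product $\kappa_\epsilon\cdot\p_s\xi_\epsilon$ by a div--curl argument (using that $\p_s\kappa_\epsilon=\p_t\xi_\epsilon-\xi_\epsilon$ is bounded in $L^2$ and $\xi_\epsilon$ converges strongly) --- is the graph-limit inclusion of $(\p_s\xi,\kappa)$ in the subdifferential of the indicator of the closed unit ball; that inclusion \emph{is} \eqref{eq:defii} together with $\sigma\ge0$ and $|\p_s\xi|\le1$, so aim for it directly. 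Third, in passing \eqref{e:edeq} to the limit the offending term is $\tfrac{d}{dt}\int_0^1\phi_\epsilon(|\p_s\xi_\epsilon|^2)\,ds$, which has no sign pointwise in time; ``the sign of $\phi_\epsilon$'' only handles the initial instant. You need $\int_0^1\phi_\epsilon\,ds\to0$ in $L^1_{loc}(dt)$, which follows from $0\le 2\int\phi_\epsilon\le\int\sigma_\epsilon(|\p_s\xi_\epsilon|^2-1)\,ds$ once the limits of $\sigma_\epsilon|\p_s\xi_\epsilon|^2$ and $\sigma_\epsilon$ have been identified as equal --- i.e.\ it is again downstream of the complementarity step. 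None of these gaps is fatal, but each requires a different mechanism from the one you named.
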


The proof mimicks the one of \cite[Theorem 3]{ShV17} and has the following outline. We rewrite \eqref{eq:weak} as a first-order system, and approximate it by Hilbertian gradient flows. Let $\kappa:=\sigma \p_s\xi$, then the problem in the new variables $(\xi, \kappa, \sigma)$ would read
\begin{equation}\label{eq:whip2}
\begin{split}
\begin{cases}
\p_t\xi=\p_s\kappa+\xi\\
\kappa=\sigma\p_s\xi\\
\sigma=\kappa\cdot\p_s\xi.
\end{cases}
\end{split}
\end{equation}
For $\epsilon>0$, let
\begin{equation*}\label{eq:F_eps}
F^\epsilon:\R^d\rightarrow \R^d,\quad F^\epsilon(\kappa):=\epsilon\kappa+\frac{\kappa}{\sqrt{\epsilon+|\kappa|^2}},
\end{equation*}
$$G^\epsilon(\tau):=(F^\epsilon)^{-1}(\tau),$$
and consider the problem
\begin{equation}\label{eq:approx_whip}
\p_t\xi^\epsilon= \p_s(G^\epsilon(\p_s\xi^\epsilon))+\xi^\epsilon
\text{ in } Q_\infty.
\end{equation}
Let us introduce the functional
\begin{equation*}\label{eq:perturb_energy}
\mathcal{E}^\epsilon(\xi):=\int_{\Sf}\epsilon \left(\frac{|G^\epsilon(\p_s \xi)|^2}{2}-\frac{1}{\sqrt{\epsilon+|G^\epsilon(\p_s \xi)|^2}}\right)-\frac 1 2 |\xi|^2.
\end{equation*}
Then \eqref{eq:approx_whip} can be interpreted as a negative gradient flow of $\mathcal{E}^\epsilon$ with respect to the flat Hilbertian structure of $L^2(\Sf;\R^d)$.
The existence of a unique smooth solution $\xi^\epsilon: [0,\infty)\times [0,1]\rightarrow \R^d$ to \eqref{eq:approx_whip} follows from Amann's theory \cite{Amann}. The solutions satisfy uniform energy estimates as in \cite[Proposition 3.1]{ShV17}. Moreover, $\partial_s \xi^\epsilon$ has a uniform $L^\infty$ bound as in \cite[Proposition 3.3]{ShV17}.
We now set
\begin{equation*}\label{eq:kappa_sigma}
\begin{split}
\kappa^\epsilon := G^\epsilon(\p_s\xi^\epsilon),\quad
\sigma^\epsilon := G^\epsilon(\p_s\xi^\epsilon)\cdot \p_s\xi^\epsilon\geq 0.
\end{split}
\end{equation*}
Arguing as in the proof of  \cite[Proposition 3.4 and Theorem 3]{ShV17}, we can pass to the limit and obtain a solution $(\xi, \kappa, \sigma)$ to \eqref{eq:whip2}. The pair $(\xi, \sigma)$ solves the normalized UCMCF in the sense of Definition \ref{defi:weak_sol}. We refer to \cite{ShV17} for the full implementation.

\begin{rmk} \label{rmkback} One can adapt the approach of \cite[Section 6]{ShV17} to construct backward generalized solutions to the normalized UCMCF. It seems however that all one can get in this way is the trivial solution $(\xi,\sigma)(t)=(e^t \xi_0,0)$, $t\leq 0$. It satisfies \eqref{eq:defii1}--\eqref{e:edeq}, and is smooth provided $\xi_0$ is smooth, but obviously violates the strong constraint $|\p_s\xi|=1$. This contrasts with Remark \ref{rmk:gene_sol} and with \cite{ShV17} where smoothness implied the strong constraint. Consequently, the method of \cite[Section 6]{ShV17} for constructing two different solutions emanating from an initial datum $\xi_0$ with $|\p_s\xi_0|=1$ is not applicable. This leads us to conjecture the uniqueness of the generalized solutions to the normalized UCMCF.
\end{rmk}
\appendix
\section{Higher dimensional UCMCF} \label{higherdim}
In this Appendix we describe how our approach can be implemented in the case of evolution of surfaces. For the sake of simplicity of the presentation, we  work with embeddings of a compact manifold into the ambient space $\R^d$, but this can be generalized in various directions, in particular, one can consider immersions instead of embeddings.

\subsection{Riemannian structure} Fix a smooth compact connected $k$-dimensional submanifold $\mathcal M$ of $\R^d$.
Without loss of generality in the sequel we assume that $\vol(\mathcal{M})=1$.
 Let $\mathcal{K}_k$ be the space of $H^m$-regular embeddings $\eta: \mathcal M \to \R^d$, $\int_{\mathcal M} \eta\ d\mathcal H^k =0$, $m> \frac {n +2}2 $. Each element $v\in T_\eta\mathcal{K}_k$ can be identified with a vector field $v:\mathcal M \to \R^d$. We endow the space ${\mathcal{K}}_k$ with the parametrization-invariant $L^2$ Riemannian metric (cf. \cite{MM5})
\be
\langle v,w\rangle_{T_\eta\mathcal{K}_k}:=\int_{\eta(\mathcal{M})} (v\cdot w)\circ (\eta^{-1})\ d \mathcal H^{k}=\int_{\mathcal{M}} v\cdot w\ J\eta \ d\mathcal{H}^k, \label{eq:Rm}
\ee
which has a degenerate Riemannian distance \cite{BHM12}. Here $J\eta(x):=\sqrt{\det(d\eta_x)^\ast \circ (d\eta_x)}$ for each $x\in \mathcal{M}$ is the Jacobian of $\eta$.
Let $$\vol:\mathcal{K}_k\rightarrow \R, \ \vol(\eta)=\mathcal H^k({\eta(\mathcal M)} )$$ be the volume functional. By Sobolev embedding $H^{m}\subset C^1$, $\vol$ is continuous w.r.t. the $H^{m}$-topology of $\mathcal K_k$.
Sometimes we will also use a flat metric $\langle \cdot, \cdot \rangle^\ast$ on $\mathcal{K}_k$:\begin{align}\label{eq:metrickkf}
\langle v,w\rangle_{T_\eta \mathcal{K}_k}^*:=\int_{\mathcal M} v\cdot w\, d \mathcal H^k.
\end{align}

We consider the submanifold of $\mathcal K_k$ consisting of uniformly dilating embeddings, i.e.,
\begin{align*}
\mathcal{A}_k:=\{\eta\in \mathcal{K}_k: \vol(\eta)>0, \ \eta_\#(\mathcal H^{k}\mres \mathcal M)= \frac 1 {\vol(\eta)}\mathcal H^{k}\mres \eta(\mathcal M)\}.
\end{align*}
 Let us also define the submanifold of $\mathcal{A}_k$ consisting of volume-preserving embeddings
\begin{align} \label{volpres}
\tilde{\mathcal{A}}_k:=\{\eta\in \mathcal{A}_k, \ \vol(\eta)=1\}.
\end{align}
The tangent space at $\eta\in \tilde{\mathcal{A}}_k$ is
\begin{align*}
T_\eta\tilde{\mathcal{A}}_k=\{h\in T_\eta \mathcal{K}_k: \di _{\eta(\mathcal{M})} (h\circ \eta^{-1})=0\},
\end{align*}
thus it is not hard to verify that $T_\eta\mathcal{A}_k=\{h\in T_\eta \mathcal{K}_k: \di _{\eta(\mathcal{M})} (h\circ \eta^{-1})=const\}$.
The metrics \eqref{eq:Rm} and \eqref{eq:metrickkf} induce metrics on $\mathcal{A}_k$: for $\eta\in\mathcal{A}_k$
\begin{align}\label{eq:metrick}
\langle v,w\rangle_{T_\eta\mathcal{A}_k}=\int_{\mathcal M} v\cdot w\, \vol(\eta)\, d \mathcal H^k,
\end{align}
\begin{align}\label{eq:metrickf}
\langle v,w\rangle_{T_\eta\mathcal{A}_k}^*=\int_{\mathcal M} v\cdot w\, d \mathcal H^k.
\end{align}
The induced Riemannian metric on $\tilde{\mathcal{A}}_k$ (both from $\langle\cdot, \cdot\rangle$ and $\langle \cdot, \cdot \rangle^\ast$) is then \begin{align}\label{eq:metrickvp}
\langle v,w\rangle_{T_\eta\tilde{\mathcal{A}}_k}=\langle v,w \rangle^\ast_{T_\eta\tilde{\mathcal{A}}_k}=\int_{\mathcal M} v\cdot w\, d \mathcal H^k.
\end{align}

\subsection{Orthogonal projection} By \cite[Theorem 7]{BMM16}, the orthogonal projection $\tilde P_\eta:T_\eta \mathcal K_k\to T_\eta \tilde{\mathcal A}_k$ (with respect to the invariant $L^2$ metric $\langle\cdot, \cdot\rangle_{T_\eta\mathcal{K}_k}$) is
\be
\tilde P_\eta(z)=z-\sigma \vec H(\eta)- d \eta_\cdot \grad^g\sigma,\label{eq.projnor}
\ee
where $\vec H(\eta)$ is the vectorial mean curvature (the trace of the second fundamental form) corresponding to the embedding $\eta$, the metric $g=\eta^\ast \cdot$ is the pull-back of the Euclidean metric $\cdot$ on $\R^d$,
 and $\sigma:\mathcal M \to \R$ is a Lagrange multiplier, which is a solution to
 \begin{align*}
 \Delta_g \sigma -|\vec H(\eta)|^2 \sigma =\di_{\eta(\mathcal{M})}(z\circ\eta^{-1}).
 \end{align*}
Employing the methods of \cite{BMM16,Mol17}, one can derive from \eqref{eq.projnor} that the map $P_\eta: T_\eta\mathcal{K}_k\rightarrow T_\eta\mathcal{A}_k$,
\begin{equation}\label{eq.proj1k}
\begin{split}
P_\eta(z)=z-\sigma \vec H(\eta)- d \eta_\cdot \grad^g\sigma, \text{ where }\\
\Delta_g \sigma -|\vec H(\eta)|^2 \sigma =\di_{\eta(\mathcal{M})}(z\circ \eta^{-1})+const,\quad \int_{\mathcal M}\sigma\,d\mathcal H^k=0,
\end{split}
\end{equation}
is an orthogonal projection.
The key observations in the proof of this claim are that the volume density for $\eta\in \mathcal{A}_k$ is constant (equal to $\vol(\eta)$) and the identity following from the divergence formula
\begin{equation}\label{key}
\begin{split}
\int_{\mathcal M}w\cdot\left(\sigma\vec H(\eta)+ d \eta_\cdot \grad^g \sigma\right) \vol(\eta)\, d\mathcal H^k&=-\int_{\eta(\mathcal M)}(\sigma \circ\eta^{-1}) \di_{\eta(\mathcal{M})}(w\circ \eta^{-1}) \,d\mathcal H^k\\
&=-const\int_{\mathcal M}\sigma \vol(\eta)\,d\mathcal H^k
\end{split}
\end{equation}
for any $w\in T_\eta\mathcal{A}_k$. Here in the last equality we have used $w\in T_\eta\mathcal{A}_k$ and the characterization of the tangent space $T_\eta\mathcal{A}_k$.

\subsection{The gradient flow} The UCMCF is the gradient flow \be\partial_t \eta=- \grad_{\mathcal A_k} \vol(\eta) \label{eq:gf}\ee of the volume functional on the space $\mathcal{A}_k$ under the metric \eqref{eq:metrick}. By construction, the flow operator $$T_t:\eta(0,s)\mapsto \eta(t,s), \ s\in \mathcal M,$$ complies with \eqref{good}.

By the first variation of area formula, the negative $\mathcal K_k$-gradient of the volume functional is simply $\vec H$, and \be \langle \vec H (\eta), \eta\rangle_{T_\eta\mathcal{K}_k}=-k\vol(\eta).\label{heta}\ee With the projection \eqref{eq.proj1k} at hand, by an argument similar to the one from the Section \ref{1st}, we can express the gradient flow \eqref{eq:gf} in the form \be \label{eq:gf2}
\p_t\eta=\tilde\sigma \vec H (\eta)+ d \eta_\cdot \grad^g\tilde\sigma, \quad
\int_{\mathcal M}\tilde\sigma\,d\mathcal H^k=1,\quad \eta\in \mathcal A_k.
\ee

A direct computation yields that if a pair $(\eta(t),\tilde\sigma(t))$ solves \eqref{eq:gf2}, and $r:\mathcal M \to \mathcal M$ is a volume-preserving diffeomorphism, then $(\eta(t)\circ r,\tilde\sigma(t)\circ r)$ also solves \eqref{eq:gf2}.  Note that the reparametrizations $r$ which do not preserve the Hausdorff measure on $\mathcal M$ are ruled out automatically by our construction. Thus UCMCF is a truly geometric flow since it does not depend on possible reparametrizations of the evolving submanifold $\eta(t)(\mathcal M)$. This claim will become completely transparent after we recast our flow into a parametrization-free form \eqref{eq:gf5}.

Let $$M(\eta):=\frac 12 \int_{\mathcal M} |\eta|^2\, d \mathcal H^k$$
be the $L^2$-mass functional.
We are going to see that this functional decays with a constant speed along our gradient flow, cf. Proposition \ref{prop:L2}.
Indeed, since $\eta\in T_\eta\mathcal{A}_k$ (since $\di_{\eta(\mathcal{M})}(\eta\circ \eta^{-1})=k$),
\begin{equation*}
\begin{split}
 \partial_t M(\eta) &= \int_{\mathcal M} \eta\cdot \partial_t \eta\, d \mathcal H^k= \frac 1 {\vol (\eta)}  \langle \eta, \partial_t\eta\rangle_{T_\eta\mathcal{K}_k}= \frac 1 {\vol (\eta)}  \langle \eta, \partial_t\eta\rangle_{T_\eta\mathcal{A}_k}\\ &=-\frac 1 {\vol (\eta)}  \langle \eta, \grad_{\mathcal A_k} \vol(\eta) \rangle_{T_\eta\mathcal{A}_k}=-\frac 1 {\vol (\eta)} \langle \eta, \grad_{\mathcal K_k} \vol(\eta) \rangle_{T_\eta\mathcal{K}_k} \\
&=\frac 1 {\vol (\eta)} \langle \eta, \vec H(\eta) \rangle_{T_\eta\mathcal{K}_k} =-k
\end{split}
\end{equation*}
by \eqref{heta}. Thus, our flow collapses in finite time $t_*=\frac{1}{k}M(\eta_0)$.

\begin{rmk} \label{rimnond}  The Riemannian distances $d_{\mathcal{A}^*_k}$
and $d_{\tilde{\mathcal{A}}_k}$ on submanifolds of $\mathcal{K}_k$
are non-degenerate since they are controlled from below by the Hilbertian distance $d_{\mathcal{K}^*_k}$ (which is induced by the Riemannian metric $\langle \cdot, \cdot\rangle ^\ast$ in \eqref{eq:metrickf}). We do not know whether the Riemannian distance $d_{\mathcal{A}_k}$ is non-degenerate in general, but as we observed in Proposition \ref{rimnond1}, the conjecture is true for $k=1$. Nevertheless, we have got backup ways to render UCMCF as a gradient flow with respect to a non-degenerate distance. Namely, the gradient flow
\be\partial_t \eta=- \grad_{\mathcal A^*_k} \left(\ln \vol(\eta)\right) \label{eq:gfr}\ee
reproduces \eqref{eq:gf}. This is immediate by observing that $$\grad_{\mathcal A_k}  \vol(\eta)=\frac 1 {\vol (\eta)}\grad_{\mathcal A^*_k}  \vol(\eta)= \grad_{\mathcal A^*_k} \left(\ln \vol(\eta)\right).$$ Other options are presented in Appendix \ref{sec:mult}. \end{rmk}

\section{Normalized flow}\label{sec:mult}
The general higher dimensional UCMCF \eqref{eq:gf2} can be renormalized in the same way as the curve-shortening flow. Namely, the new time is
\begin{align*}
\tau(t):=-\ln \vol(\eta(t)),
\end{align*}
and the new unknown functions are
$$
\xi(\tau, s ):=\frac{\eta(t(\tau),s)}{\vol(\eta(t(\tau)))},\
\sigma(\tau, s)=\frac{\tilde{\sigma}(t(\tau),s)}{-(\vol\p_t\vol)(\eta(t(\tau)))}.$$

Then the pair $(\xi,\sigma)$ solves the equation
\be \label{eq:gf3}
\p_\tau\xi=\xi+\sigma \vec H (\xi)+ d \xi_\cdot \grad^{\xi^\ast \cdot}\sigma,\quad \xi\in \tilde{\mathcal A}_k.
\ee Employing the characterization of the orthogonal projection $\tilde P_\xi:T_\xi \mathcal K_k\to T_\xi \tilde{\mathcal A}_k$, we immediately rewrite \eqref{eq:gf3} as a positive gradient flow of the $L^2$-mass: \be\partial_\tau \xi=\tilde P_\xi \xi=\tilde P_\xi \grad_{\mathcal{K}_k} M(\xi) = \grad_{\tilde{\mathcal A}_k} M(\xi). \label{eq:gf4}\ee

\subsection{Evolution of the averaged Lagrange multiplier} In order to illustrate the power of the gradient flow structure \eqref{eq:gf4}, we will formally derive a neat formula for the evolution of the mean of $\sigma$ along the UCMCF trajectories, thereby generalizing (ii) in Proposition \ref{prop:mono}. We first observe that the geodesics in $\tilde{\mathcal A}_k$ are determined by the condition $\p_{\tau\tau}\gamma \perp T_{\gamma}\tilde{\mathcal A}_k $, which can be expressed as \be \label{eq:geoda}
\p_{\tau\tau}\gamma=\varsigma \vec H (\gamma)+ d \gamma_\cdot \grad^{\gamma^\ast \cdot}\varsigma,\quad \gamma\in \tilde{\mathcal A}_k,
\ee
cf. \cite{BMM16,Mol17}. Then we can calculate the Hessian of the $L^2$-mass, taking into account \eqref{key} (with $w=\gamma$ and $\di_{\gamma(\mathcal{M})}(\gamma\circ\gamma^{-1})=k$):
\begin{equation} \label{Hes}
\begin{split}
\langle Hess\; M(\gamma) \dot\gamma, \dot\gamma \rangle_{T_{\gamma}\tilde{\mathcal A}_k}=\frac {d^2 M (\gamma(t))} {d^2t}&=\int_{\mathcal M} \p_{\tau\tau}\gamma\cdot\gamma +\p_\tau\gamma\cdot\p_\tau\gamma\, d \mathcal H^k\\
&=-k\int_{\mathcal M} \varsigma \, d \mathcal H^k+\langle \dot\gamma, \dot\gamma \rangle_{T_{\gamma}\tilde{\mathcal A}_k}.
\end{split}
\end{equation}
Now we compute in two different ways the second time derivative of the $L^2$-mass along a trajectory $\xi(t)$ of the gradient flow \eqref{eq:gf4}. On one hand,
\begin{equation} \label{f33}
\begin{split}
\frac {d^2 M (\xi)} {d^2t}&=\frac d {dt} \langle \grad_{\tilde{\mathcal A}_k} M(\xi), \grad_{\tilde{\mathcal A}_k} M(\xi)
\rangle_{T_{\xi}\tilde{\mathcal A}_k}=2\langle Hess\; M(\xi) \dot\xi, \dot\xi \rangle_{T_{\xi}\tilde{\mathcal A}_k}\\
&=-2k\int_{\mathcal M} \varsigma \, d \mathcal H^k
+2 \langle \dot\xi, \dot\xi
\rangle_{T_{\xi}\tilde{\mathcal A}_k}\\
&=-2k\int_{\mathcal M}
\varsigma \, d \mathcal H^k+2 \langle \grad_{\tilde{\mathcal A}_k} M(\xi), \grad_{\tilde{\mathcal A}_k} M(\xi) \rangle_{T_{\xi}\tilde{\mathcal A}_k}\\
&= -2k\int_{\mathcal M} \varsigma \, d \mathcal H^k+2 \frac {dM(\xi)} {dt},
\end{split}
\end{equation}
where $\varsigma(t)$ is the Lagrange multiplier (which may be referred to as the tension) corresponding to the geodesic passing through $\xi(t)$ at the direction $\dot\xi(t)$, see \eqref{eq:geoda}. On the other hand, employing \eqref{key} and orthogonality of the projection $\tilde P_\xi$, we find that
\begin{equation} \label{f34}
\begin{split}
\langle \grad_{\tilde{\mathcal A}_k} M(\xi), \grad_{\tilde{\mathcal A}_k} M(\xi) \rangle_{T_{\xi}\tilde{\mathcal A}_k} =\langle \tilde P_\xi \xi, \tilde P_\xi  \xi \rangle_{L^2(\mathcal M)}=\langle \xi, \xi \rangle_{L^2(\mathcal M)}+\langle  \xi, \tilde P_\xi  \xi- \xi\rangle_{L^2(\mathcal M)} \\ = \langle \xi, \xi \rangle_{L^2(\mathcal M)}+\left\langle  \xi,  \sigma \vec H (\xi)+ d \xi_\cdot \grad^{\xi^\ast \cdot}\sigma\right\rangle_{L^2(\mathcal M)}=2M(\xi)-k\int_{\mathcal M} \sigma \,d\mathcal H^k.
\end{split}
\end{equation}
This yields the upper bound
\be k\int_{\mathcal M} \sigma \,d\mathcal H^k\leq 2M(\xi).\ee
Differentiating \eqref{f34} in time and comparing with \eqref{f33}, we deduce \be\frac d {dt} \int_{\mathcal M} \sigma \, d \mathcal H^k= 2\int_{\mathcal M} \varsigma \, d \mathcal H^k.\ee In the particular case $\mathcal M=\mathcal S^1$, it is known \cite{Preston-2011,SV17} that the Lagrange multipliers $\varsigma$ related to the geodesics are always non-negative, so we infer (ii) in Proposition \ref{prop:mono}.

\subsection{Relation with the optimal transport} \label{opt.transport} We now establish a link with the  optimal transport theory \cite{villani03topics,villani08oldnew} by explaining how our flow \eqref{eq:gf4} may be formally viewed as a gradient flow on a submanifold of the Wasserstein space. We recall \cite{otto01,villani03topics} that the space $\mathcal P_2(\R^d)$ of probability measures with finite second moments admits a formal Riemannian structure so that the $2$-Wasserstein distance coincides with the geodesic distance. The mapping $$\Pi: \tilde{\mathcal A}_k\to \mathcal P_2(\R^d);\quad \Pi(\xi)=\xi_\#(\mathcal H^{k}\mres \mathcal M)=\mathcal H^{k}\mres \xi(\mathcal M)$$ is a restriction of Otto's Riemannian submersion \cite{otto01}. We refer to \cite{Kli82} for a basic introduction to Riemannian submersions.  Actually, $\Pi$ is a bijection up to volume-preserving diffeomorphisms on $\mathcal{M}$. Set $\rho=\Pi(\xi)$, where $\xi=\xi(\tau)$ is a solution to the normalized UCMCF. Since the $L^2$-mass functional $M$ is invariant with respect to volume-preserving changes of variables on $\mathcal M$, it is consistent to define $M(\rho):=M(\xi)$. Set $\mathcal B=\Pi(\tilde{\mathcal A}_k)$. Then $\mathcal B$ may be formally viewed as a submanifold of $\mathcal P_2(\R^d)$. We claim that the evolution of $\rho$ satisfies \be\partial_\tau \rho= \grad_{\mathcal B} M(\rho). \label{eq:gf5}\ee Indeed, fix time $\tau_0$, $\xi=\xi(\tau_0)$, and $\rho=\Pi(\xi(\tau_0))$. We need to show that \be\langle\partial_\tau \rho,v\rangle_{T_\rho {\mathcal P_2}}= \langle\grad_{\mathcal B} M(\rho),v\rangle_{T_\rho {\mathcal P_2}} \ee for an arbitrary $v\in T_\rho {\mathcal B}$. Let $\tilde\rho(\tau)$ be a curve in $\mathcal B$ satisfying $\tilde\rho(\tau_0)=\rho$, $\p_\tau\tilde\rho(\tau_0)=v$. Let $\tilde\xi$ be the horizontal lift of the curve $\tilde\rho$ with respect to the submersion $\Pi$ passing through $\xi$ at $\tau_0$. Denote by $\p_\tau \xi_h$ the horizontal component of $\p_\tau \xi$. Then at time $\tau_0$ \begin{multline*}\langle\partial_\tau \rho,v\rangle_{\rho}=\langle\partial_\tau \Pi(\xi),\partial_\tau \Pi(\tilde\xi)\rangle_{\rho}=\langle d\Pi(\xi)\cdot \p_\tau \xi,d\Pi(\xi)\cdot \p_\tau \tilde\xi\rangle_{\rho}= \langle \p_\tau \xi_h,\p_\tau \tilde\xi\rangle_{\xi}=\langle \p_\tau \xi,\p_\tau \tilde\xi\rangle_{\xi}\\=\langle \grad_{\tilde{\mathcal A}_k} M(\xi),\p_\tau \tilde\xi\rangle_{\xi}=\p_\tau M(\tilde\xi)=\p_\tau M(\tilde\rho)=\langle \grad_{\mathcal B} M(\rho),\p_\tau \tilde\rho\rangle_{\rho}=\langle\grad_{\mathcal B} M(\rho),v\rangle_{\rho}. \end{multline*} Note that the geodesic distance on $\mathcal B$ is a priori non-degenerate since it is controlled from below by the $2$-Wasserstein distance. The strategy above is applicable to the unnormalized flow. Indeed, the continuation of $\Pi$ defined by $$\Pi: \mathcal A_k^*\to \mathcal P_2(\R^d);\quad \Pi(\eta)=\eta_\#(\mathcal H^{k}\mres \mathcal M)$$ is still a restricted Otto's submersion. Then $\mathcal B^*=\Pi(\mathcal A_k^*)$ may be formally viewed as a submanifold of $\mathcal P_2(\R^d)$, and we are allowed to set $\vol(\rho):=\vol(\eta)$ for $\rho=\Pi(\eta)\in \mathcal B^*$. Then \eqref{eq:gfr} can be recast as \be\partial_t \rho=- \grad_{\mathcal B^*} \ln(\vol(\rho)). \label{eq:gf55}\ee

\subsection*{Acknowledgment}
The first author would like to thank Herbert Koch for helpful discussions related to local and global well-posedness issues. The research was partially supported by CMUC -- UID/MAT/00324/2013, funded by the Portuguese Government through FCT/MCTES and by the ERDF through PT2020, and by the FCT (TUBITAK/0005/2014).

\subsection*{Conflict of interest statement}

We have no conflict of interest to declare.


\begin{thebibliography}{10}

\bibitem{AL86}
U.~Abresch and J.~Langer.
\newblock The normalized curve shortening flow and homothetic solutions.
\newblock {\em J. Differential Geom.}, 23(2):175--196, 1986.

\bibitem{Alm93}
F.~Almgren, J.~E. Taylor, and L.~Wang.
\newblock Curvature-driven flows: a variational approach.
\newblock {\em SIAM J. Control Optim.}, 31(2):387--438, 1993.

\bibitem{AG92}
S.~J. Altschuler and M.~A. Grayson.
\newblock Shortening space curves and flow through singularities.
\newblock {\em J. Differential Geom.}, 35(2):283--298, 1992.

\bibitem{Amann}
H.~Amann.
\newblock Nonhomogeneous linear and quasilinear elliptic and parabolic boundary
  value problems.
\newblock In {\em Function spaces, differential operators and nonlinear
  analysis ({F}riedrichroda, 1992)}, volume 133 of {\em Teubner-Texte Math.},
  pages 9--126. Teubner, Stuttgart, 1993.

\bibitem{AGS06}
L.~Ambrosio, N.~Gigli, and G.~Savar{\'e}.
\newblock {\em Gradient Flows: in Metric Spaces and in the Space of Probability
  Measures}.
\newblock Basel: Birkh{\"a}user Basel, 2008.

\bibitem{AS96}
L.~Ambrosio and H.~M. Soner.
\newblock Level set approach to mean curvature flow in arbitrary codimension.
\newblock {\em J. Differential Geom.}, 43(4):693--737, 1996.

\bibitem{A91i}
S.~Angenent.
\newblock Parabolic equations for curves on surfaces. {I}. {C}urves with
  {$p$}-integrable curvature.
\newblock {\em Ann. of Math. (2)}, 132(3):451--483, 1990.

\bibitem{A91ii}
S.~Angenent.
\newblock Parabolic equations for curves on surfaces. {II}. {I}ntersections,
  blow-up and generalized solutions.
\newblock {\em Ann. of Math. (2)}, 133(1):171--215, 1991.

\bibitem{BBM14}
M.~Bauer, M.~Bruveris, and P.~W. Michor.
\newblock Overview of the geometries of shape spaces and diffeomorphism groups.
\newblock {\em J. Math. Imaging Vision}, 50(1-2):60--97, 2014.

\bibitem{BHM12}
M.~Bauer, P.~Harms, and P.~W. Michor.
\newblock Almost local metrics on shape space of hypersurfaces in {$n$}-space.
\newblock {\em SIAM J. Imaging Sci.}, 5(1):244--310, 2012.

\bibitem{BMM16}
M.~Bauer, P.~W. Michor, and O.~M\"uller.
\newblock Riemannian geometry of the space of volume preserving immersions.
\newblock {\em Differential Geom. Appl.}, 49:23--42, 2016.

\bibitem{B78}
K.~A. Brakke.
\newblock {\em The motion of a surface by its mean curvature}, volume~20 of
  {\em Mathematical Notes}.
\newblock Princeton University Press, Princeton, N.J., 1978.

\bibitem{BD18}
Y.~Brenier and X.~Duan.
\newblock From conservative to dissipative systems through quadratic change of
  time, with application to the curve-shortening flow.
\newblock {\em Arch. Ration. Mech. Anal.}, 227(2):545--565, 2018.

\bibitem{CJS16}
C.~Chainais-Hillairet, A.~Jüngel, and S.~Schuchnigg.
\newblock Entropy-dissipative discretization of nonlinear diffusion equations
  and discrete beckner inequalities.
\newblock {\em ESAIM: M2AN}, 50(1):135--162, 2016.

\bibitem{CGG91}
Y.~G. Chen, Y.~Giga, and S.~Goto.
\newblock Uniqueness and existence of viscosity solutions of generalized mean
  curvature flow equations.
\newblock {\em J. Differential Geom.}, 33(3):749--786, 1991.

\bibitem{Cold}
T.~H. Colding, W.~P. Minicozzi, II, and E.~K. Pedersen.
\newblock Mean curvature flow.
\newblock {\em Bull. Amer. Math. Soc. (N.S.)}, 52(2):297--333, 2015.

\bibitem{D97}
K.~Deckelnick.
\newblock Weak solutions of the curve shortening flow.
\newblock {\em Calc. Var. Partial Differential Equations}, 5(6):489--510, 1997.

\bibitem{Ev73}
E.~Evans.
\newblock A new material concept for the red cell membrane.
\newblock {\em Biophysical journal}, 13(9):926--940, 1973.

\bibitem{ES91}
L.~C. Evans and J.~Spruck.
\newblock Motion of level sets by mean curvature. {I}.
\newblock {\em J. Differential Geom.}, 33(3):635--681, 1991.

\bibitem{Fol94}
G.~Foltin.
\newblock Dynamics of incompressible fluid membranes.
\newblock {\em Physical Review E}, 49(6):5243, 1994.

\bibitem{GS79}
R.~E. Greene and K.~Shiohama.
\newblock Diffeomorphisms and volume-preserving embeddings of noncompact
  manifolds.
\newblock {\em Trans. Amer. Math. Soc.}, 255:403--414, 1979.

\bibitem{H15}
J.~H{\"a}ttenschweiler.
\newblock {\em Curve Shortening Flow in Higher Dimension}.
\newblock PhD thesis, ETH, 2015.

\bibitem{Kli82}
W.~Klingenberg.
\newblock {\em Riemannian geometry}, volume~1 of {\em de Gruyter Studies in
  Mathematics}.
\newblock Walter de Gruyter \&\ Co., Berlin-New York, 1982.

\bibitem{Koi96}
N.~Koiso.
\newblock On the motion of a curve towards elastica.
\newblock In {\em Actes de la {T}able {R}onde de {G}\'eom\'etrie
  {D}iff\'erentielle ({L}uminy, 1992)}, volume~1 of {\em S\'emin. Congr.},
  pages 403--436. Soc. Math. France, Paris, 1996.

\bibitem{KMV16A}
S.~Kondratyev, L.~Monsaingeon, and D.~Vorotnikov.
\newblock A new optimal transport distance on the space of finite {R}adon
  measures.
\newblock {\em Adv. Differential Equations}, 21(11-12):1117--1164, 2016.

\bibitem{Kry96}
N.~V. Krylov.
\newblock {\em Lectures on elliptic and parabolic equations in {H}\"older
  spaces}, volume~12 of {\em Graduate Studies in Mathematics}.
\newblock American Mathematical Society, Providence, RI, 1996.

\bibitem{Lang}
S.~Lang.
\newblock {\em Fundamentals of differential geometry}, volume 191.
\newblock Springer Science \& Business Media, 2012.

\bibitem{lieberman1992}
G.~M. Lieberman.
\newblock Intermediate schauder theory for second order parabolic equations.
  iv. time irregularity and regularity.
\newblock {\em Differential Integral Equations}, 5(6):1219--1236, 1992.

\bibitem{LS95}
S.~Luckhaus and T.~Sturzenhecker.
\newblock Implicit time discretization for the mean curvature flow equation.
\newblock {\em Calc. Var. Partial Differential Equations}, 3(2):253--271, 1995.

\bibitem{MM5}
P.~W. Michor and D.~Mumford.
\newblock Vanishing geodesic distance on spaces of submanifolds and
  diffeomorphisms.
\newblock {\em Doc. Math.}, 10:217--245, 2005.

\bibitem{MM06}
P.~W. Michor and D.~Mumford.
\newblock Riemannian geometries on spaces of plane curves.
\newblock {\em J. Eur. Math. Soc. (JEMS)}, 8(1):1--48, 2006.

\bibitem{MM07}
P.~W. Michor and D.~Mumford.
\newblock An overview of the {R}iemannian metrics on spaces of curves using the
  {H}amiltonian approach.
\newblock {\em Appl. Comput. Harmon. Anal.}, 23(1):74--113, 2007.

\bibitem{MS01}
K.~Mikula and D.~Sevcovic.
\newblock Evolution of plane curves driven by a nonlinear function of curvature
  and anisotropy.
\newblock {\em SIAM J. Appl. Math.}, 61(5):1473--1501, 2001.

\bibitem{Mol17}
M.~Molitor.
\newblock Remarks on the space of volume preserving embeddings.
\newblock {\em Differential Geom. Appl.}, 52:127--141, 2017.

\bibitem{Oel14}
D.~Oelz.
\newblock Convergence of the penalty method applied to a constrained curve
  straightening flow.
\newblock {\em Commun. Math. Sci.}, 12(4):601--621, 2014.

\bibitem{Oel11}
D.~B. \"Oelz.
\newblock On the curve straightening flow of inextensible, open, planar curves.
\newblock {\em SeMA J.}, (54):5--24, 2011.

\bibitem{O07}
S.~Okabe.
\newblock The motion of elastic planar closed curves under the area-preserving
  condition.
\newblock {\em Indiana Univ. Math. J.}, 56(4):1871--1912, 2007.

\bibitem{O08}
S.~Okabe.
\newblock The dynamics of elastic closed curves under uniform high pressure.
\newblock {\em Calc. Var. Partial Differential Equations}, 33(4):493--521,
  2008.

\bibitem{OS88}
S.~Osher and J.~A. Sethian.
\newblock Fronts propagating with curvature-dependent speed: algorithms based
  on {H}amilton-{J}acobi formulations.
\newblock {\em J. Comput. Phys.}, 79(1):12--49, 1988.

\bibitem{otto01}
F.~Otto.
\newblock The geometry of dissipative evolution equations: the porous medium
  equation.
\newblock {\em Comm. Partial Differential Equations}, 26(1-2):101--174, 2001.

\bibitem{Per03}
G.~Perelman.
\newblock Finite extinction time for the solutions to the {R}icci flow on
  certain three-manifolds.
\newblock {\em arXiv preprint math/0307245}, 2003.

\bibitem{Preston-2011}
S.~C. Preston.
\newblock The motion of whips and chains.
\newblock {\em Journal of Differential Equations}, 251:504--550, 2011.

\bibitem{Preston-2012}
S.~C. Preston.
\newblock The geometry of whips.
\newblock {\em Annals of Global Analysis and Geometry}, 41:281--305, 2012.

\bibitem{San17}
F.~Santambrogio.
\newblock \{{E}uclidean, metric, and {W}asserstein\} gradient flows: an
  overview.
\newblock {\em Bull. Math. Sci.}, 7(1):87--154, 2017.

\bibitem{SV17}
Y.~Seng\"ul and D.~Vorotnikov.
\newblock Generalized solutions for inextensible string equations.
\newblock {\em J. Differential Equations}, 262(6):3610--3641, 2017.

\bibitem{ShV17}
W.~{Shi} and D.~{Vorotnikov}.
\newblock {The gradient flow of the potential energy on the space of arcs}.
\newblock {\em ArXiv e-prints}, Feb. 2017.

\bibitem{Sm12}
K.~Smoczyk.
\newblock Mean curvature flow in higher codimension: introduction and survey.
\newblock In {\em Global differential geometry}, volume~17 of {\em Springer
  Proc. Math.}, pages 231--274. Springer, Heidelberg, 2012.

\bibitem{SC09}
V.~Srikrishnan and S.~Chaudhuri.
\newblock Stabilization of parametric active contours using a tangential
  redistribution term.
\newblock {\em IEEE Trans. Image Process.}, 18(8):1859--1872, 2009.

\bibitem{villani03topics}
C.~Villani.
\newblock {\em Topics in optimal transportation}.
\newblock American Mathematical Soc., 2003.

\bibitem{villani08oldnew}
C.~Villani.
\newblock {\em Optimal transport: old and new}.
\newblock Springer Science \& Business Media, 2008.

\end{thebibliography}
\end{document}